\theoremstyle{plain} 
\newtheorem{thm}{Theorem}[section] 
\newtheorem{theorem}[thm]{Theorem} 
\newtheorem{corollary}[thm]{Corollary} 
\newtheorem{lemma}[thm]{Lemma} 
\newtheorem{prop}[thm]{Proposition} 
\newtheorem{proposition}[thm]{Proposition} 
\theoremstyle{remark} 
\newtheorem{example}[thm]{Example} 
\newtheorem{remark}[thm]{Remark}
\theoremstyle{definition}
\def\al{{\alpha}}
\def\om{{\omega}}
\def\la{{\lambda}}
\let\La\Lambda
\def\si{{\sigma}}
\def\ga{{\gamma}}
\def\epsilon{{\varepsilon}}
\def\ep{{\varepsilon}}
\def\th{{\theta}}
\def\phi{{\varphi}}
\DeclareMathAlphabet{\doba}{U}{msb}{m}{n} 
\gdef\mH{\doba{H}}
\gdef\mR{\doba{R}}
\gdef\mS{\doba{S}}
\gdef\mZ{\doba{Z}}
\gdef\mM{\doba{M}}
\def\cF{\mathcal{F}}
\def\cS{\mathcal{S}}
\def\SO{{\mathop{\rm SO}}}
\def\SU{{\mathop{\rm SU}}}
\def\vol{{\mathop{\rm vol}}}
\def\Ric{{\mathop{\rm Ric}}}
\def\Isom{{\mathop{\rm Isom}}}
\def\Stab{{\mathop{\rm Stab}}}
\def\Scal{{s}}
\let\scal\Scal
\def\Id{{\mathop{\rm Id}}}
\let\ti\tilde
\let\wihat\widehat
\newcommand{\definedas}{\mathrel{\raise.095ex\hbox{\rm :}\mkern-5.2mu=}}
\newcounter{mnotecount}[section]
\def \bi{b}
\def \ci{c}
\def\sh#1{\mathop{\rm sh}\nolimits_{#1}}
\begin{document}
%%%%%%%%%%%%%%%%%%%%%%%%%%%%%%%%%%%%%%%%%%%%%%%%%%%%%%%%%%%%%%%%%%%%%%%%%

%%%%%%%%%%%%%%%%%%%%%%%%%%%%%%%%%%%%%%%%%%%%%%%%%%%%%%%%%%%%%%%%%%%%%%%%%
%\begin{center}
%\framebox{\framebox{
%\vbox{This is project {\red \Project}\\
%Current version {\blue\Version}, from
%{\blue\Datum}, most recent changes by {\blue\Person}.}
%}}
%\end{center}
%%%%%%%%%%%%%%%%%%%%%%%%%%%%%%%%%%%%%%%%%%%%%%%%%%%%%%%%%%%%%%%%%%%%%%%%%

\title{Low-dimensional surgery and the Yamabe invariant} 
 
\author{Bernd Ammann} 
\address{Fakult\"at f\"ur Mathematik \\ 
Universit\"at Regensburg \\
93040 Regensburg \\ 
Germany}
\email{bernd.ammann@mathematik.uni-regensburg.de}
%\email{bernd.ammann@mathematik.uni-regensburg.de}

\author{Mattias Dahl} 
\address{Institutionen f\"or Matematik \\
Kungliga Tekniska H\"ogskolan \\
100 44 Stockholm \\
Sweden}
\email{dahl@math.kth.se}
%\email{dahl@math.kth.se}

\author{Emmanuel Humbert} 
\address{Laboratoire de Math\'ematiques et Physique Th\'eorique \\ 
Universit\'e de Tours \\
Parc de Grandmont \\
37200 Tours - France \\}
\email{Emmanuel.Humbert@lmpt.univ-tours.fr}
%\email{Emmanuel.Humbert@lmpt.univ-tours.fr}

\begin{abstract}
Assume that $M$ is a compact $n$-dimensional manifold and that $N$ is 
obtained by surgery along a $k$-dimensional sphere, $k\leq n-3$. The 
smooth Yamabe invariants $\sigma(M)$ and $\sigma(N)$ satisfy 
$\sigma(N)\geq {\rm min} (\sigma(M),\Lambda)$ for a constant 
$\Lambda>0$ depending only on $n$ and $k$. We derive explicit lower 
bounds for $\Lambda$ in dimensions where previous methods failed, namely 
for $(n,k) \in \{(4,1),(5,1),(5,2), (6,3), (9,1),(10,1)\}$. With methods 
from surgery theory and bordism theory several gap phenomena for smooth 
Yamabe invariants can be deduced.
\end{abstract}

\subjclass[2010]{35J60 (Primary), 35P30, 57R65, 58J50, 58C40 (Secondary)}
% 
% 35J60 Nonlinear PDE of elliptic type
% 35P30 Nonlinear eigenvalue problems, nonlinear spectral theory for PDO
% 57R65 Surgery and handlebodies
% 58J50 Spectral problems; spectral geometry; scattering theory
% 58C40 Spectral theory; eigenvalue problems
%
% Old codim2 {53C27 (Primary) 55N22, 57R65 (Secondary)}
% Old mu2-MSC%%%% 53A30, 35J60(Primary) 35P30, 58J50, 58C40 (Secondary) 

\keywords{Yamabe invariant, surgery, symmetrization} 

\date{\today}

\maketitle
\tableofcontents

%%%%%%%%%%%%%%%%%%%%%%%%%%%%%%%%%%%%%%%%%%%%%%%%%%%%%%%%%%%%%%%%%%%%%%%%%
\section{Introduction and Results}
%%%%%%%%%%%%%%%%%%%%%%%%%%%%%%%%%%%%%%%%%%%%%%%%%%%%%%%%%%%%%%%%%%%%%%%%%

Let $(M,g)$ be a Riemannian manifold of dimension $n \geq 3$. Its 
scalar curvature will be denoted by $\Scal^g$. We define the Yamabe 
functional by 
\[
\cF^g (u) 
\definedas 
\frac{
\int_M \left( a_n |du|_g^2 + \Scal^g u^2 \right) \, dv^g}
{\left( \int_M |u|^{p_n} \, dv^g \right)^{\frac{2}{p_n}}} ,
\]
where $u \in C^\infty_c(M)$ does not vanish identically, and where 
$a_n \definedas \frac{4(n-1)}{n-2}$ and $p_n \definedas \frac{2n}{n-2}$. 
The {\it conformal Yamabe constant} $\mu(M,g)$ of $(M,g)$ is then defined 
by
\[
\mu(M,g) \definedas
\inf_{u \in C_c^{\infty}(M), u \not\equiv 0} \cF^g(u).
\]
This functional played a crucial role in the solution of the 
Yamabe problem which consists in finding a metric of constant scalar 
curvature in a given conformal class. For a compact manifold $M$ 
the {\em Yamabe invariant}
is defined by 
\[
\sigma (M) \definedas \sup \mu(M,g),
\]
where the supremum runs over all the metrics on $M$, or equivalently over 
all conformal classes on $M$. In order to stress that the Yamabe invariant 
only depends 
on the differentiable 
structure of $M$, it is often called the ``smooth Yamabe invariant of $M$''. 
One motivation for studying such an invariant is
given by the following well-known result 
\begin{prop} 
A compact differentiable manifold of dimension $n \geq 3$ admits a metric 
with positive scalar curvature if and only if $\si(M)>0$. 
\end{prop}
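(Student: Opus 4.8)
The plan is to prove the two implications separately, reducing both to the following purely conformal fact: for a fixed compact Riemannian manifold $(M,g)$ one has $\mu(M,g)>0$ if and only if the conformal class $[g]$ contains a metric of positive scalar curvature. Granting this, the Proposition follows at once: since $\cF^{f^{p_n-2}g}(u)=\cF^g(fu)$ for every positive $f$, the number $\mu(M,g)$ depends only on $[g]$, and $\sigma(M)=\sup_g\mu(M,g)>0$ holds precisely when $\mu(M,g)>0$ for some metric $g$; combined with the conformal fact this is equivalent to $M$ carrying \emph{some} metric of positive scalar curvature.

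For the easy direction of the conformal fact, suppose $\tilde g\in[g]$ has $\Scal^{\tilde g}>0$. Since $M$ is compact and $\Scal^{\tilde g}$ is continuous, $\Scal^{\tilde g}\geq c>0$ for a constant $c$, so for every nonzero $u\in C^\infty_c(M)$
\[
\int_M\left(a_n|du|_{\tilde g}^2+\Scal^{\tilde g}u^2\right)dv^{\tilde g}\;\geq\;\min(a_n,c)\,\|u\|_{H^1(M,\tilde g)}^2 .
\]
The Sobolev embedding $H^1(M,\tilde g)\hookrightarrow L^{p_n}(M,\tilde g)$ — available because $M$ is compact and $p_n=\tfrac{2n}{n-2}$ is the critical exponent — gives $\|u\|_{L^{p_n}}\leq C_S\|u\|_{H^1}$, whence $\cF^{\tilde g}(u)\geq\min(a_n,c)\,C_S^{-2}>0$; taking the infimum yields $\mu(M,g)=\mu(M,\tilde g)>0$.

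For the converse, assume $\mu(M,g)>0$ and work with the conformal Laplacian $L_g\definedas a_n\Delta_g+\Scal^g$, a self-adjoint elliptic operator on $M$. I would first recall that its lowest eigenvalue $\lambda_1$ admits a positive smooth eigenfunction $\phi_1$ (the $L^2$-minimizer of the associated Rayleigh quotient may be taken $\geq 0$, and is then $>0$ by the strong maximum principle). Testing $\cF^g$ against $\phi_1$ gives $\cF^g(\phi_1)=\lambda_1\|\phi_1\|_{L^2}^2\big/\|\phi_1\|_{L^{p_n}}^2$, so $\mu(M,g)>0$ forces $\lambda_1>0$. Setting $\tilde g\definedas\phi_1^{p_n-2}g$ and using the transformation rule $L_g\phi_1=\Scal^{\tilde g}\phi_1^{p_n-1}$ together with $L_g\phi_1=\lambda_1\phi_1$ produces $\Scal^{\tilde g}=\lambda_1\phi_1^{2-p_n}>0$, so $[g]$ contains a metric of positive scalar curvature. (Alternatively this step can be read off from the solution of the Yamabe problem, the Yamabe metric in $[g]$ having constant scalar curvature of the sign of $\mu(M,g)$.)

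The one point requiring genuine care is the positivity of the ground state of $L_g$: the potential $\Scal^g$ may change sign, so one cannot simply quote positivity of the first eigenfunction of a Schrödinger operator with nonnegative potential, but must argue via a Perron--Frobenius / strong maximum principle argument for $L_g$ (or for the semigroup $e^{-tL_g}$). The remaining ingredients — the critical Sobolev embedding on a compact manifold, conformal invariance of $\mu$, and the transformation law for the conformal Laplacian — are standard, and the argument is essentially bookkeeping once these are in place.
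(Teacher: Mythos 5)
Your proof is correct, and in fact the paper offers no proof of this Proposition at all: it is stated as a well-known result, so there is no in-paper argument to compare against. Your route is the standard one — conformal invariance of $\mu(M,g)$, the critical Sobolev embedding for the easy direction, and for the converse the identification of the sign of $\mu(M,g)$ with the sign of the first eigenvalue $\lambda_1$ of the conformal Laplacian $L_g$, followed by the conformal change $\tilde g=\phi_1^{p_n-2}g$ with $\Scal^{\tilde g}=\lambda_1\phi_1^{2-p_n}>0$ — and the one point you flag, positivity of the ground state despite a sign-changing potential, is indeed handled by the usual replacement of a minimizer $u$ by $|u|$ together with the strong maximum principle, so the argument is complete.
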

Note that all manifolds in this article are manifolds \emph{without boundary}.

We recall that classification of all compact manifolds of dimension $n \geq 3$ 
admitting a positive scalar curvature metric is a challenging open problem 
solved only in dimension $3$ by using Hamilton's Ricci flow and 
Perelman's methods. This is one reason why 
much work has been devoted to the study of $\si(M)$. 

One of the first goals should be to compute $\si(M)$ explicitly for 
some standard manifolds~$M$. This is unfortunately a problem out of range 
even for what could be considered the simplest examples. For example, the 
value of the Yamabe invariant is not known for quotients of spheres except 
for $\mR P^3$ (and the spheres themselves), for products of spheres of 
dimension at least $2$ and for hyperbolic spaces of dimension at least $4$.

One also could ask for general bounds for $\sigma(M)$. The fundamental 
one is due to Aubin,
\[
\sigma(M) \leq \sigma(S^n) = \mu(\mS^n) = n(n-1) \om_n^{2/n}.
\]
Here $\mS^n$ is the standard sphere in $\mR^{n+1}$, and its volume is 
denoted by $\om_n$.

Unfortunately, in dimension $n \geq 5$, not much more is known. Even the basic 
question whether there exists a compact manifold $M$ of dimension $n \geq 5$ 
satisfying $\sigma(M) \neq 0$ and $\sigma(M)\neq\sigma(S^n)$ is still open.
It would also be interesting to see whether the set
\[
\cS_n(0) \definedas 
\{\si(M) \mid \mbox{$M$ is a compact connected manifold of dimension $n$}\}
\] 
is finite or countably infinite, and whether $\cS_n(0)$ is dense in 
$(-\infty,\si(S^n)]$. Much more is known now about 
\[
\cS_n(i) \definedas 
\{\si(M) \mid \mbox{$M$ is a compact $i$-connected manifold of dimension $n$}\}
\] 
for $i\geq 1$, as we will see below.  

A useful tool for understanding the Yamabe invariant is to study its 
change under surgery type modifications of the manifold. The main results 
obtained this way are the following.
\begin{itemize} 
\item 
In 1979, Gromov-Lawson and Schoen-Yau independently proved that the
positivity of $\sigma(M)$ is preserved under surgery of dimension
$k \leq n-3$. One important corollary is that any compact simply connected
non-spin manifold of dimension $n \geq 5$ admits a positive scalar
curvature metric. Together with results by Stephan Stolz (1992) this implies  
$\cS_n(1)\subset (0,\si(S^n)]$ for $n\equiv 3,5,6,7$ modulo $8$, $n\geq 5$.
\item 
In 1987, Kobayashi proved that $0$-dimensional surgeries do not 
decrease $\sigma(M)$.
\item 
In 2000, Petean and Yun proved that if $N$ is obtained by a 
$k$-dimensional surgery ($k \leq n-3$) from $M$ then 
$\sigma(N) \geq \min(0, \sigma(M))$. This implies in particular
that if $M$ is simply connected and has dimension $n \geq 5$ then
$\sigma(M) \geq 0$. In other words $\cS_n(1)\subset [0,\si(S^n)]$ 
for all $n\geq 5$.
\end{itemize}
In \cite{ammann.dahl.humbert:13b} we proved a generalization of these 
three results.
\begin{theorem}[\cite{ammann.dahl.humbert:13b}, Corollary~1.4] 
\label{thm_surg}
If $N$ is obtained from a compact $n$-dimensional manifold~$M$ by 
a $k$-dimensional surgery, $k \leq n-3$, 
then 
\[
\sigma(N) \geq \min(\Lambda_{n,k}, \sigma(M))
\]
where $\Lambda_{n,k} > 0 $ depends only on $n$ and $k$. In addition, 
$\Lambda_{n,0} = \sigma(S^n)$.
\end{theorem}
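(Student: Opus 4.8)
The plan is to prove the theorem at the level of conformal Yamabe constants. Namely, I would define the constant $\Lambda_{n,k}$ as the conformal Yamabe constant of an explicit non-compact model space — so that its positivity, $\Lambda_{n,k}>0$, is itself part of what must be shown — and then prove: for every metric $g$ on $M$ and every $\ep>0$ there is a metric $g_\ep$ on $N$ with $\mu(N,g_\ep)\ge\min(\Lambda_{n,k},\mu(M,g))-\ep$. Since $t\mapsto\min(\Lambda_{n,k},t)$ is nondecreasing and continuous, taking the supremum over $g$ and then letting $\ep\to0$ yields $\sigma(N)\ge\min(\Lambda_{n,k},\sigma(M))$.

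First I would normalize the geometry near the surgery sphere. Let $f\colon S^k\hookrightarrow M$ be the embedding along which surgery is performed; by hypothesis its normal bundle is trivialized, so a tubular neighbourhood is identified with $S^k\times B^{n-k}(r_{\max})$, and $N$ is obtained by cutting this out and gluing in the corresponding part of $D^{k+1}\times S^{n-k-1}$ along $S^k\times S^{n-k-1}$. Since $\mu$ is conformally invariant, I replace $g$ by a conformal metric which near $f(S^k)$ is, in Fermi-type coordinates, a Riemannian product of a metric on $S^k$ with the flat disk, up to an error controllably small as $r_{\max}\to0$. Writing the disk as $dr^2+r^2 g^{S^{n-k-1}}$ and setting $t=-\log r$ exhibits this product, for $r$ small, as conformal to $g^{S^k}\oplus(dt^2+g^{S^{n-k-1}})$. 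I then build a one-parameter family of metrics $g_\ep$ on $N$ equal to $g$ outside the tube, equal to a fixed positive-scalar-curvature ``handle'' metric near the core $\{0\}\times S^{n-k-1}$ of $D^{k+1}\times S^{n-k-1}$, and interpolating through rescaled product metrics in between. The rescaled limits of the collapsing tube, as $\ep\to0$, form a family of product spaces modelled on $\mR^{k+1}\times S^{n-k-1}$ (a flat metric times a round metric), and $\Lambda_{n,k}$ is defined as the infimum of the conformal Yamabe constants of the members of this model family.

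The heart of the proof is the semicontinuity estimate $\liminf_{\ep\to0}\mu(N,g_\ep)\ge\min(\Lambda_{n,k},\mu(M,g))$, which I would prove by contradiction and blow-up. If it failed, choose $\ep_i\to0$ and almost-minimizers $u_i\in C^\infty_c(N)$ of $\cF^{g_{\ep_i}}$ with $\|u_i\|_{L^{p_n}(g_{\ep_i})}=1$ and $\cF^{g_{\ep_i}}(u_i)\to c$ for some $c<\min(\Lambda_{n,k},\mu(M,g))\ (\le\sigma(S^n))$. A concentration--compactness / profile decomposition of $(u_i)$ on the collapsing manifolds $(N,g_{\ep_i})$ shows that the $L^{p_n}$-mass $1$ splits in the limit among finitely many complete limit spaces $W_1,\dots,W_r$, each of which is either a round sphere bubble (with $\mu=\sigma(S^n)$), or $(M\setminus f(S^k),g)$ with $\mu=\mu(M,g)$ (because $f(S^k)$ has codimension $\ge2$, hence vanishing $H^1$-capacity, so removing it does not change $\mu$), or one of the product model spaces (with $\mu\ge\Lambda_{n,k}$); in every case $\mu(W_j)\ge\min(\Lambda_{n,k},\mu(M,g))$. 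With $m_j\ge0$ the mass captured by $W_j$, lower semicontinuity of the energy on each piece and subadditivity of $s\mapsto s^{2/p_n}$ (valid since $2/p_n=(n-2)/n\le1$) give
\[
c\ \ge\ \sum_j\mu(W_j)\,m_j^{2/p_n}\ \ge\ \big(\min_j\mu(W_j)\big)\Big(\sum_j m_j\Big)^{2/p_n}\ =\ \min_j\mu(W_j)\ \ge\ \min\big(\Lambda_{n,k},\mu(M,g)\big),
\]
contradicting $c<\min(\Lambda_{n,k},\mu(M,g))$. I expect this concentration--compactness step on a \emph{degenerating} family of metrics to be the main obstacle: one must control the geometry of $g_{\ep_i}$ uniformly along the long thin neck, classify the admissible bubbling scales and identify each blow-up limit as a solution of the Yamabe equation on one of the listed model spaces with no $L^{p_n}$-mass lost at intermediate scales, and carry out the removable-singularity estimate near $f(S^k)$. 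This is precisely where the hypothesis $k\le n-3$ is used — both to make $f(S^k)$ negligible for the $H^1$-energy and, crucially, to make the model $\mR^{k+1}\times S^{n-k-1}$ carry \emph{positive} scalar curvature $(n-k-1)(n-k-2)$.

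It remains to show $\Lambda_{n,k}>0$. For $k\le n-3$ the model spaces have positive constant scalar curvature, so each has positive conformal Yamabe constant by a Sobolev/H\"older inequality, and uniformity over the (effectively compact) parameter range of the family gives $\Lambda_{n,k}>0$. When $k=0$ the model family reduces to the single space $\mR\times S^{n-1}$ with the product metric, which is conformal to $\mS^n$ with two points removed; a removable-singularity argument (again using codimension $\ge2$) identifies its conformal Yamabe constant with $\mu(\mS^n)=\sigma(S^n)=n(n-1)\om_n^{2/n}$, so that $\Lambda_{n,0}=\sigma(S^n)$. The explicit quantitative lower bounds for $\Lambda_{n,k}$ in the exceptional pairs $(n,k)$ are a separate and more computational matter, and constitute the remainder of the paper.
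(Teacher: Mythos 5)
Your overall architecture (conformal normalization of $g$ near the surgery sphere, a family of metrics $g_\ep$ on $N$ interpolating between $g$ and a handle metric, and a limiting/concentration argument giving $\liminf_{\ep\to 0}\mu(N,g_\ep)\geq\min(\Lambda_{n,k},\mu(M,g))$) is indeed the strategy of the cited proof, but there is a genuine gap in what you take the model family, and hence $\Lambda_{n,k}$, to be. The blow-up limits produced by concentration along the degenerating neck are not exhausted by the flat product $\mR^{k+1}\times\mS^{n-k-1}$: rescalings at intermediate scales produce the spaces $\mM_\ci=\mH_\ci^{k+1}\times\mS^{n-k-1}$ (hyperbolic space of curvature $-\ci^2$ times the round sphere) for all $\ci\in[0,1]$, and the constant in the theorem is an infimum over this whole family, cf.\ Section~\ref{sec.notation} where $\La_{n,k}=\min\{\La^{(1)}_{n,k},\La^{(2)}_{n,k}\}$ with $\La^{(i)}_{n,k}=\inf_{\ci\in[0,1]}\mu^{(i)}(\mM_\ci)$. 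With your definition $\Lambda_{n,k}$ would simply be $\mu(\mR^{k+1}\times\mS^{n-k-1})$, the semicontinuity estimate could not be closed (the intermediate-scale limits fall outside your list $W_1,\dots,W_r$), and the entire content of the present paper --- bounding $\mu(\mM_\ci)$ uniformly in $\ci$ via Propositions~\ref{proprad} and~\ref{propsig} --- would be superfluous.

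Two further steps fail as written. First, your positivity argument (``the model spaces have positive scalar curvature, hence positive Yamabe constant'') breaks down for the true family: for $v=k+1>w=n-k-1$ one has $s_\ci=w(w-1)-\ci^2v(v-1)<0$ for $\ci$ near $1$ (e.g.\ $(n,k)=(6,3)$, where $s_1=-10$), so positivity of $\inf_{\ci}\mu(\mM_\ci)$ needs the comparison arguments of Section~\ref{section_comparing} or, at $\ci=1$, the conformal identification of $\mM_1$ with $\mS^n\setminus\mS^k$ as in Appendix~\ref{nk=63}; it is not a direct Sobolev/H\"older consequence. Second, in your profile decomposition you assume each limit piece $W_j$ contributes $\mu(W_j)m_j^{2/p_n}$, i.e.\ that the limiting solution on $W_j$ is an admissible finite-energy test function and that no mass escapes between scales. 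This is precisely the delicate point that forces the cited proof to work with the solution-based quantities $\mu^{(1)}$ and $\mu^{(2)}$ (the $L^2$ condition, respectively the lower bound on $\mu(u)\|u\|_{L^\infty}^{p_n-2}$, of Section~\ref{sec.notation}) rather than with conformal Yamabe constants of the limit spaces, and to invoke the separate square-integrability results of \cite{ammann.dahl.humbert:p11b} to pass from $\La^{(2)}$ back to $\La^{(1)}$. Without these ingredients the inequality $c\geq\min_j\mu(W_j)$ in your chain is not justified. The remaining elements of your sketch (codimension $\geq 2$ removability of $f(S^k)$, superadditivity of $s\mapsto s^{2/p_n}$, and $\Lambda_{n,0}=\sigma(S^n)$ via $\mR\times\mS^{n-1}$ being conformal to a twice-punctured sphere, which is consistent with $\mH^1_\ci=\mR$ for all $\ci$) are sound.
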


As a corollary we see that $0$ is not an accumulation point of $\cS_n(1)$, 
$n\geq 5$, in other words we find that for any simply connected compact 
manifold~$M$ of dimension $n \geq 5$ 
\begin{itemize}
\item 
$\sigma(M) = 0$ if $M$ is spin and if its index in $KO_n$ does not vanish,
\item 
$\sigma(M) \geq \al_n$, otherwise, where $\al_n >0$ depends only on $n$. 
\end{itemize}
Many other consequences can be deduced, see 
\cite[Section~1.4]{ammann.dahl.humbert:13b}, but one could find 
these results unsatisfactory, since the constant $\Lambda_{n,k}$ were not 
computed in \cite{ammann.dahl.humbert:13b} unless for $k=0$. This effect 
was then reflected in the applications. For example, no explicit positive 
lower bound for the constant $\al_n$ above was known. The results 
in~\cite{ammann.dahl.humbert:13} and~\cite{ammann.dahl.humbert:p11b} 
yield explicit positive lower bounds for $\Lambda_{n,k}$ in the cases 
$2\leq k\leq n-4$. In order to apply standard surgery techniques, it would 
be helpful to have lower bounds in the cases $k=1$ and $k=n-3$.

The method established in the present article yields explicit positive 
lower bounds for all cases $k=1\leq n-4$ and in the cases $(n,k)=(6,3)$, 
$(n,k)=(5,2)$ and $(n,k)=(4,1)$. However it requires as input data a 
lower bound on the conformal Yamabe constant $\mu(\mR^{k+1}\times \mS^{n-k-1})$. 
Such input data is provided in \cite{petean.ruiz:11} and 
\cite{petean.ruiz:13} in the cases 
$(n,k) \in \{(4,1),(5,1),(5,2),(9,1),(10,1)\}$. Unfortunately their 
method has to be strongly modified for each pair of dimensions, and as 
a courtesy to us, Petean and Ruiz provided the above cases, as these are 
the ones which will lead to interesting applications in 
Section~\ref{section_top_appl}.

We obtain in 
%Section~\ref{section_top_appl}, 
Corollary~\ref{cor.dim.5} that $\cS_5(1)\subset (45.1,\si(S^5)]$, in other 
words: any compact simply connected manifold of dimension $5$ satisfies 
\[
45.1 < \si(M) \leq \mu(\mS^5) < 79.
\]
In the same way, Corollary~\ref{cor.dim.6} says that 
$\cS_6(1) \subset (49.9,\si(S^6)]$.

In dimensions $n\geq 7$ an unsolved problem persists for surgeries of 
codimension $3$, i.e.\ for $n=k-3$, 
see \cite{ammann.dahl.humbert:p11b} for details about this problem.

This problem can be avoided by restricting to $2$-connected manifolds. 
Together with results from  \cite{ammann.dahl.humbert:p11b} we obtain 
an explicit positive number $t_n$ such that any compact $n$-dimensional 
$2$-connected manifold $M$ with vanishing index, $n\neq 4$, satisfies 
$\si(M)\geq t_n$, see Table~\ref{fig.tn} and 
Proposition~\ref{prop.2c.lowbound}. We thus see 
$\cS_n(2) \subset \{0\} \cup [t_n,\si(S^n)]$ for all $n\neq 4$.

%%%%%%%%%%%%%%%%%%%%%%%%%%%%%%%%%%%%%%%%%%%%%%%%%%%%%%%%%%%%%%%%%%%%%%%%%
\subsection*{Acknowledgments}
%%%%%%%%%%%%%%%%%%%%%%%%%%%%%%%%%%%%%%%%%%%%%%%%%%%%%%%%%%%%%%%%%%%%%%%%%

We thank Jimmy Petean, Miguel Ruiz, and Tobias Weth for helpful comments. 
Much work on this article was done during a visit of Bernd Ammann and 
Mattias Dahl to the Max Planck Institute for Gravitational Physics 
(Albert Einstein Institute), Golm. We thank the institute for its 
hospitality. Emmanuel Humbert was partially supported by ANR-10-BLAN 0105.

%%%%%%%%%%%%%%%%%%%%%%%%%%%%%%%%%%%%%%%%%%%%%%%%%%%%%%%%%%%%%%%%%%%%%%%%%
\section{Preliminaries}
%%%%%%%%%%%%%%%%%%%%%%%%%%%%%%%%%%%%%%%%%%%%%%%%%%%%%%%%%%%%%%%%%%%%%%%%%

%%%%%%%%%%%%%%%%%%%%%%%%%%%%%%%%%%%%%%%%%%%%%%%%%%%%%%%%%%%%%%%%%%%%%%%%%
\subsection{Notation and model spaces} 
\label{sec.notation}
%%%%%%%%%%%%%%%%%%%%%%%%%%%%%%%%%%%%%%%%%%%%%%%%%%%%%%%%%%%%%%%%%%%%%%%%%
 
We denote the standard flat metric on $\mR^v$ by $\xi^v$. On the sphere 
$S^w \subset \mR^{w+1}$ the standard round metric is denoted by $\rho^w$.
The volume of $(S^w,\rho^w)$ is 
\[
\omega_w 
= \frac{2 \pi^{(w+1)/2}} {\Gamma\left(\frac{w+1}2\right)}.
\]

Let $\mH_\ci^v$ be the $v$-dimensional complete 1-connected Riemannian 
manifold with sectional curvature $-\ci^2$. The Riemannian metric on 
$\mH_\ci^v$ is denoted by $\eta_\ci^v$. We fix a point~$x_0$ in $
\mH_\ci^v$.

Next, we define the model spaces $\mM_\ci$ through 
$\mM_\ci \definedas \mH_\ci^v \times \mS^w$, which has the Riemannian 
metric $G_\ci \definedas \eta_\ci^v + \rho^w$. Note that in our previous 
articles~\cite{ammann.dahl.humbert:13b,ammann.dahl.humbert:13} 
we used the notation $\mM_\ci^{v+w, v-1}$ for $\mM_\ci$. 
Set $n \definedas v+w$.

%The main difficulty we encounter when trying to make the surgery 
%result in Theorem~\ref{thm_surg} more explicit is the complexity of the 
%definition of $\Lambda_{n,k}$. We will recall this definition now. 

Let $(N,h)$ be a Riemannian manifold of dimension $n$. Let $\Delta^h$ 
denote the non-negative Laplacian on $(N,h)$. For $i=1,2$ we let 
$\Omega^{(i)}(N,h)$ be the set of non-negative $C^2$ functions $u$
solving the Yamabe equation 
\begin{equation} \label{eq.conf} 
a_n \Delta^h u + \Scal^h u = \mu u^{p_n-1}
\end{equation}
for some $\mu = \mu(u) \in \mR$ and satisfying
\begin{itemize}
\item $u \not \equiv 0$,
\item $\|u\|_{L^{p_n}(N)} \leq 1$,
\item $u \in L^{\infty}(N)$,
\end{itemize}
and 
\begin{itemize}
\item $u \in L^2(N)$, for $i=1$, 
\end{itemize}
or
\begin{itemize}
\item $\mu(u) \|u\|^{p_n-2}_{L^{\infty}(N)} \geq
\frac{(n-k-2)^2(n-1)}{8(n-2)}$, for $i=2$. 
\end{itemize}
For $i=1,2$ we set
\[
\mu^{(i)} (N,h) \definedas \inf_{u \in \Omega^{(i)}(N,h)} \mu(u).
\]
In particular, if $\Omega^{(i)}(N,h)$ is empty then
$\mu^{(i)}(N,h)=\infty$.

Finally, the constants in the surgery theorem are defined as follows.
For integers $n \geq 3$ and $0 \leq k \leq n-3$ set 
\[
\La^{(i)}_{n,k}
\definedas 
\inf_{c \in [0,1]} 
\mu^{(i)} (\mM_\ci ) 
\]
and
\[
\La_{n,k} 
\definedas
\min \left\{ \La^{(1)}_{n,k},\La^{(2)}_{n,k}\right\}.
\]
where $v = k+1$ and $w = n-k-1$.

%%%%%%%%%%%%%%%%%%%%%%%%%%%%%%%%%%%%%%%%%%%%%%%%%%%%%%%%%%%%%%%%%%%%%%%%%
\subsection{Strategy of proof} 
\label{sec.strategy}
%%%%%%%%%%%%%%%%%%%%%%%%%%%%%%%%%%%%%%%%%%%%%%%%%%%%%%%%%%%%%%%%%%%%%%%%%

The strategy we have used to find lower bounds of $\Lambda_{n,k}$ is 
the following.
\begin{itemize}
\item 
First prove that $\La^{(2)}_{n,k} \geq \La^{(1)}_{n,k}$. This was the main 
result in \cite{ammann.dahl.humbert:p11b} which holds in the cases 
$k \leq n-4$ and $n=k+3\in \{4,5\}$. For $n=6$, $k=3$, the results in 
\cite{ammann.dahl.humbert:p11b} do not apply directly and just allow to 
prove that 
\[ 
\inf_{c \in [0,1)} 
\mu^{(2)} (\mM_\ci ) \geq   \La^{(1)}_{6,3}.
\]
The case $c=1$ is treated separately: we exploit the fact that $\mM_1$ is 
conformally equivalent to the standard sphere $\mS^6 \setminus \mS^3$ with 
a totally geodesic $3$-sphere removed to show that 
$\mu^{(2)} (\mM_1) \geq \mu(\mS^6) \geq  \La^{(1)}_{6,3}$. We obtain again 
that $\La^{(2)}_{6,3} \geq \La^{(1)}_{6,3}$ (see Appendix \ref{nk=63}).
It remains open whether the same holds for  $n=k+3\geq 7$.
\item
Find lower bounds for $\La^{(1)}_{n,k}$. For this purpose, we show that
$\mu^{(1)} (\mM_\ci )$ can be estimated by the conformal Yamabe constant
of the non-compact manifold~$\mM_\ci$, see Section~\ref{conf_yam}.
We are reduced to find a lower bound for conformal Yamabe constant 
of the product manifold $\mM_\ci$. As mentioned before, there exists
results in this direction; our paper \cite{ammann.dahl.humbert:13} 
gives such a bound if $v \geq 3$ and $w \geq 3$. Also, the work of 
Petean and Ruiz apply if $w = 1$. 
In this paper, we develop a method which completes the remaining 
cases.
\end{itemize}

The technical aspects of the argument in the present paper involve 
symmetrization and stretching maps to relate the the conformal Yamabe
constants of $\mM_\ci$ for different values of $\ci$. This is done in
Section~\ref{section_comparing}.

\begin{remark}
Our methods also apply to find explicit lower bounds for the conformal 
Yamabe constant of $\mH_\ci^v \times (W,h)$, where $(W,h)$ is any compact 
Riemannian manifold, i.e.\ if we replace the round sphere by $(W,h)$. 
The case $(W,h)=\mS^w$ is the only case for which we see applications, 
so for simplicity of presentation we restricted to this case.
\end{remark}

%%%%%%%%%%%%%%%%%%%%%%%%%%%%%%%%%%%%%%%%%%%%%%%%%%%%%%%%%%%%%%%%%%%%%%%%%
\subsection{The generalized Yamabe functional of the model spaces} 
\label{conf_yam}
%%%%%%%%%%%%%%%%%%%%%%%%%%%%%%%%%%%%%%%%%%%%%%%%%%%%%%%%%%%%%%%%%%%%%%%%%

For $u \in C^\infty(\mM_\ci)$, $u \not\equiv 0$, we define the generalized 
Yamabe functional
\[
\cF_\ci^\bi(u)
\definedas
\frac{\int_{\mM_\ci} \left( a_n |du|^2 + \bi u^2 \right) \, dv}
{\|u\|^2_{L^{p_n}(\mM_\ci)}}.
\]
Clearly $\cF_\ci^\bi(u) \geq \cF_\ci^{\bi'}(u)$ if $\bi \geq \bi'$ and 
$\cF_\ci^\bi(u) \geq \frac{\bi}{\bi'} \cF_\ci^{\bi'}(u)$ if 
$0 < \bi \leq \bi'$.

The scalar curvature of $\mM_\ci$ is $s_\ci \definedas \Scal^{G_\ci} =
w(w-1) - \ci^2 v (v-1)$. The conformal Yamabe constant~$\mu_c$ of~$\mM_\ci$
satisfies
\[
\mu_\ci \definedas \mu(\mM_\ci) = \inf \cF_\ci^{s_\ci} (u),
\]
where the infimum is taken over all smooth functions $u$ of compact 
support which do not vanish identically.

If $u$ is a solution of \eqref{eq.conf} as in the definition of 
$\Omega^{(1)}(\mM_\ci)$, then $u$ is $L^2$ by assumption and thus also 
in the Sobolev space $H^{1,2}$. An integration by parts
$\int u \Delta u \, dv = \int |d u|^2 \, dv$ may then be performed in the 
integral defining $\cF_\ci^\bi(u)$, and we conclude that
\[
\mu^{(1)} (\mM_\ci) \geq \mu_\ci. 
\]

Using $\La^{(2)}_{n,k} \geq \La^{(1)}_{n,k}$ and the definition of~$\La^{(1)}_{n,k}$
this implies positive lower bounds for $\La_{n,k}$ for certain pairs $(n,k)$, 
see Table~\ref{fig.analytic}.

%%%%%%%%%%%%%%%%%%%%%%%%%%%%%%%%%%%%%%%%%%%%%%%%%%%%%%%%%%%%%%%%%%%%%%%%%
\subsection{Symmetrization}
%%%%%%%%%%%%%%%%%%%%%%%%%%%%%%%%%%%%%%%%%%%%%%%%%%%%%%%%%%%%%%%%%%%%%%%%%

The group $\Stab_{x_0}(\Isom(\mH_\ci^v))$ of isometries of $\mH_\ci^v$ 
fixing
$x_0\in \mH_\ci^v$ is isomorphic to $O(v)$ and we will fix such an isomorphism 
to identify  $\Stab_{x_0}(\Isom(\mH_\ci^v))$ with $O(v)$.
A function on 
$\mH_\ci^v$ is $O(v)$-invariant if and only if it depends only on the 
distance $d(\cdot, x_0)$ to the point~$x_0$. A function on $\mM_\ci$ is 
$O(v)$-invariant if and only if it depends only on $d(\cdot, x_0)$ and 
the point in $\mS^w$.

\begin{lemma} \label{lemma21}
For each $\ci \in [0,1]$
\[ 
\mu_\ci= \inf \cF_\ci^{s_\ci}(\ti{f}) 
\]
where the infimum is taken over all $O(v)$-invariant functions on 
$\mM_\ci$ which do not vanish identically. 
\end{lemma}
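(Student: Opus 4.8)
The plan is to establish the non-trivial inequality: for every compactly supported $u\not\equiv 0$ on $\mM_\ci$ there is an $O(v)$-invariant competitor $\ti f$ with $\cF_\ci^{s_\ci}(\ti f)\le\cF_\ci^{s_\ci}(u)$. Taking the infimum over $u$ then yields ``$\le$'' in Lemma~\ref{lemma21}, while ``$\ge$'' is immediate since $O(v)$-invariant functions form a subclass of all test functions. Throughout I would use that $s_\ci=w(w-1)-\ci^2v(v-1)$ is a \emph{constant} on $\mM_\ci$, so that
\[
\cF_\ci^{s_\ci}(u)=\frac{a_n\,\|du\|_{L^2(\mM_\ci)}^2+s_\ci\,\|u\|_{L^2(\mM_\ci)}^2}{\|u\|_{L^{p_n}(\mM_\ci)}^2},
\]
and that replacing $u$ by $|u|$ leaves $\|u\|_{L^2}$ and $\|u\|_{L^{p_n}}$ unchanged while not increasing $\|du\|_{L^2}$ (Kato inequality); so one may assume $u\ge 0$.

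The construction is symmetrization in the $\mH_\ci^v$-factor alone. For each $y\in\mS^w$ let $\ti f(\cdot,y)$ be the $O(v)$-symmetric decreasing rearrangement of $u(\cdot,y)$ on $\mH_\ci^v$ with respect to geodesic balls centred at $x_0$; a standard argument shows $\ti f$ is a well-defined measurable, $O(v)$-invariant function on $\mM_\ci$, and since $u$ has compact support, $\ti f$ is supported in a fixed compact subset of $\mM_\ci$ (the rearranged slices cannot reach further from $x_0$ than the supremum of the radii met by $\mathrm{supp}\,u$). Slice-wise equimeasurability together with Fubini gives $\|\ti f\|_{L^q(\mM_\ci)}=\|u\|_{L^q(\mM_\ci)}$ for every $q$, in particular for $q=2$ and $q=p_n$. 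Hence the denominator of $\cF_\ci^{s_\ci}$ and the $s_\ci$-term in the numerator are unchanged, and it remains to show $\|d\ti f\|_{L^2(\mM_\ci)}\le\|du\|_{L^2(\mM_\ci)}$. Splitting $|du|^2=|d^{\mH}u|^2+|d^{\mS}u|^2$ into the parts tangent to the two factors, this reduces to the two estimates $\int_{\mM_\ci}|d^{\mH}\ti f|^2\le\int_{\mM_\ci}|d^{\mH}u|^2$ and $\int_{\mM_\ci}|d^{\mS}\ti f|^2\le\int_{\mM_\ci}|d^{\mS}u|^2$.

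The first estimate is the Pólya--Szegő inequality applied on each slice: $\mH_\ci^v$ is rotationally symmetric about $x_0$ and geodesic balls around $x_0$ are isoperimetric, so the coarea formula together with the isoperimetric inequality gives $\int_{\mH_\ci^v}|d^{\mH}\ti f(\cdot,y)|^2\le\int_{\mH_\ci^v}|d^{\mH}u(\cdot,y)|^2$ for each fixed $y$, and one integrates over $y\in\mS^w$. For the second estimate I would invoke the non-expansivity of symmetric decreasing rearrangement in $L^2$: for nonnegative $f,g$ on $\mH_\ci^v$ one has $\|\ti f-\ti g\|_{L^2(\mH_\ci^v)}\le\|f-g\|_{L^2(\mH_\ci^v)}$, which reduces, via the measure-preserving map $x\mapsto\vol B(x_0,d(x,x_0))$ from $(\mH_\ci^v,dv)$ to $([0,\infty)$, Lebesgue$)$, to the classical fact that decreasing rearrangement on $[0,\infty)$ is an $L^2$-contraction. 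Applying this with $f=u(\cdot,\exp_y(te))$ and $g=u(\cdot,y)$ for a unit vector $e\in T_y\mS^w$, dividing by $t^2$, letting $t\to 0$ and using Fatou's lemma yields $\int_{\mM_\ci}|d^{\mS}\ti f|^2\le\int_{\mM_\ci}|d^{\mS}u|^2$. Adding the two estimates completes the argument; here $\ti f$ is a priori only Lipschitz (and compactly supported) rather than smooth, but it lies in $H^{1,2}$, which suffices since $\mu_\ci$ is equally the infimum of $\cF_\ci^{s_\ci}$ over compactly supported $H^{1,2}$-functions, and an $O(v)$-invariant mollification produces smooth competitors if desired.

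I expect the transverse ($\mS^w$-direction) estimate to be the main obstacle. Slice-wise rearrangement is tailor-made for the $\mH_\ci^v$-directions, but a priori it could distort the dependence of $u$ on the $\mS^w$-variable; the $L^2$-contraction property is exactly what controls this, and the remaining points — measurable dependence of the rearrangement on $y$, the justification of the difference-quotient limit, and the mild loss of regularity of $\ti f$ — are routine once that property is available.
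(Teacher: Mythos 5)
Your proposal is correct and follows essentially the same route as the paper's proof: slice-wise $O(v)$-invariant (hyperbolic) rearrangement in the $\mH_\ci^v$-factor, equimeasurability for the $L^2$- and $L^{p_n}$-terms, a P\'olya--Szeg\H{o} inequality for the $\mH_\ci^v$-part of the gradient, and the $L^2$-nonexpansivity of rearrangement applied to difference quotients to control the $\mS^w$-part. The only difference is that the paper simply quotes Baernstein's symmetrization results for these three properties (and is equally brief about the $t\to 0$ limit and regularity issues you mention), whereas you sketch their proofs.
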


\begin{proof} 
The proof uses standard arguments and we just give a sketch. We must 
show that for any non-negative compactly supported smooth function 
$u: \mM_\ci \to \mR$ there is a $O(v)$-invariant non-negative compactly 
supported smooth function $\tilde{u}: \mM_\ci \to \mR$ satisfying 
$\cF_\ci^{s_\ci}(\tilde{u}) \leq \cF_\ci^{s_\ci}(u)$. If $\phi$ is a 
non-negative function on $\mH_\ci^v$, there is a non-negative 
$O(v)$-invariant function $\phi^*$ defined on the same space called 
the {\em hyperbolic rearrangement of $\phi$}, see \cite{baernstein:94}. 
This has the properties that for $p \geq 1$
\begin{subequations}
\begin{align}
\| \phi^* \|_{L^p( \mH_\ci^v)} &= \| \phi \|_{L^p( \mH_\ci^v)}, 
\label{sph_rea_1} \\
\| \phi_1^* - \phi_2^* \|_{L^p( \mH_\ci^v)} 
&\leq \| \phi_1 - \phi_2 \|_{L^p( \mH_\ci^v)},
\label{contraction} \\
\| d \phi^* \|_{L^p( \mH_\ci^v)} &\leq \| d \phi \|_{L^p( \mH_\ci^v)},
\label{sph_rea_2} 
\end{align}
\end{subequations}
see \cite[Section~4, Corollaries 1 and 3]{baernstein:94}.

Let $u$ be a non-negative function on $\mM_\ci$. 
We set $\tilde{u}(\cdot,y) \definedas (u(\cdot,y))^*$.
From \eqref{sph_rea_1} and \eqref{sph_rea_2} we have
$\| \tilde{u} \|_{L^{p_n}( \mM_\ci)} = \| u \|_{L^{p_n}( \mM_\ci)}$ and 
$\|d_{\mH_\ci^v} \tilde{u} \|_{L^2( \mM_\ci)} \leq
\| d_{\mH_\ci^v} u \|_{L^2( \mM_\ci)}$.
Let $\gamma: (-\ep,\ep) \to S^w$ be a curve. We apply 
\eqref{contraction} with $\phi_1 = u(\cdot,\gamma(t))$, 
$\phi_2 = u(\cdot, \gamma(0))$, divide by $|t|$, and let $t$ 
tend to $0$. From this we conclude 
\[
\| d_{S^w} \tilde{u} (\gamma'(0)) \|_{L^2( \mH_\ci^v \times \{ \gamma(0) \}) } 
\leq \| d_{S^w}u (\gamma'(0)) \|_{L^2( \mH_\ci^v \times \{ \gamma(0) \})}
\]
and $\|d_{S^w} \tilde{u} \|_{L^2( \mM_\ci)} \leq \| d_{S^w} u \|_{L^2( \mM_\ci)}$.
It follows that $\cF_\ci^{s_\ci}(\tilde{u}) \leq \cF_\ci^{s_\ci}(u)$ which 
ends the proof of Lemma \ref{lemma21}. 
\end{proof}

%%%%%%%%%%%%%%%%%%%%%%%%%%%%%%%%%%%%%%%%%%%%%%%%%%%%%%%%%%%%%%%%%%%%%%%%%
\section{Comparing $\cF_\ci^b$ to $\cF_{\ci'}^{b'}$}
\label{section_comparing}
%%%%%%%%%%%%%%%%%%%%%%%%%%%%%%%%%%%%%%%%%%%%%%%%%%%%%%%%%%%%%%%%%%%%%%%%%

We want to estimate $\cF_c^b$ from below in 
terms of $\cF_0^b$ and $\cF_1^{b_1}$ for $b_1$ as large as possible.

%%%%%%%%%%%%%%%%%%%%%%%%%%%%%%%%%%%%%%%%%%%%%%%%%%%%%%%%%%%%%%%%%%%%%%%%%
\subsection{Comparing $\cF_\ci^b$ to $\cF_0^b$}
%%%%%%%%%%%%%%%%%%%%%%%%%%%%%%%%%%%%%%%%%%%%%%%%%%%%%%%%%%%%%%%%%%%%%%%%%

For $\ci \neq 0$ define $\sh{\ci} (t) \definedas \ci^{-1} \sinh(\ci t)$. 
% We also set $\sh{0}(t) = t$. %%%in this paper we never use $\sh{0}$!
In polar coordinates we have
\[
\mH_0^v=\mR^v = ( (0,\infty) \times S^{v-1}, dt^2 + t^2 \rho^{v-1} ),
\]
and
\[
\mH_\ci^v
= ( (0,\infty) \times S^{v-1}, dt^2 + \sh{\ci}(t)^2 \rho^{v-1} ).
\]

\begin{lemma}
For $\ci > 0$ there is a unique diffeomorphism 
$f_\ci: [0,\infty) \to [0,\infty)$ 
for which the map $F_\ci: \mR^v \to \mH_\ci^v$ defined in polar 
coordinates as
\[
F_\ci: (t, \th) \mapsto (f_\ci(t), \th).
\]
is volume preserving. Further $f'_\ci(t) \leq 1$ for all 
$0 \leq t < \infty$. 
\end{lemma}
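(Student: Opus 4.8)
\emph{The plan} is to turn the volume‑preserving requirement into a first‑order ODE for $f_\ci$ with initial value $f_\ci(0)=0$, solve it explicitly by inverting the relevant "volume functions", and then read off the inequality $f_\ci'\le 1$ from an elementary one‑variable comparison.

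First I would write everything in polar coordinates. The volume element of $\mR^v=\mH_0^v$ is $t^{v-1}\,dt\,d\th$ and that of $\mH_\ci^v$ is $\sh{\ci}(s)^{v-1}\,ds\,d\th$, where $d\th$ denotes the volume element of $(S^{v-1},\rho^{v-1})$. If $F_\ci(t,\th)=(f_\ci(t),\th)$ with $f_\ci\colon[0,\infty)\to[0,\infty)$ a diffeomorphism (so in particular $f_\ci(0)=0$), then the pullback under $F_\ci$ of the volume element of $\mH_\ci^v$ is $\sh{\ci}(f_\ci(t))^{v-1}f_\ci'(t)\,dt\,d\th$; hence $F_\ci$ is volume preserving exactly when
\[
\sh{\ci}(f_\ci(t))^{v-1}\,f_\ci'(t)=t^{v-1}\qquad(t\ge 0).
\]
Using $f_\ci(0)=0$ and integrating, this is equivalent to $V_\ci(f_\ci(t))=V_0(t)$, where $V_0(t)\definedas\int_0^t r^{v-1}\,dr=t^v/v$ and $V_\ci(s)\definedas\int_0^s\sh{\ci}(r)^{v-1}\,dr$. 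Both $V_0$ and $V_\ci$ are smooth, strictly increasing bijections of $[0,\infty)$ (for $V_\ci$ one uses $\sh{\ci}(r)\to\infty$, whence $V_\ci(s)\to\infty$), so $f_\ci\definedas V_\ci^{-1}\circ V_0$ exists, is the unique solution, and is a homeomorphism of $[0,\infty)$ fixing $0$, smooth on $(0,\infty)$. Smoothness at the origin follows because $\sh{\ci}(r)=r+O(r^3)$ gives $V_\ci(s)=\tfrac{s^v}{v}\bigl(1+h(s^2)\bigr)$ with $h$ smooth and $h(0)=0$, so $V_\ci(f_\ci(t))=V_0(t)$ becomes $f_\ci(t)\bigl(1+h(f_\ci(t)^2)\bigr)^{1/v}=t$; the left-hand side is an odd smooth function of $f_\ci(t)$ with nonvanishing derivative at $0$, and the inverse function theorem gives $f_\ci\in C^\infty$ near $0$ with $f_\ci'(0)=1$.

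Finally, for the bound $f_\ci'\le 1$ I would prove $\sh{\ci}(f_\ci(t))\ge t$ for all $t\ge 0$; by the displayed ODE this immediately yields $f_\ci'(t)=\bigl(t/\sh{\ci}(f_\ci(t))\bigr)^{v-1}\le 1$ for $t>0$, and hence also at $t=0$ by continuity of $f_\ci'$. Since $\sh{\ci}(r)\ge r$ and $\cosh(\ci r)\ge 1$, we have
\[
\frac{d}{ds}\Bigl(\frac{\sh{\ci}(s)^v}{v}\Bigr)=\sh{\ci}(s)^{v-1}\cosh(\ci s)\ \ge\ \sh{\ci}(s)^{v-1}=V_\ci'(s),
\]
and both sides vanish at $s=0$, so $V_\ci(s)\le \sh{\ci}(s)^v/v=V_0(\sh{\ci}(s))$ for all $s\ge 0$. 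Applying the increasing map $V_0^{-1}$ gives $f_\ci^{-1}(s)=V_0^{-1}(V_\ci(s))\le \sh{\ci}(s)$, which is precisely the desired inequality $\sh{\ci}(f_\ci(t))\ge t$ after substituting $s=f_\ci(t)$.

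\emph{Expected main obstacle.} None of this is deep; the only point that needs genuine care is the regularity of $f_\ci$ (and hence of $F_\ci$) at the origin — namely checking that the explicit solution $V_\ci^{-1}\circ V_0$ is actually $C^\infty$ down to $t=0$, and that conversely a volume‑preserving map of the stated form must arise from a diffeomorphism of half‑lines fixing $0$.
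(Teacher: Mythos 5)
Your proposal is correct and follows essentially the same route as the paper: both reduce volume preservation to the integral identity $\int_0^{f_\ci(t)}\sh{\ci}(r)^{v-1}\,dr = t^v/v$, obtain $f_\ci$ by inverting the resulting volume (or radius) function, and deduce $f_\ci'\le 1$ from $t\le\sh{\ci}(f_\ci(t))$, which in both arguments comes from the same comparison using $\cosh(\ci t)\ge 1$. The only difference is cosmetic: you spell out the smoothness of $f_\ci$ at $t=0$ in more detail than the paper, which simply observes that $\phi_\ci(r)=\bigl(v\int_0^r\sh{\ci}(t)^{v-1}dt\bigr)^{1/v}$ is a smooth map of $[0,\infty)$ with $\phi_\ci'(0)=1$ and sets $f_\ci=\phi_\ci^{-1}$.
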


The map $F_\ci$ squeezes the radial coordinate, so we will call $F_\ci$ 
the \emph{radial squeezing map from $\mR^v$ to $\mH_\ci^v$}.

\begin{proof}
The function 
\[
\phi_\ci(r)
\definedas
\left(\frac{v}{\om_{v-1}}\vol\left(B^{\mH^v_\ci}_{x_0}(r)\right)\right)^{1/v}
=\left(v \int_0^r \sh{\ci}(t)^{v-1} \, dt\right)^{1/v}
\]
is a smooth function $[0,\infty)\to [0,\infty)$. Since $\phi_\ci'(0)=1$, 
$\phi_\ci'(r) > 0$ for $r\geq 0$, and $\lim_{r\to \infty}\phi_\ci(r) = 
\vol(\mH^v_\ci) = \infty$ it is a diffeomorphism. We set 
$f_\ci \definedas \phi_\ci^{-1}$. Let $B_0(r)$ be the ball of radius $r$ 
around $0$ in $\mR^v$. Since $F_\ci$ is assumed to be volume preserving
we have
\[
\vol^{\mR^v} ( B_0(r) )
=
\vol^{\mH_\ci^v } ( F_\ci( B_0(r) ) ),
\]
or 
\begin{equation} \label{f(r)-identity}
\frac{\om_{v-1}}{v} r^v
=
\om_{v-1} \int_0^{f_\ci(r)} \sh{\ci}(t)^{v-1} \, dt.
\end{equation}
Differentiating \eqref{f(r)-identity} we get
\[
r^{v-1}
=
f_\ci'(r) \sh{\ci}( f_\ci(r) )^{v-1}.
\]
{}From \eqref{f(r)-identity} together with 
$\sh{\ci}'(t) = \cosh(\ci t) \geq 1$ we find
\[
\begin{split}
r^v 
&=
\int_0^{f_\ci(r)} v \sh{\ci}(t)^{v-1} \, dt \\
&= 
\int_0^{f_\ci(r)} ( \sh{\ci}(t)^{v} )' \frac{1}{\sh{\ci}'(t)} \, dt \\
&\leq 
\int_0^{f_\ci(r)} ( \sh{\ci}(t)^{v} )' \, dt \\
&=
\sh{\ci}(f(r))^{v},
\end{split}
\]
so $r \leq \sh{\ci}(f_\ci(r))$ and we conclude that $f_\ci'(r) \leq 1$.
\end{proof}

We extend the radial squeezing map to a volume preserving map 
$\wihat F_\ci: \mM_0\to \mM_\ci$ by setting
\[
\wihat F_\ci \definedas F_\ci \times \Id_{\mS^w}: 
\mR^v\times \mS^w\to \mH_\ci^v\times \mS^w.
\]

\begin{prop} \label{proprad}
For $O(v)$-invariant functions $u: \mM_\ci \to \mR$ we have 
\[
\cF_\ci^\bi(u) \geq \cF_0^\bi(u \circ \wihat F_\ci).
\]
\end{prop}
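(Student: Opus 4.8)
The plan is to compare the two Yamabe functionals term by term after pulling back $u$ along the volume-preserving map $\wihat F_\ci = F_\ci \times \Id_{\mS^w}$. Write $\tilde u \definedas u \circ \wihat F_\ci$ on $\mM_0$. Since $\wihat F_\ci$ is volume preserving, the change-of-variables formula immediately gives $\|\tilde u\|_{L^{p_n}(\mM_0)} = \|u\|_{L^{p_n}(\mM_\ci)}$ and $\int_{\mM_0} \bi \tilde u^2 \, dv = \int_{\mM_\ci} \bi u^2 \, dv$, so the denominators agree and the zeroth-order terms in the numerators agree. Hence the proposition reduces to the gradient estimate
\[
\int_{\mM_0} |d\tilde u|^2 \, dv \leq \int_{\mM_\ci} |du|^2 \, dv.
\]

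Next I would split the gradient into its $\mS^w$-component and its radial (i.e.\ $\mH^v_\ci$-) component, which are orthogonal in the product metric $G_\ci = \eta^v_\ci + \rho^w$. For the spherical part, along each slice $\{p\}\times \mS^w$ the map $\wihat F_\ci$ is just the identity on $\mS^w$ and a radial relabelling $t\mapsto f_\ci(t)$ on the $\mH^v$-factor; since $F_\ci$ is volume preserving, integrating the pointwise identity $|d_{\mS^w}\tilde u|^2(t,\theta,q) = |d_{\mS^w}u|^2(f_\ci(t),\theta,q)$ over $\mM_0$ and changing variables in the $\mH^v$-factor gives equality of the $\mS^w$-parts of the two Dirichlet integrals. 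For the radial part, use that $u$ is $O(v)$-invariant, so it depends on $\mH^v_\ci$ only through the radial coordinate $s = f_\ci(t)$; writing $u = u(s,q)$ we have $\tilde u(t,q) = u(f_\ci(t),q)$ and, by the chain rule, $\pa_t \tilde u = f_\ci'(t)\,(\pa_s u)(f_\ci(t),q)$. Therefore, using $|f_\ci'(t)|\leq 1$ from the preceding lemma,
\[
|d_{\mR^v}\tilde u|^2(t,q) = f_\ci'(t)^2\,|d_{\mH^v_\ci}u|^2(f_\ci(t),q) \leq |d_{\mH^v_\ci}u|^2(f_\ci(t),q).
\]
Integrating over $\mM_0$ and changing variables via $\wihat F_\ci$ (again using that it is volume preserving) bounds the $\mH^v$-part of $\int_{\mM_0}|d\tilde u|^2$ by the $\mH^v$-part of $\int_{\mM_\ci}|du|^2$. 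Adding the two parts gives the required gradient inequality, and combining with the equalities of the other terms yields $\cF_\ci^\bi(u) \geq \cF_0^\bi(\tilde u)$.

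There is a minor point to be careful about: the radial relabelling is expressed most cleanly in polar coordinates, which are singular at the origin $x_0$, so strictly speaking the pointwise gradient identities hold on $\mM_\ci\setminus(\{x_0\}\times\mS^w)$, a set of measure zero, and one should note that $\tilde u$ inherits enough regularity (Lipschitz, hence $H^{1,2}_{\mathrm{loc}}$) from $u$ for the integrals to make sense and the integration to be valid. This is routine. The only genuinely substantive ingredient is the bound $f_\ci'\leq 1$, and that has already been established in the lemma preceding the proposition; everything else is the orthogonal decomposition of the gradient together with two applications of the volume-preserving change of variables. So I expect no real obstacle — the ``hard part,'' such as it is, is simply bookkeeping the two orthogonal components of $du$ correctly and invoking $|f_\ci'|\le 1$ on the radial one while getting exact equality on the spherical one.
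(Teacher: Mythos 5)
Your proof is correct and follows essentially the same route as the paper: orthogonal splitting of the differential into the $\mS^w$-part (unchanged, since $\wihat F_\ci$ is the identity there) and the radial $\mH_\ci^v$-part (scaled by $f_\ci'\leq 1$, using $O(v)$-invariance so that only the radial derivative appears), combined with the volume-preserving change of variables for all the integrals. Your extra remark about the coordinate singularity at $x_0$ being a measure-zero set is a harmless refinement of what the paper leaves implicit.
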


\begin{proof}
The differential $d(u \circ \wihat F_\ci)$ decomposes orthogonally in 
a $\mR^v$-component $d_{\mR^v}(u \circ \wihat F_\ci)$ and 
a $\mS^w$-component $d_{\mS^w}(u\circ \wihat F_\ci)$.
Similarly, $du$ splits orthogonally in a $\mH_\ci^v$-component 
$d_{\mH_\ci^v}u$ and a $\mS^w$-component $d_{\mS^w}u$. Then 
$d_{\mR^v}(u \circ \wihat F_\ci) = d_{\mH_\ci^v}u \circ d\wihat F_\ci$
and $d_{\mS^w}(u \circ \wihat F_\ci) = 
d_{\mS^w}u \circ d\wihat F_\ci = d_{\mS^w}u$. Thus
\[
|d_{\mR^v}(u \circ \wihat F_\ci)| 
= |d_{\mH_\ci^v} u \circ d\wihat F_\ci|
= |d_{\mH_\ci^v} u| f'(t)
\leq |d_{\mH_\ci^v} u|
\]
and
\[
|d_{\mS^w}(u \circ \wihat F_\ci)| = |d_{\mS^w}u|.
\]
It follows that $|d(u \circ \wihat F_\ci)| \leq |du|$. Further the volume 
form is preserved by the map $\wihat F_\ci$ and the Proposition follows.
\end{proof}

\begin{corollary} \label{cor1}
If $s_\ci > 0$ then $\mu_\ci \geq \frac{s_\ci}{s_0}\mu_0$.
\end{corollary}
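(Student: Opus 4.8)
The plan is to chain the two comparison results established just above with the elementary scaling property of $\cF_\ci^\bi$ in the potential $\bi$. First I would observe that the hypothesis propagates to $\ci=0$: since $s_\ci = w(w-1) - \ci^2 v(v-1)$ is non-increasing in $\ci$, the assumption $s_\ci>0$ forces $s_0 = w(w-1) \geq s_\ci > 0$. Hence the ratio $s_\ci/s_0$ lies in $(0,1]$ and the right-hand side of the claimed inequality is well defined and we may legitimately divide by $s_0$. The case $\ci=0$ is trivial, so I assume $\ci>0$.

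By Lemma~\ref{lemma21} it suffices to prove $\cF_\ci^{s_\ci}(u) \geq \frac{s_\ci}{s_0}\mu_0$ for every $O(v)$-invariant admissible function $u$ on $\mM_\ci$ (compactly supported, smooth, not identically zero). Applying Proposition~\ref{proprad} to such a $u$ gives
\[
\cF_\ci^{s_\ci}(u) \geq \cF_0^{s_\ci}(u\circ\wihat F_\ci),
\]
and $u\circ\wihat F_\ci$ is again compactly supported and smooth because $\wihat F_\ci\colon \mM_0\to\mM_\ci$ is a volume-preserving diffeomorphism, so it is an admissible test function on $\mM_0$.

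Next I would invoke the monotonicity inequality recorded immediately after the definition of $\cF_\ci^\bi$: since $0<s_\ci\leq s_0$, for every admissible function $w$ on $\mM_0$ one has $\cF_0^{s_\ci}(w) \geq \frac{s_\ci}{s_0}\cF_0^{s_0}(w)$ (this follows at once from the pointwise bound $a_n|dw|^2 + s_\ci w^2 \geq \frac{s_\ci}{s_0}\bigl(a_n|dw|^2 + s_0 w^2\bigr)$ and division by the positive quantity $\|w\|_{L^{p_n}}^2$). Taking $w = u\circ\wihat F_\ci$, using $\cF_0^{s_0}(w)\geq \inf\cF_0^{s_0} = \mu_0$, and multiplying by $s_\ci/s_0>0$ yields
\[
\cF_\ci^{s_\ci}(u) \geq \frac{s_\ci}{s_0}\cF_0^{s_0}(u\circ\wihat F_\ci) \geq \frac{s_\ci}{s_0}\mu_0.
\]
Taking the infimum over all such $u$ and applying Lemma~\ref{lemma21} once more gives $\mu_\ci \geq \frac{s_\ci}{s_0}\mu_0$.

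There is no genuinely hard step: the corollary is just an assembly of Lemma~\ref{lemma21}, Proposition~\ref{proprad}, and the potential-scaling inequality. The only points needing a moment of care are confirming that $s_\ci>0$ implies $s_0>0$ (so that the division and the scaling inequality are legitimate) and noting that $u\circ\wihat F_\ci$ is still admissible, which is immediate since $\wihat F_\ci$ is a volume-preserving diffeomorphism.
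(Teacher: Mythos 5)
Your proof is correct and follows exactly the route the paper intends: symmetrize via Lemma~\ref{lemma21}, apply Proposition~\ref{proprad} with $\bi=s_\ci$, then use the scaling inequality $\cF_0^{s_\ci}\geq \frac{s_\ci}{s_0}\cF_0^{s_0}$ stated right after the definition of $\cF_\ci^\bi$, and finish with $\cF_0^{s_0}\geq\mu_0$. The observations that $s_\ci>0$ forces $s_0\geq s_\ci>0$ and that $u\circ\wihat F_\ci$ remains admissible are exactly the minor checks needed, so nothing is missing.
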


This corollary gives good estimates if $\ci$ is sufficiently small,
as then $s_\ci>0$. However in case $v>w$ the corollary can no longer 
be applied for $\ci$ close to $1$. 

%%%%%%%%%%%%%%%%%%%%%%%%%%%%%%%%%%%%%%%%%%%%%%%%%%%%%%%%%%%%%%%%%%%%%%%%% 
\subsection{Comparing $\cF_\ci^b$ to $\cF_1^{b_1}$}
%%%%%%%%%%%%%%%%%%%%%%%%%%%%%%%%%%%%%%%%%%%%%%%%%%%%%%%%%%%%%%%%%%%%%%%%%

For $\ci>0$ we define a diffeomorphism $R_\ci: \mH_\ci^v \to \mH_1^v$ by
$R_\ci (t,\theta) = (\ci t,\theta)$. The map $R_\ci$ is a $\ci$-homothety
in the sense that the Riemannian metric of $\mH_\ci^v$ is 
$\eta_\ci^v = \ci^{-2}R_\ci^* \eta_1^v$ where $\eta_1^v$ is the Riemannian 
metric of $\mH_1^v$. Taking the product with the identity map on the round 
sphere we obtain a map $\wihat R_\ci: \mM_\ci \to \mM_1$. The metric $G_\ci$ 
on $\mM_\ci$ is then given by 
$G_\ci = \wihat R_\ci^*(\ci^{-2} \eta_1^v + \rho^w)$.

The following Proposition is an extension of 
\cite[Lemma~3.7]{ammann.dahl.humbert:13b}. 

\begin{proposition} \label{propsig}
If $\ci \in (0,1)$, then 
$\cF_\ci^{\ci^2 s_1}(u\circ \wihat R_\ci) \geq \ci^{2w/n} \cF_1^{s_1}(u)$ 
for all functions $u \in C^{\infty}_c (\mM_1)$. 
\end{proposition}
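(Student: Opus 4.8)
The plan is to compare the three ingredients of the Yamabe functional -- the Dirichlet energy, the zeroth-order term, and the $L^{p_n}$-norm -- under the diffeomorphism $\wihat R_\ci: \mM_\ci \to \mM_1$, tracking the scaling factors coming from the $\ci$-homothety on the hyperbolic factor. Recall $G_\ci = \wihat R_\ci^*(\ci^{-2}\eta_1^v + \rho^w)$, so the pulled-back metric differs from $G_1 = \eta_1^v + \rho^w$ only by the conformal-type rescaling $\ci^{-2}$ on the $\mH^v$-directions. The key bookkeeping facts are: the volume element transforms by $dv_{G_\ci} = \wihat R_\ci^*(\ci^{-v} dv_{G_1})$ since the $v$ hyperbolic directions each contribute a factor $\ci^{-1}$; the $\mH^v$-component of the gradient picks up a factor $\ci^2$ when measured in $G_\ci$ (because $|d_{\mH}w|^2_{\ci^{-2}\eta_1} = \ci^2 |d_{\mH}w|^2_{\eta_1}$), while the $\mS^w$-component is unchanged; and the scalar curvature term is built to be $\ci^2 s_1$ precisely so that it scales the same way as the hyperbolic part of the energy.

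First I would write, for $u \in C^\infty_c(\mM_1)$ and $w \definedas u \circ \wihat R_\ci$,
\[
\int_{\mM_\ci} \bigl(a_n |dw|^2_{G_\ci} + \ci^2 s_1\, w^2\bigr)\, dv_{G_\ci}
= \int_{\mM_1} \bigl(a_n\,\ci^2\,|d_{\mH}u|^2 + a_n |d_{\mS^w}u|^2 + \ci^2 s_1\, u^2\bigr)\,\ci^{-v}\,dv_{G_1}.
\]
Since $\ci \in (0,1)$ we have $\ci^2 \leq 1$, so $a_n\,\ci^2\,|d_{\mH}u|^2 + a_n|d_{\mS^w}u|^2 \geq \ci^2\bigl(a_n|d_{\mH}u|^2 + a_n|d_{\mS^w}u|^2\bigr) = \ci^2 a_n |du|^2_{G_1}$; combining with the zeroth-order term this gives a lower bound of $\ci^{2-v}\int_{\mM_1}(a_n|du|^2_{G_1} + s_1 u^2)\, dv_{G_1}$ for the numerator of $\cF_\ci^{\ci^2 s_1}(w)$. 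For the denominator, $\|w\|_{L^{p_n}(\mM_\ci)}^{p_n} = \int_{\mM_1} |u|^{p_n}\,\ci^{-v}\,dv_{G_1} = \ci^{-v}\|u\|_{L^{p_n}(\mM_1)}^{p_n}$, so $\|w\|_{L^{p_n}(\mM_\ci)}^2 = \ci^{-2v/p_n}\|u\|_{L^{p_n}(\mM_1)}^2$. Dividing, the exponent of $\ci$ that multiplies $\cF_1^{s_1}(u)$ is $(2-v) - (-2v/p_n) = 2 - v + 2v/p_n$.

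The last step is the purely arithmetic check that this exponent equals $2w/n$. Using $p_n = 2n/(n-2)$ one has $2v/p_n = v(n-2)/n$, hence $2 - v + 2v/p_n = 2 - v + v(n-2)/n = 2 - 2v/n = (2n - 2v)/n = 2w/n$ since $v + w = n$. This yields $\cF_\ci^{\ci^2 s_1}(u \circ \wihat R_\ci) \geq \ci^{2w/n}\cF_1^{s_1}(u)$, as claimed. The only place requiring genuine care -- rather than routine substitution -- is the inequality $a_n\ci^2|d_{\mH}u|^2 + a_n|d_{\mS^w}u|^2 \geq \ci^2 a_n|du|^2_{G_1}$, i.e.\ making sure one weakens the mixed sum in the correct direction; this works because we are scaling \emph{down} the sphere-direction contribution by the factor $\ci^2 \leq 1$, which is legitimate since $|du|^2_{G_1} = |d_{\mH}u|^2 + |d_{\mS^w}u|^2$. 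One should also note explicitly that $\wihat R_\ci$ is a diffeomorphism preserving compact supports, so $u \circ \wihat R_\ci \in C^\infty_c(\mM_\ci)$ and all integrals are finite.
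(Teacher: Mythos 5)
Your proof is correct and follows essentially the same route as the paper's: the pointwise bound $\ci^2|d_{\mH}u|^2+|d_{\mS^w}u|^2\geq \ci^2|du|^2_{G_1}$ (using $\ci\leq 1$), the volume scaling $dv^{G_\ci}=\ci^{-v}\wihat R_\ci^*dv^{G_1}$, and the exponent count $2-v+2v/p_n=2w/n$. Only a cosmetic remark: avoid reusing the letter $w$ for $u\circ\wihat R_\ci$, since $w$ already denotes the sphere dimension appearing in the exponent $2w/n$.
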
 

\begin{proof}
We have
\[
\begin{split}
|d(u \circ \wihat R_\ci)|^2_{G_\ci} 
&= |R_\ci^*(du)|_{G_\ci}^2 \\
&= |du|_{\ci^{-2} \eta_1^v+\rho^w}^2 \\
&= \ci^2 |d_{\mH_\ci^v} u|^2_{\eta^v_1} + |d_{\mS^w} u|_{\rho^w}^2 \\
&\geq \ci^2 \left(|d_{\mH_\ci^v} u|^2_{\eta^v_1} + |d_{\mS^w} u|_{\rho^w}^2\right) \\
&= \ci^2 |du|^2_{g_1}.
\end{split} 
\]
In addition, $dv^{G_\ci} = \ci^{-v} \wihat R_\ci^* dv^{g_1}$. From this we 
find that 
\[
\begin{split}
\cF_\ci^{\ci^2 s_1}(u \circ \wihat R_\ci)
&= 
\frac{
\int_{\mM_c} \left( a_n |d(u \circ \wihat R_\ci)|^2_{G_\ci} 
+ \ci^2 s_1 (u \circ \wihat R_\ci)^2\right) \, dv^{G_\ci}}
{{\left(\int_{\mR^v \times S^w} (u \circ \wihat R_\ci)^{p_n} 
\, dv^{G_\ci}\right)}^{\frac{2}{p_n}}} \\
&\geq 
\frac{\int_{\mM_1} 
\left( a_n\ci^2 |du|^2_{g_1} + \ci^2 s_1 u^2\right) \ci^{-v} \, dv^{g_1}}
{{\left(\int_{\mR^v \times S^w} u^{p_n} \ci^{-v} \, dv^{g_1}
\right)}^{\frac{2}{p_n}}}\\
&=
\ci^{2w/n} \cF_1^{s_1}(u),
\end{split} 
\]
which is the statement of the Proposition.
\end{proof} 

To apply the proposition, note that 
\[
s_\ci = w(w-1) - \ci^2 v (v-1) 
\geq \ci^2(w(w-1) - v (v-1)) = \ci^2 s_1.
\]
This implies 
\[
\cF_\ci^{s_\ci}(u\circ \wihat R_\ci) \geq \cF_\ci^{\ci^2 s_1}(u).
\]
By taking the infimum over all non-vanishing smooth functions 
$u\in C^\infty_c(\mM_1)$ with compact support we obtain the following.
\begin{corollary}\label{cor2}
For $c\in (0,1)$ we have 
\[
\mu_c \geq c^{2w/n} \mu_1 .
\]
\end{corollary}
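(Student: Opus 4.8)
The plan is to chain together the two comparison results established just above, namely Proposition~\ref{propsig} together with the scalar curvature inequality $s_\ci \ge \ci^2 s_1$, and then take an infimum. First I would recall that by Lemma~\ref{lemma21} we may compute $\mu_1 = \inf \cF_1^{s_1}(u)$ where the infimum runs over $O(v)$-invariant (in particular, smooth compactly supported) functions on $\mM_1$; but in fact for the argument here it is cleaner to just work with all $u \in C^\infty_c(\mM_1)$, $u \not\equiv 0$, since $\mu_1 = \inf \cF_1^{s_1}(u)$ over that class by the definition of the conformal Yamabe constant.

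Fix $\ci \in (0,1)$ and let $u \in C^\infty_c(\mM_1)$ be arbitrary with $u \not\equiv 0$. The map $\wihat R_\ci \colon \mM_\ci \to \mM_1$ is a diffeomorphism, so $u \circ \wihat R_\ci \in C^\infty_c(\mM_\ci)$ and does not vanish identically. The key chain of inequalities is
\[
\cF_\ci^{s_\ci}(u \circ \wihat R_\ci)
\ \ge\ \cF_\ci^{\ci^2 s_1}(u \circ \wihat R_\ci)
\ \ge\ \ci^{2w/n}\,\cF_1^{s_1}(u).
\]
The first inequality uses the monotonicity $\cF_\ci^{\bi}(w) \ge \cF_\ci^{\bi'}(w)$ for $\bi \ge \bi'$ recorded in Section~\ref{conf_yam}, applied with $\bi = s_\ci$ and $\bi' = \ci^2 s_1$; this is legitimate precisely because $s_\ci = w(w-1) - \ci^2 v(v-1) \ge \ci^2\big(w(w-1) - v(v-1)\big) = \ci^2 s_1$, where one uses $0 < \ci < 1$ and $w(w-1) \ge 0$ to pass from $w(w-1)$ to $\ci^2 w(w-1)$. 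The second inequality is exactly the content of Proposition~\ref{propsig}. Since the left-hand side is bounded below by $\mu_\ci = \inf_{v \in C^\infty_c(\mM_\ci),\,v\not\equiv 0}\cF_\ci^{s_\ci}(v)$, we get $\mu_\ci \le$ ... — wait, the inequality goes the convenient way: the left side is one of the functionals whose infimum defines $\mu_\ci$, so $\cF_\ci^{s_\ci}(u \circ \wihat R_\ci) \ge \mu_\ci$ is the wrong direction; instead I use that $\mu_\ci \le \cF_\ci^{s_\ci}(u\circ\wihat R_\ci)$ is false in general, so the correct reading is: for the purpose of bounding $\mu_\ci$ from below we need a lower bound on \emph{every} $\cF_\ci^{s_\ci}(v)$, and the substitution $v = u \circ \wihat R_\ci$ does not exhaust all $v$ — but $\wihat R_\ci$ is a diffeomorphism, hence it does, and the displayed chain gives $\cF_\ci^{s_\ci}(u\circ\wihat R_\ci) \ge \ci^{2w/n}\cF_1^{s_1}(u) \ge \ci^{2w/n}\mu_1$. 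Taking the infimum over all such $u$ (equivalently over all $v \in C^\infty_c(\mM_\ci)$ via $v = u \circ \wihat R_\ci$) yields $\mu_\ci \ge \ci^{2w/n}\mu_1$.

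The main thing to be careful about — and really the only subtle point — is the bookkeeping of which direction the infimum goes and the observation that $\wihat R_\ci$ being a diffeomorphism makes the pullback $u \mapsto u \circ \wihat R_\ci$ a bijection of $C^\infty_c(\mM_1) \setminus\{0\}$ onto $C^\infty_c(\mM_\ci)\setminus\{0\}$, so that taking the infimum over $u$ on the right is the same as taking the infimum over $v = u \circ \wihat R_\ci$ on the left. Everything else (the monotonicity of $\cF_\ci^\bi$ in $\bi$, the scalar curvature inequality, Proposition~\ref{propsig}) is already in place, so the proof is a short assembly. I do not expect any genuine obstacle here; this corollary is purely a formal consequence of Proposition~\ref{propsig} and the elementary estimate on $s_\ci$.
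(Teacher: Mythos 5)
Your proof is correct and follows essentially the same route as the paper: the elementary estimate $s_\ci \geq \ci^2 s_1$ gives $\cF_\ci^{s_\ci}(u\circ\wihat R_\ci) \geq \cF_\ci^{\ci^2 s_1}(u\circ\wihat R_\ci)$, Proposition~\ref{propsig} gives the factor $\ci^{2w/n}$, and taking the infimum over $u\in C^\infty_c(\mM_1)$ (using that $\wihat R_\ci$ is a diffeomorphism, so the pullbacks exhaust $C^\infty_c(\mM_\ci)$) yields $\mu_\ci \geq \ci^{2w/n}\mu_1$. The momentary detour about the direction of the infimum is resolved correctly in your own text, so no gap remains.
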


This estimate gives uniform estimates fur $\mu_c$ if $c$ is bounded 
away from $0$. Because of $\mu_1 = \mu(\mS^n)$ we obtain explicit bounds 
in any dimension. However these bounds tend to $0$ as $c\to 0$.

%%%%%%%%%%%%%%%%%%%%%%%%%%%%%%%%%%%%%%%%%%%%%%%%%%%%%%%%%%%%%%%%%%%%%%%%% 
\section{Conclusions}
%%%%%%%%%%%%%%%%%%%%%%%%%%%%%%%%%%%%%%%%%%%%%%%%%%%%%%%%%%%%%%%%%%%%%%%%%

%%%%%%%%%%%%%%%%%%%%%%%%%%%%%%%%%%%%%%%%%%%%%%%%%%%%%%%%%%%%%%%%%%%%%%%%%
\subsection{Interpolation of the previous inequalities}
%%%%%%%%%%%%%%%%%%%%%%%%%%%%%%%%%%%%%%%%%%%%%%%%%%%%%%%%%%%%%%%%%%%%%%%%%

We now improve the bounds obtained in Corollaries \ref{cor1} and 
\ref{cor2} by combining Propositions \ref{proprad} and \ref{propsig} in 
an interpolation argument. 

\begin{theorem}\label{theo.concl}
For all $c\in (0,1)$ we have
\begin{equation} \label{est.general}
\mu_\ci
\geq 
\left( 
\frac{\mu_0}{\mu_1} - 
\frac{\ci^2 v(v-1)}{(1-\ci^2)w(w-1) + \ci^2 v(v-1)}
\left(\frac{\mu_0}{\mu_1} -\ci^{2w/n}\right)
\right) \mu_1 
\end{equation}
and 
\begin{equation}\label{est.sa}
\mu_\ci \geq \ci^{2w/n} \mu_1.
\end{equation}
\end{theorem}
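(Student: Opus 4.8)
The plan is to prove the two inequalities separately. The second one, \eqref{est.sa}, is exactly Corollary~\ref{cor2}, so there is nothing to do; I would simply cite it. The real work is \eqref{est.general}, and the idea is to interpolate between the two regimes: the estimate from Proposition~\ref{proprad} (squeezing $\mR^v\to\mH_\ci^v$, good for small $\ci$) and the estimate from Proposition~\ref{propsig} (the homothety $\mH_\ci^v\to\mH_1^v$, good for $\ci$ near $1$). Concretely, fix $c\in(0,1)$ and an $O(v)$-invariant test function $u$ on $\mM_c$; by Lemma~\ref{lemma21} it suffices to bound $\cF_\ci^{s_\ci}(u)$ from below. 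Split the scalar-curvature constant as a convex combination $s_\ci = \th\, s_0 + (1-\th)\,\ci^2 s_1$ for a suitable $\th = \th(\ci)\in[0,1]$; since $s_0 = w(w-1)$, $\ci^2 s_1 = \ci^2(w(w-1)-v(v-1))$, and $s_\ci = w(w-1)-\ci^2 v(v-1)$, solving the linear equation gives
\[
1-\th = \frac{\ci^2 v(v-1)}{(1-\ci^2)w(w-1)+\ci^2 v(v-1)},
\]
which is exactly the coefficient appearing in \eqref{est.general}. One checks $\th\in[0,1]$ for $\ci\in(0,1)$ using $s_0>0$ and $s_0 \ge \ci^2 s_1$ (the inequality noted just before Corollary~\ref{cor2}), with the denominator being a positive convex combination of $w(w-1)$ and $v(v-1)$.

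\textbf{Splitting the numerator.} Because the Yamabe functional is a ratio, decompose the numerator integrand along the convex combination of the zeroth-order coefficient:
\[
a_n|du|^2 + s_\ci u^2
= \th\bigl(a_n|du|^2 + s_0 u^2\bigr) + (1-\th)\bigl(a_n|du|^2 + \ci^2 s_1 u^2\bigr),
\]
so that, dividing by $\|u\|_{L^{p_n}(\mM_\ci)}^2$,
\[
\cF_\ci^{s_\ci}(u) = \th\,\cF_\ci^{s_0}(u) + (1-\th)\,\cF_\ci^{\ci^2 s_1}(u).
\]
Now estimate each piece. For the first, note $s_0 = b = w(w-1) > 0$, and apply Proposition~\ref{proprad} with $b = s_0$ (noting that this requires the term $b u^2$, not $s_\ci u^2$, which is why the splitting is done first): $\cF_\ci^{s_0}(u)\ge \cF_0^{s_0}(u\circ\wihat F_\ci)\ge \mu_0$, since $\mu_0 = \inf\cF_0^{s_0}$ over compactly supported functions and $u\circ\wihat F_\ci$ is such a function (it is $O(v)$-invariant but that only helps, not hurts). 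For the second piece, I want to run Proposition~\ref{propsig} backwards: it is stated for functions on $\mM_1$ composed with $\wihat R_\ci$, and $\wihat R_\ci:\mM_\ci\to\mM_1$ is a diffeomorphism, so writing $u = (u\circ\wihat R_\ci^{-1})\circ\wihat R_\ci$ gives $\cF_\ci^{\ci^2 s_1}(u)\ge \ci^{2w/n}\cF_1^{s_1}(u\circ\wihat R_\ci^{-1})\ge \ci^{2w/n}\mu_1$. (A minor point: Proposition~\ref{propsig} is stated for $C_c^\infty$ functions on $\mM_1$; since $\wihat R_\ci$ is a diffeomorphism it carries compactly supported functions to compactly supported functions, so this is fine, and by density it extends to the $O(v)$-invariant $H^{1,2}$ competitors appearing in Lemma~\ref{lemma21}.) Combining,
\[
\cF_\ci^{s_\ci}(u) \ge \th\,\mu_0 + (1-\th)\,\ci^{2w/n}\mu_1,
\]
and taking the infimum over all such $u$ yields $\mu_\ci \ge \th\,\mu_0 + (1-\th)\,\ci^{2w/n}\mu_1$. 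Factoring out $\mu_1$ and substituting $\th = 1 - \dfrac{\ci^2 v(v-1)}{(1-\ci^2)w(w-1)+\ci^2 v(v-1)}$ gives precisely \eqref{est.general}.

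\textbf{Where the friction is.} The conceptually delicate point — and the reason the statement is phrased as a theorem rather than an immediate corollary — is keeping track of \emph{which} zeroth-order constant $b$ is attached to the functional at each stage. Propositions~\ref{proprad} and \ref{propsig} are comparison statements $\cF_\ci^b \leftrightarrow \cF_0^b$ and $\cF_\ci^{\ci^2 s_1}\leftrightarrow\cF_1^{s_1}$ that preserve, respectively, the constant $b$ and the specific rescaling $s_1\mapsto\ci^2 s_1$; they do \emph{not} directly compare $\cF_\ci^{s_\ci}$ to anything. The convex splitting of the integrand is exactly the device that reconciles this: it expresses $\cF_\ci^{s_\ci}$ as a genuine convex combination of $\cF_\ci^{s_0}$ and $\cF_\ci^{\ci^2 s_1}$, each of which is in the exact form one of the two propositions can handle. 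Once that observation is in place the rest is bookkeeping. I would also double-check the non-strict positivity needed to invoke Proposition~\ref{proprad} (one needs $s_0 = w(w-1) > 0$, which holds since $w = n-k-1 \ge 2$ as $k\le n-3$) and that the infimum defining $\mu_0$ may legitimately be taken over the image of $O(v)$-invariant functions under $\wihat F_\ci$ — but since we are only producing a \emph{lower} bound for $\mu_\ci$, enlarging or restricting the competitor class for $\mu_0$ in the safe direction causes no trouble.
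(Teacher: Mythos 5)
Your proposal is correct and follows essentially the same route as the paper: the exact convex splitting $s_\ci=\th s_0+(1-\th)\ci^2 s_1$ with your $\th=\tau_0$, $1-\th=\la_0$ is precisely the paper's choice of weights $(\la_0,\tau_0)$ in its interpolation of Propositions~\ref{proprad} and~\ref{propsig}, and \eqref{est.sa} is quoted from Corollary~\ref{cor2} in both. Your explicit appeal to Lemma~\ref{lemma21} (to justify restricting to $O(v)$-invariant competitors before invoking Proposition~\ref{proprad}) is a point the paper leaves implicit, but it is the same argument.
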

As discussed in Appendix~\ref{app.opti}, Inequality~\eqref{est.general} 
is stronger than Inequality~\eqref{est.sa} for $\ci^{2w/n} < \mu_0/\mu_1$
and Inequality~\eqref{est.sa} is stronger for $\ci^{2w/n} > \mu_0/\mu_1$.

\begin{proof}
Inequality~\eqref{est.sa} is the statement of Corollary~\ref{cor2}. 
Assume that $\la \geq 0$ and $\tau \geq 0$ satisfy 
\begin{align}
\la + \tau &\leq 1 , \label{cond1} \\
\la \ci^2 s_1 + \tau s_0 &\leq s_\ci . \label{cond2}
\end{align}
Then we get
\begin{equation} \label{estimate-la-tau}
\begin{split}
\cF_\ci^{s_\ci}(u) 
&\geq
\la \cF_\ci^{\ci^2 s_1}(u\circ \wihat R_\ci^{-1}) 
+ \tau \cF_\ci^{s_0}(u\circ \wihat F_\ci)\\
&\geq
\la \ci^{2w/n} \cF_1^{s_1}(u\circ \wihat R_\ci^{-1}) 
+ \tau \cF_0^{s_0}(u\circ \wihat F_\ci)\\
&\geq
\la \ci^{2w/n} \mu_1 + \tau \mu_0, 
\end{split}
\end{equation}
where we used Proposition~\ref{propsig} for the second inequality. 
It follows that 
\begin{equation} \label{estimate-la-tau-2}
\mu_\ci \geq \la \ci^{2w/n} \mu_1 + \tau \mu_0.
\end{equation}
The lines described by $\la + \tau = 1$ and 
$\la \ci^2 s_1 + \tau s_0 = s_\ci$ intersect in $(\la_0,\tau_0)$ where
\begin{equation} \label{la0-eq}
\lambda_0 = \frac{v(v-1)}{(\ci^{-2}-1)w(w-1) + v (v-1)} \in (0,1),
\qquad 
\tau_0 = 1 - \la_0,
\end{equation}
see Appendix~\ref{app.opti}. Setting $\la \definedas \la_0$ and 
$\tau \definedas \tau_0$ in \eqref{estimate-la-tau-2} 
yields Inequality~\eqref{est.general}.
%
% \footnote{Removed stuff:
% In Appendix \ref{app.opti} we find the optimal values of $\la$ and
% $\tau$ for this inequality. The result is that the optimal estimate is
% obtained in $(1,0)$ (for $\ci^{2w/n} \geq \mu_0/\mu_1$) or in 
% $(\la_0,\tau_0)$ (for $\ci^{2w/n} \leq \mu_0/\mu_1$). Here
% $(\la_0,\tau_0)$ is the solution of \eqref{cond1}-\eqref{cond2} with
% inequalities replaced by equalities. We obtain 
% \begin{equation} %\label{est.general}
% \mu_\ci
% \geq 
% \left( 
% \left(\ci^{2w/n}-\frac{\mu_0}{\mu_1} \right)
% \frac{v(v-1)}{(\ci^{-2}-1)w(w-1) + v(v-1)}
% + \frac{\mu_0}{\mu_1}
% \right)\mu_1, 
% \end{equation}
% if $\ci^{2w/n} \leq \mu_0/\mu_1$, and 
% \[
% \mu_\ci \geq \ci^{2w/n} \mu_1
% \]
% for $\ci^{2w/n} \geq \mu_0/\mu_1$.}
\end{proof}

The estimates obtained by the theorem rely on explicit lower bounds
for $\mu_0$. Such lower bounds can be found in the literature in the 
following cases.

% changed to (i), (ii),... standard numbering in enumerate 
% conflicts with equation numbering (1), (2),...
\renewcommand{\theenumi}{\roman{enumi}}
\renewcommand{\labelenumi}{(\theenumi)}
\begin{enumerate}
\item \label{caseone} 
$v=1$, $w\geq 2$. Then $\mu_0 = \mu_1 = \mu_c = \mu(\mS^n)$ for all 
$c \in (0,1)$. This case is trivial as $\mR \times \mS^w$ is conformal 
to a round sphere of dimension $n=w+1$ with two points removed.
\item \label{casetwo} 
$(v,w)\in\{(2,2),(2,3),(2,7),(2,8),(3,2)\}$. In these cases bounds 
have been derived in \cite{petean.ruiz:11,petean.ruiz:13} using 
isoperimetric profiles.
\item \label{casethree} 
$v \geq 3$ and $w \geq 3$. See \cite{ammann.dahl.humbert:13} where 
an explicit lower bound of the Yamabe functional of $\mR^v \times \mS^w$ 
in terms of the Yamabe functionals of $\mR^v$ and $\mS^w$ is used. 
\item \label{casefour} 
$v \geq 4$ and $w =2$. This case is not explicitly written in 
\cite{ammann.dahl.humbert:13} but can be deduced from the main 
result of that paper. We just observe that this result implies that 
\[
\mu_0 \geq 
\frac{n a_n}{ (3 a_3)^{\frac{3}{n}} ((n-3) a_{n-3})^{\frac{n-3}{n}}} 
\mu(\mR^{n-3})^{\frac{n-3}{n}} 
\mu(\mR \times \mS^2)^{\frac{3}{n}}
\]
where $a_k \definedas \frac{4(k-1)}{k-2}$ for $k \geq 3$. Next, note 
that $\mu(\mR^{n-3}) = \mu(\mS^{n-3})$ and since $\mR \times \mS^2$ is 
conformally equivalent to $\mS^3$ with two points removed we have 
$\mu(\mR \times S^2) = \mu(\mS^3)$. Hence, we get 
\[
\mu_0 \geq 
\frac{n a_n}{ 24^{\frac{3}{n}} ((n-3) a_{n-3})^{\frac{n-3}{n}}} 
\mu(\mS^{n-3})^{\frac{n-3}{n}} 
\mu(\mS^3)^{\frac{3}{n}}.
\] 
In the case $(v,w)=(4,2)$ this leads to
\begin{equation}\label{ineq.gamma42}
\mu_0\geq 0.56885 \mu_1>54.77.
\end{equation}
A similar argument also yields lower bounds for $\mu_0$ in the cases
$v-2 \geq w \geq 3$. These bounds on $\mu_0$ are 
slightly stronger than the ones in \eqref{casethree}.
\end{enumerate}

The estimate is optimal in Case~\eqref{caseone}. In this case 
nothing remains to be proven, and we will not discuss it further.
In Cases~\eqref{casetwo} and~\eqref{casethree} the bound is not likely 
to be optimal. Any improvement of the lower bound for $\mu_0$ would 
improve the bounds obtained in Theorem~\ref{theo.concl}. In 
\cite{ammann.dahl.humbert:13} a lower bound on $\mu_\ci$ is derived 
which is uniform in $\ci$. Thus Theorem~\ref{theo.concl} does not 
currently yield improved estimates in Case~\eqref{casethree}. However, 
if a better lower bound for $\mu_0$ is available, it might be relevant as 
well, and will be also considered in the following. The most important 
applications thus come in Case~\eqref{casetwo}.

%%%%%%%%%%%%%%%%%%%%%%%%%%%%%%%%%%%%%%%%%%%%%%%%%%%%%%%%%%%%%%%%%%%%%%%%%
\subsection{Analytical Conclusions}
%%%%%%%%%%%%%%%%%%%%%%%%%%%%%%%%%%%%%%%%%%%%%%%%%%%%%%%%%%%%%%%%%%%%%%%%%

We now want to derive concrete bounds on $\La_{v+w,v-1}$ for special 
values of $v$ and $w$.

\begin{corollary} \label{cor52}
For all $c\in [0,1]$ and all $v\geq 2$ and $w\geq 2$ we obtain
\begin{equation} \label{formula.large.w}
\mu_\ci
\geq 
\left(
1 - \frac{ v(v-1) }
{ \left( \sqrt{v(v-1)} + \sqrt{w(w-1)} \right)^2 }
\right)\mu_0. 
\end{equation}
\end{corollary}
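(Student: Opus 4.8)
The plan is to feed Inequality~\eqref{est.general} of Theorem~\ref{theo.concl} into two elementary weakenings and then into a one-variable minimization, handling the endpoints $c=0$ and $c=1$ separately. Write $A\definedas v(v-1)$, $B\definedas w(w-1)$ and $n=v+w$; since $v,w\geq2$ we have $A,B>0$, and $\mu_0>0$ because the scalar curvature $w(w-1)$ of $\mR^v\times\mS^w$ is positive. Put $\la_0\definedas \frac{c^2A}{(1-c^2)B+c^2A}$, which is exactly the quantity $\la_0\in(0,1)$ appearing in \eqref{la0-eq}; then \eqref{est.general} rewrites as
\[
\mu_\ci \;\geq\; (1-\la_0)\,\mu_0 \;+\; \la_0\,c^{2w/n}\,\mu_1
\qquad\text{for all } c\in(0,1).
\]

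Next I would weaken the right-hand side using two monotonicity facts: $\mu_0\leq\mu_1=\mu(\mS^n)$ (the conformal Yamabe constant of any $n$-dimensional Riemannian manifold is at most $\mu(\mS^n)$, by Aubin), and $c^{2w/n}\geq c^2$ for $c\in(0,1)$ (because $w\leq n$). Since $\mu_0>0$ and $\la_0\geq0$, replacing $\mu_1$ by $\mu_0$ and then $c^{2w/n}$ by $c^2$ only decreases the bound, so
\[
\mu_\ci \;\geq\; \big((1-\la_0)+\la_0 c^2\big)\,\mu_0
\;=\; \frac{(1-c^2)B+c^4A}{(1-c^2)B+c^2A}\,\mu_0
\;=\; \psi(c^2)\,\mu_0,
\]
where $\psi(x)\definedas \dfrac{(1-x)B+x^2A}{(1-x)B+xA}$ on $x\in[0,1]$. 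It then remains to show $\min_{x\in[0,1]}\psi(x)=1-\dfrac{A}{(\sqrt A+\sqrt B)^2}$. The denominator is positive throughout $[0,1]$, and an elementary computation shows that $\psi'$ equals a positive multiple of the quadratic $(A-B)x^2+2Bx-B$, which is negative at $x=0$ and positive at $x=1$; hence $\psi$ has a unique critical point $x_\ast=\dfrac{\sqrt B}{\sqrt A+\sqrt B}\in(0,1)$, a minimum, and simplifying gives $\psi(x_\ast)=\dfrac{B+2\sqrt{AB}}{(\sqrt A+\sqrt B)^2}=1-\dfrac{A}{(\sqrt A+\sqrt B)^2}$, exactly the constant in \eqref{formula.large.w}.

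This proves the claim for $c\in(0,1)$, and the endpoints are immediate: for $c=0$ the assertion is $\mu_0\geq\big(1-\tfrac{A}{(\sqrt A+\sqrt B)^2}\big)\mu_0$, true since the constant lies in $(0,1)$; for $c=1$ it is $\mu_1\geq\big(1-\tfrac{A}{(\sqrt A+\sqrt B)^2}\big)\mu_0$, which follows from $\mu_1\geq\mu_0>0$. There is no genuine obstacle here: once one decides to use \eqref{est.general}, the steps are essentially forced. The only points needing care are the sign hypotheses ($v,w\geq2$ to guarantee $A,B>0$ and $\mu_0>0$, and the input $\mu_0\leq\mu_1$), the correct direction of $c^{2w/n}\geq c^2$, and the bookkeeping in evaluating $\psi(x_\ast)$; keeping $c^{2w/n}$ in place of $c^2$ would give a sharper but $c$- and dimension-dependent bound, whereas the value of this corollary is the clean uniform closed form.
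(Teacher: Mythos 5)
Your proposal is correct and follows essentially the same route as the paper: starting from Inequality~\eqref{est.general}, weakening via $\mu_1\geq\mu_0$ and $\ci^{2w/n}\geq\ci^2$ to obtain the bound \eqref{ineq.anal}, and then minimizing the resulting rational function of $\ci^2$ over $[0,1]$ at $\ci^2=\sqrt{w(w-1)}/(\sqrt{v(v-1)}+\sqrt{w(w-1)})$. You merely carry out the calculus and the endpoint cases $\ci\in\{0,1\}$ more explicitly than the paper does.
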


\begin{proof}
Using \eqref{est.general} and the facts that $\mu_1 > \mu_0$ and 
$\ci^{2w/n} \geq \ci^2$ we deduce
\begin{equation}\label{ineq.anal}
\mu_\ci
\geq 
\left(
1- \frac{(1 - \ci^2) \ci^2 v (v-1)}{(1-\ci^2) w(w-1) + \ci^2 v(v-1)}
\right)\mu_0 
\end{equation}
for general values of $v$ and $w$. The right hand side attains its
minimum over $\ci \in [0,1]$ for
\[
\ci^2 = \frac{ \sqrt{w(w-1)} }{ \sqrt{v(v-1)} + \sqrt{w(w-1)} }, 
\]
from which \eqref{formula.large.w} follows.
\end{proof}

\begin{example} 
$v=2$, $w=3$: In \cite[Theorem~1.4]{petean.ruiz:13} Petean and Ruiz 
have obtained $\mu(\mR^2\times \mS^3) \geq 0.75 \mu(\mS^5)$, that is
$\mu_0\geq 0.75 \mu_1$. Using \eqref{formula.large.w} we obtain 
\[
\mu_\ci\geq \frac{\sqrt{3}}2 \mu_0 \geq 0.649 \mu_1 \geq 51.2
\]
Thus $\Lambda_{5,1}\geq 51.2$. 

Compare this value with $\mu(\mS^5)=78.996...$
\end{example}

\begin{example} %\label{ex.ana.2} 
$v=2$, $w=7$: In \cite[Theorem~1.6]{petean.ruiz:13} Petean and Ruiz 
have obtained $\mu(\mR^2\times \mS^7)\geq 0.747 \mu(\mS^9)$, that is
$\mu_0\geq 0.747 \mu_1$. Using \eqref{formula.large.w} we obtain 
\[
\mu_\ci
\geq \left( 1 - \frac{2}{(\sqrt{2}+\sqrt{42})^2}\right) \mu_0 
\geq 0.723 \mu_1 \geq 106.9
\]
Thus $\Lambda_{9,1}\geq 106.9$
%$\Lambda_{9,1} \geq 106.9145822$. 

Compare this value with $\mu(\mS^9)=147.87...$
\end{example}

\begin{example} %\label{ex.ana.3} 
$v=2$, $w=8$: In \cite[Theorem~1.6]{petean.ruiz:13} Petean and Ruiz 
have obtained $\mu(\mR^2\times \mS^8) \geq 0.626 \mu(\mS^{10})$, that is 
$\mu_0\geq 0.626 \mu_1$. 
Using \eqref{formula.large.w} we obtain 
\[
\mu_\ci\geq \left( 1 - \frac{2}{(\sqrt{2}+\sqrt{56})^2}\right) \mu_0 
\geq 0.610 \mu_1 \geq 100.69
\]
Thus $\Lambda_{10,1}\geq 100.69$.
%$\Lambda_{10,1} \geq 100.692$. 

Compare this value with $\mu(\mS^{10})=165.02...$
\end{example}

%Example: $v=w=2$, leads to $\La_{4,1}$.
%And thus $\frac{\mu_\ci}{\mu_0} -1\geq -\frac14$. This implies
%$\mu_\ci\geq \frac34 \mu_0$, and thus $\La_{4,1}\geq \frac34 \mu_0$. 
%Using Petean this gives $\La_{4,1}\geq 0.51 \mu(\mS^4)$.

\begin{figure}
\begin{tabular}{c|c||l|l|l|l}
$(v,w)$ & $(n,k)$  & $\mu_0/\mu_1$ & Analytic & Numeric & $\mu_1=\mu(\mS^n)$\\
\hline\hline
$(2,2)$ & $(4,1)$  & $0.68$  & $\phantom{1}38.9$ & $\phantom{1}38.9$ & $\phantom{1}61.56$\\
$(2,3)$ & $(5,1)$  & $0.75$  & $\phantom{1}51.2$ & $\phantom{1}56.6$ & $\phantom{1}79.00$\\
$(2,7)$ & $(9,1)$  & $0.747$ & $106.9$           & $109.2$ & $147.87$\\
$(2,8)$ & $(10,1)$ & $0.626$ & $100.6$           & $102.6$ & $165.02$\\
$(3,2)$ & $(5,2)$  & $0.63$  & $\phantom{1}29.7$ & $\phantom{1}45.1$ & $\phantom{1}79.00$\\
$(4,2)$ & $(6,3)$  & $0.568$ & $\phantom{1}36.4$ & $\phantom{1}49.9$ & $\phantom{1}96.30$\\
%More exact $(2,3)$ & $0.75$ & $51.2$ & $56.6199569$ & $78.99686$\\
%More exact $(3,2)$ & $0.63$ & $29.7$ & $45.136$ & $78.99686$\\
%More exact $(2,7)$ & $0.747$ & & $109.2$ & $147.8778$\\
%More exact $(2,8)$ & $0.626$ & & $102.6$ & $165.0220642$\\
\end{tabular}
\caption{Lower estimates for~$\inf\mu_\ci=\Lambda_{n,k}$. 
The fourth column shows 
the analytic estimates from Corollary~\ref{cor52} and~\ref{cor54}. 
The fifth column shows the numerical estimates from 
Subsection~\ref{subsec.num}. The value for~$\mu_1$ is approximate,
whereas the lower bounds are rounded down.}\label{fig.analytic}
\end{figure}

In the case $v=w$ we find better estimates for the right hand side of 
\eqref{est.general}.

\begin{corollary} \label{cor54}
Assume $v = w \geq 2$ and $\mu_0/\mu_1 \geq \ga > 0$. Then
\[
\inf_{\ci\in[0,1]} \mu_\ci 
\geq \left( \ga -\frac{4}{27} \ga^3 \right) \mu_1 
\]
\end{corollary}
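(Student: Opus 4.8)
The plan is to substitute the hypothesis $v=w$ directly into Inequality~\eqref{est.general} of Theorem~\ref{theo.concl}, watch it collapse to something elementary, and then finish by a one-variable minimization.

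First I would record what happens to \eqref{est.general} when $v=w$. Then $n=v+w=2w$, so $2w/n=1$ and $\ci^{2w/n}=\ci$; and since $w\geq2$ we have $w(w-1)\neq0$, so the coefficient simplifies:
\[
\frac{\ci^2 v(v-1)}{(1-\ci^2)w(w-1)+\ci^2 v(v-1)}=\frac{\ci^2 w(w-1)}{w(w-1)}=\ci^2 .
\]
Writing $x\definedas\mu_0/\mu_1$, Inequality~\eqref{est.general} then becomes, for $\ci\in(0,1)$,
\[
\mu_\ci \geq \bigl(x-\ci^2(x-\ci)\bigr)\mu_1 = \bigl((1-\ci^2)x+\ci^3\bigr)\mu_1 ,
\]
and this persists at $\ci=0,1$ since there both sides reduce to $\mu_0$ and $\mu_1$ respectively. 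Because $1-\ci^2\geq0$ on $[0,1]$ and $x\geq\ga$, I would then pass to the clean lower bound $\mu_\ci\geq h(\ci)\mu_1$, where $h(\ci)\definedas(1-\ci^2)\ga+\ci^3=\ga-\ga\ci^2+\ci^3$.

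It then remains to minimize $h$ over $[0,1]$. From $h'(\ci)=\ci(3\ci-2\ga)$ the only interior critical point is $\ci=\tfrac23\ga$; this genuinely lies in $(0,1)$ because $\ga\leq\mu_0/\mu_1<1$, a fact already used in the proof of Corollary~\ref{cor52}. Comparing the three candidate values $h(0)=\ga$, $h(1)=1$, and $h(\tfrac23\ga)=\ga-\tfrac{4}{27}\ga^3$, and noting that $h$ decreases on $(0,\tfrac23\ga)$ and increases on $(\tfrac23\ga,1)$, the interior value is the minimum, whence $\inf_{\ci\in[0,1]}\mu_\ci\geq(\ga-\tfrac{4}{27}\ga^3)\mu_1$. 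I do not expect a genuine obstacle here: once \eqref{est.general} is specialized, the statement is a calculus exercise. The only points worth a careful line are the cancellation in the coefficient of \eqref{est.general} (which uses $w\geq2$ so that $w(w-1)\neq0$), the fact that $\ga<1$ keeps the critical point inside $[0,1]$, and the verification that the cubic attains its minimum at the interior critical point rather than at an endpoint.
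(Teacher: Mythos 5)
Your proposal is correct and follows essentially the same route as the paper: specialize \eqref{est.general} at $v=w$ (so $\ci^{2w/n}=\ci$ and the coefficient collapses to $\ci^2$), replace $\mu_0/\mu_1$ by $\ga$, and minimize the resulting cubic $\ga-\ga\ci^2+\ci^3$ on $[0,1]$ at $\ci=\tfrac23\ga$. Your extra checks (endpoints, $\ga<1$ keeping the critical point interior) only make explicit what the paper leaves implicit.
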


\begin{proof}
Using $v=w$ we obtain directly from~\eqref{est.general}: 
\[
\mu_\ci 
\geq 
\left(\left( \ci - \frac{\mu_0}{\mu_1} \right) 
\frac{1}{\ci^{-2}}+ \frac{\mu_0}{\mu_1} \right) \mu_1
= 
\left( \ci^3 - \ci^2 \frac{\mu_0}{\mu_1} + \frac{\mu_0}{\mu_1} \right)\mu_1
\geq 
\left( \ci^3 - \ci^2 \ga + \ga \right)\mu_1
\]
for any $\ga\in(0,\mu_0/\mu_1]$. On the interval $[0,1]$ the right hand 
side attains its minimum in $\ci = \frac{2}{3} \ga$. 
%Let $\ga$ be a positive lower bound for $\frac{\mu_0}{\mu_1}$, 
This yields the statement of the corollary.
%We conclude 
%\[
%\inf_\ci \mu_\ci 
%\geq \left( \ga -\frac{4}{27} \ga^3 \right) \mu_1 
%\]
%if $\ga>0$ is a lower bound for $\mu_0/\mu_1$.
\end{proof}

\begin{example}
For $v=w=2$ Petean and Ruiz \cite[Theorem~1.2]{petean.ruiz:11} have 
derived the bound $\ga=0.68$. This yields
\[
\La_{4,1} \geq 0.63 \mu_1 \geq 38.9.
\]
\end{example}

In the case $v>w$ one can use $\ci^{2w/n} \geq \ci$ which improves 
inequality\eqref{ineq.anal} to  
\begin{equation*}
\mu_\ci
\geq 
\left(
1- \frac{(1 - \ci) \ci^2 v (v-1)}{(1-\ci^2) w(w-1) + \ci^2 v(v-1)}
\right)\mu_0 
\end{equation*}
which again yields better estimates for the right hand side of 
\eqref{est.general}.

Obviously in the case $(v,w)=(4,2)$ the determination of the value $c$ for
which $\mu_c$ is minimal, gives the equation
$5c^3+3c=2$ which has as only real solution 
\[
c = \frac15 \sqrt[3]{25+5\sqrt{30}} + \frac1{\sqrt[3]{25+5\sqrt{30}}}
\approx 0.48108.
\]

\begin{example}
For $(v,w)=(4,2)$ we have derived the bound $\ga=0.56885$, see 
equation~\eqref{ineq.gamma42}. This yields  
\[
\La_{6,3} \geq 0.3788 \mu_1 \geq 36.4
\]
\end{example}

The explicit values deduced from the above corollaries
are summarized in Table~\ref{fig.analytic}.

%%%%%%%%%%%%%%%%%%%%%%%%%%%%%%%%%%%%%%%%%%%%%%%%%%%%%%%%%%%%%%%%%%%%%%%%%
\subsection{Numerical Conclusions}\label{subsec.num}
%%%%%%%%%%%%%%%%%%%%%%%%%%%%%%%%%%%%%%%%%%%%%%%%%%%%%%%%%%%%%%%%%%%%%%%%%

Numerical computations yield better bounds. Such improved bounds are 
important for applications, especially for some particular values, as 
for example the case $v=3$, $w=2$.

Using the procedure ``Minimize'' from the ``Optimization'' package
of the program Maple 13.0 we numerically minimized the right hand side 
of \eqref{est.general}. The results of this calculation provided the
bounds given in the column ``Numeric'' of Table~\ref{fig.analytic}.

%\begin{figure}
%The numeric table!!!
%%\begin{tabular}
%%\end{tabular}
%\caption{The numeric table!!!}\label{fig.numeric}
%\end{figure}

\begin{example}
Assume $v=3$ and $w=2$. In \cite[Theorem~1.4]{petean.ruiz:13} Petean 
and Ruiz have obtained $\mu(\mR^3\times \mS^2)\geq 0.63 \mu(\mS^5)$, 
that is $\mu_0\geq 0.63 \mu_1$. A numerical evaluation of 
\eqref{est.general} yields 
\[
\inf_{\ci\in [-1,1]}\mu_\ci\geq 0.571 \mu_1 > 45.1,
\]
and we conclude that $\La_{5,2} > 45.1$.
\end{example}

\begin{example} \label{ex.num.2}
Assume $v=2$ and $w=7$. In \cite[Theorem~1.6]{petean.ruiz:13} Petean 
and Ruiz have obtained $\mu(\mR^2\times \mS^7)\geq 0.747 \mu(\mS^9)$, 
that is $\mu_0\geq 0.747 \mu_1$. A numerical evaluation of 
\eqref{est.general} yields 
%\[
%\inf_{\ci\in [-1,1]}\mu_\ci\geq 0.739076588197446038 \mu_1 > 109.2930723,
%\]
\[
\inf_{\ci\in [-1,1]}\mu_\ci\geq 0.739 \mu_1 > 109.2,
\]
and we conclude that $\La_{9,1} > 109.2$.
\end{example}

\begin{example} \label{ex.num.3}
Assume $v=2$ and $w=8$. In \cite[Theorem~1.6]{petean.ruiz:13} Petean 
and Ruiz have obtained $\mu(\mR^2\times \mS^8)\geq 0.626 \mu(\mS^{10})$, 
that is $\mu_0\geq 0.626 \mu_1$. A numerical evaluation of 
\eqref{est.general} yields 
%\[
%\inf_{\ci\in [-1,1]}\mu_\ci\geq 0.622295802175962920 \mu_1 > 102.6925378
%\]
\[
\inf_{\ci\in [-1,1]}\mu_\ci\geq 0.622 \mu_1 > 102.6
\]
and we conclude that 
%$\La_{10,1} > 102.6925378$.
$\La_{10,1} > 102.6$.
\end{example}

\begin{example} \label{ex.num.42}
Assume $v=4$ and $w=2$. In~\eqref{ineq.gamma42}
we have seen that $\mu(\mR^4\times \mS^2)\geq 0.56885 \mu(\mS^{6})$, 
that is $\mu_0\geq 0.56885 \mu_1$. A numerical evaluation of 
\eqref{est.general} yields 
%\[
%\inf_{\ci\in [-1,1]}\mu_\ci\geq 0.519097200943746672 \mu_1 > 49.98764(8)
% attained for $\ci= 0.248664556665240344$.
%\]
\[
\inf_{\ci\in [-1,1]}\mu_\ci\geq 0.51909 \mu_1 > 49.98
\]
and we conclude that 
%$\La_{10,1} > 102.6925378$.
$\La_{10,1} > 102.6$.
\end{example}

Similar bounds for other dimensions could also be obtained using the 
same method. We will see that the cases derived as examples above have 
interesting topological applications.

%%%%%%%%%%%%%%%%%%%%%%%%%%%%%%%%%%%
\subsection{Bibliographic remark}
%%%%%%%%%%%%%%%%%%%%%%%%%%%%%%%%%%%%
At the time when this article went into press, there was important progress
connected to the Yamabe constant $\mu_c=\mu(\mM_c)$: Solutions of the 
Yamabe equation on $\mM_c$ which are constant on the sphere component, 
were studied systematically 
in \cite{henry.petean:p13}. 

%%%%%%%%%%%%%%%%%%%%%%%%%%%%%%%%%%%%%%%%%%%%%%%%%%%%%%%%%%%%%%%%%%%%%%%%%
\section{Topological applications}
\label{section_top_appl}
%%%%%%%%%%%%%%%%%%%%%%%%%%%%%%%%%%%%%%%%%%%%%%%%%%%%%%%%%%%%%%%%%%%%%%%%%

The lower bounds for $\La_{n,1}$, $n\in\{4,5,9,10\}$, 
and $\La_{5,2}$ and $\La_{6,3}$
lead to estimates of the Yamabe invariant for certain 
classes of manifolds.

%%%%%%%%%%%%%%%%%%%%%%%%%%%%%%%%%%%%%%%%%%%%%%%%%%%%%%%%%%%%%%%%%%%%%%%%%
\subsection{Applications of the lower bound for $\La_{5,2}$}
%%%%%%%%%%%%%%%%%%%%%%%%%%%%%%%%%%%%%%%%%%%%%%%%%%%%%%%%%%%%%%%%%%%%%%%%%

The following two propositions are standard consequences of the methods 
developed for the proof of the h-cobordism theorem. A proof for a similar
statement can be found in 
\cite[Theorem IV.4.4, pages 299--300]{lawson.michelsohn:89}. As we do not 
know of a reference for the formulations given here we include their proofs.

\begin{proposition} \label{prop.spin}
Let $M_0$ and $M_1$ be non-empty, compact, connected, and simply connected 
spin manifolds of dimension $n\geq 5$. Assume that $M_0$ and $M_1$ are 
spin bordant. Then one can obtain $M_1$ from $M_0$ by a sequence of 
surgeries of dimensions $\ell$ where $2 \leq \ell \leq n-3$. 
\end{proposition}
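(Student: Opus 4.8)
The plan is to use the standard handle-trading technique from surgery theory. Since $M_0$ and $M_1$ are spin bordant, there is a compact spin bordism $(W^{n+1}; M_0, M_1)$ with $\partial W = M_0 \sqcup M_1$. Passing from $M_0$ to $M_1$ by surgeries is equivalent to finding such a $W$ which, relative to $M_0 \times [0,1]$, admits a handle decomposition using only handles of index $\ell$ with $2 \le \ell \le n-2$: attaching an $\ell$-handle to $M_0 \times [0,1]$ along its top boundary realizes an $(\ell-1)$-surgery on $M_0$, so index range $3 \le \ell \le n-2$ gives surgeries of dimension $2 \le \ell-1 \le n-3$. Hence it suffices to modify $W$ so that it has a handle decomposition (built on $M_0 \times [0,1]$) with handles only in the middle index range $[3, n-2]$.

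First I would put a handle decomposition on $W$ relative to $M_0$ and eliminate the low-index handles of index $0$ and $1$ by the usual trick: handles of index $0$ can be cancelled since $W$ is connected (they are cancelled against $1$-handles, or absorbed into $M_0 \times [0,1]$ because $M_0$ is connected and $n+1 \ge 6$), and handles of index $1$ can be traded for handles of index $3$ using that $M_0$ and $W$ are simply connected — this is the standard low-dimensional handle-trading argument (cf. \cite[Theorem IV.4.4]{lawson.michelsohn:89}), which requires $n+1 \ge 6$, i.e. $n \ge 5$, exactly our hypothesis. Then I would handle index $2$: a $2$-handle corresponds to a $1$-surgery (killing an element of $\pi_1$ of the trace), and since everything in sight is simply connected the attaching circle is null-homotopic; here one uses the spin hypothesis to guarantee that the normal framing needed to trade the $2$-handle for a higher-index handle exists — a circle in a simply connected spin manifold of dimension $\ge 5$ has trivial normal bundle with a preferred framing, so the $2$-handle can be traded for a $4$-handle. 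Dually, applying the same argument to $W$ read upside-down from $M_1$ kills handles of index $n+1$, $n$, and $n-1$, leaving only handles of index $\ell$ with $3 \le \ell \le n-2$.

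The main obstacle is the index-$2$ (equivalently, by duality, index-$(n-1)$) handle trading, which is precisely the step where the spin hypothesis is essential: without it the relevant framing obstruction in $\pi_1(SO) = \mathbb{Z}/2$ can be nonzero and the $2$-handle cannot be removed. I would need to check carefully that the spin structures on $M_0$, $M_1$ and the bordism $W$ are compatible and that they trivialize this obstruction uniformly — this is where the argument genuinely differs from the non-spin case and from the reference, which is why a separate write-up is warranted. The remaining bookkeeping (ordering handles by index, realizing handle attachments as surgeries on the boundary, checking that each intermediate manifold stays connected and simply connected so that the next trade is legal) is routine but should be spelled out to confirm that the surgery dimensions indeed all lie in the asserted range $2 \le \ell \le n-3$.
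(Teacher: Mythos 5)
Your overall frame (replace $W$ by a relative handle decomposition built on $M_0\times[0,1]$ with handles only in indices $3,\dots,n-2$, low indices removed by trading, high indices by reading $W$ upside down from $M_1$) is the same as the paper's, but the crucial step is not correct as you describe it, and it is exactly the step where the spin hypothesis does its work. Eliminating the $2$-handles is not a matter of re-choosing the handle decomposition: if $H_2(W,M_0)\neq 0$ --- which is perfectly possible even though $M_0$, $M_1$ and $W$ are simply connected, e.g.\ $W=(M_0\times[0,1])\natural(S^2\times D^{n-1})$, whose single $2$-handle is attached along a null-isotopic, standardly framed circle --- then no trading whatsoever can produce a decomposition without $2$-handles, because the handle counts compute $H_*(W,M_0)$. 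So your argument ``the attaching circle is null-homotopic and spin provides the preferred framing, hence the $2$-handle can be traded for a $4$-handle'' fails; moreover the framing of circles is not where spin enters (circles in an orientable manifold always have trivial normal bundle). What is actually needed is to change $W$ itself: perform surgeries on embedded $2$-spheres in the interior of $W$ to make $\pi_2(W)=0$, so that $W$ becomes $2$-connected. This is where spin is used: $w_2(W)=0$ guarantees that the normal bundle of an embedded $2$-sphere (of rank $n-1\geq 3$, classified by $\pi_1(SO(n-1))=\mathbb{Z}/2$ through $w_2$) is trivial, so these surgeries can be performed. Only after this modification does one have $H_i(W,M_j)=0$ for $i\leq 2$, $j=0,1$, and only then can the low-index handles be removed.

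From that point the paper does not run a second handle-trading from the $M_1$ side as you propose; it invokes the handle presentation theorem \cite[VIII Thm.~4.1]{kosinski:93} with $k=3$, $m=n+1$, which yields a presentation in which the number of $i$-handles for $i<3$ and $i>n-2$ equals $b_i(W,M_0)$, and then disposes of the top indices purely homologically: Lefschetz duality gives $H^{n+1-i}(W,M_0)\cong H_{n+1-i}(W,M_1)^{\ast}$-type identifications with $H_j(W,M_1)=0$ for $j\leq 2$, and the universal coefficient theorem identifies the free parts of $H^i(W,M_0)$ and $H_i(W,M_0)$, so $b_i(W,M_0)=0$ for $i\in\{n-1,n,n+1\}$. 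Your upside-down trading could be made to work as an alternative to this duality step, but only after the interior surgery on $2$-spheres; as written, your proposal omits the elimination of $\pi_2(W)$ (equivalently of $H_2(W,M_0)$ and $H_2(W,M_1)$), which is the mathematical content of the proposition, and misattributes the role of the spin hypothesis to a circle-framing obstruction.
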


\begin{proof}
Let $W$ be a spin bordism from $M_0$ to $M_1$. By surgeries in the interior 
we simplify $W$ to be connected, simply connected, and have $\pi_2(W) = 0$ 
(one then says $W$ is $2$-connected). Then $H_i(W,M_j) = 0$ for $i=0,1,2$. 
We apply \cite[VIII Thm.~4.1]{kosinski:93} for $k=3$ and $m=n+1$. One 
obtains that there is a handle presentation of the bordism such that for 
any $i < 3$ and any $i>n-2$ the number of $i$-handles is given by 
$b_i(W,M_0)$. Any $i$-handle corresponds to a surgery of dimension $i-1$. 
It remains to show that $b_i(W,M_0)=0$ for $i \in \{0,1,2,n+1,n,n-1\}$. 
This is trivial for $i\in \{0,1,2\}$. By Poincar\'e duality 
$H^{n+1-i}(W,M_0)$ is dual to $H_i(W,M_1)$ which vanishes for $i=0,1,2$. 
On the other hand the universal coefficient theorem tells us that the 
free parts of $H^i(W,M_0)$ and $H_i(W,M_0)$ are isomorphic. Thus 
$b_i(W,M_0)$ which is by definition the rank of (the free part of) 
$H_i(W,M_0)$ vanishes for $i \in \{n+1,n,n-1\}$.
\end{proof}

\begin{proposition} \label{prop.non-spin}
Let $M_0$ and $M_1$ be non-empty compact connected and simply connected 
non-spin manifolds of dimension $n\geq 5$, and assume that these manifolds 
are oriented bordant. Then one can obtain $M_1$ from $M_0$ by a sequence 
of surgeries of dimensions $\ell$, $2 \leq \ell \leq n-3$. 
\end{proposition}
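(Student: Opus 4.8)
The plan is to mirror the proof of Proposition~\ref{prop.spin}, replacing the spin bordism by an oriented bordism and keeping track of the fact that we no longer have a spin structure to help us. Let $W$ be an oriented bordism from $M_0$ to $M_1$. First I would perform surgeries in the interior of $W$ to make $W$ connected, simply connected, and $2$-connected; since $n+1\geq 6$, surgery below the middle dimension is unobstructed for oriented manifolds (we do not need a spin structure for this low-dimensional handle trading), so this step goes through exactly as before. After this reduction we have $H_i(W,M_j)=0$ for $i=0,1,2$ and $j=0,1$.

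Next I would invoke the handle-cancellation result \cite[VIII Thm.~4.1]{kosinski:93} with $k=3$ and $m=n+1$, exactly as in the spin case: this gives a handle presentation of the bordism in which, for every $i<3$ and every $i>n-2$, the number of $i$-handles equals $b_i(W,M_0)$. Each $i$-handle corresponds to a surgery of dimension $i-1$, so to conclude that all surgeries have dimension $\ell$ with $2\leq\ell\leq n-3$ it suffices to check $b_i(W,M_0)=0$ for $i\in\{0,1,2,n-1,n,n+1\}$. The vanishing for $i\in\{0,1,2\}$ is immediate from $2$-connectivity of the pair; for $i\in\{n-1,n,n+1\}$ one uses Poincaré--Lefschetz duality $H^{n+1-i}(W,M_0)\cong H_i(W,M_1)$, which vanishes for $i=0,1,2$, together with the universal coefficient theorem identifying the free parts of $H^i(W,M_0)$ and $H_i(W,M_0)$. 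This is the same bookkeeping as in Proposition~\ref{prop.spin} and uses only orientability, not a spin structure.

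The one genuinely new point — and the step I expect to be the main obstacle — is ensuring that the intermediate manifolds appearing along the sequence of surgeries can be kept \emph{simply connected and non-spin}, so that the hypotheses of the surgery formula (Theorem~\ref{thm_surg}) stay in force and, more importantly, so that the statement as phrased (``obtain $M_1$ from $M_0$'') is literally about a chain of surgeries between the two given non-spin manifolds. Simple-connectedness of the intermediate levels follows because we only do surgeries in dimensions $\geq 2$, hence of codimension $\geq 3$ in $W$, which do not change $\pi_1$; I would spell this out. For the non-spin property: $w_2$ of an intermediate level is detected by restriction from $W$, and since $M_0$ and $M_1$ are non-spin while $H^2$ behaves controllably under these surgeries, one checks that $w_2$ cannot die at any intermediate stage — here one should argue that a surgery of dimension $\ell\geq 2$ on a simply connected manifold either leaves $w_2\neq 0$ or, in the one potentially dangerous case, can be reordered or traded so that the non-spin level persists. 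I would handle this by noting that the first surgery in the sequence can be chosen to be $2$-dimensional and to change $M_0$ within its oriented bordism class to a manifold that is still non-spin (e.g. by a connected-sum bookkeeping argument, or by observing that $w_2(M_0)$ pulls back from $W$ along the whole bordism), and that all subsequent levels, being obtained by surgeries away from a $2$-cycle carrying $w_2$, remain non-spin. The rest is then identical to the spin case.
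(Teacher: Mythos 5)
Your opening step is where the argument breaks down: you claim that, just as in the spin case, $W$ can be surgered in its interior to be $2$-connected, asserting that "surgery below the middle dimension is unobstructed for oriented manifolds." This is false precisely in the situation of the proposition. To kill a class in $\pi_2(W)$ one needs an embedded $2$-sphere representing it with \emph{trivial} normal bundle; for a simply connected $(n+1)$-manifold the normal bundle of such a sphere is classified by $\pi_1(SO(n-1))\cong\mZ/2\mZ$ and is detected by the evaluation of $w_2(W)$ on the class. Hence only classes in the kernel of $w_2$ can be surgered away. Since $M_0$ and $M_1$ are non-spin and have trivial normal bundle in $W$, we have $w_2(M_j)=w_2(W)|_{M_j}\neq 0$, so $w_2(W)\neq 0$ and $\pi_2(W)$ can never be killed completely; indeed, if $\pi_2(W)=0$ with $\pi_1(W)=0$ then $H^2(W;\mZ/2\mZ)=0$, forcing $w_2(W)=0$ and thus $M_j$ spin, a contradiction. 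This is exactly why the paper's proof does \emph{not} make $W$ $2$-connected: it only reduces $W$ to be connected, simply connected, with $\pi_2(W)\cong\mZ/2\mZ$ and the maps $\pi_2(M_j)\to\pi_2(W)$ surjective. That weaker conclusion still gives $H_i(W,M_j)=0$ for $i=0,1,2$, which is all the handle-trading argument (Kosinski, VIII Thm.~4.1, plus the Poincar\'e duality and universal coefficient bookkeeping you correctly reproduce) actually needs; from there the proof runs exactly as in the spin case.

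Your second concern, that the intermediate manifolds in the surgery sequence must be kept simply connected and non-spin, is a red herring: the proposition (and its use via Theorem~\ref{thm_surg}) only requires that each surgery have dimension $\ell$ with $2\leq\ell\leq n-3$; no hypothesis on the intermediate levels is needed, so the "connected-sum bookkeeping" you sketch there is superfluous. The genuine missing idea is the correct replacement for $2$-connectivity of $W$ in the non-spin setting, as described above.
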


\begin{proof}
The proof is similar to the proof in the spin case. However the bordism 
$W$ cannot be simplified to $\pi_2(W) = 0$, but only to 
$\pi_2(W)=\mZ/2\mZ$ with surjective maps $\pi_2(M_j)\to \pi_2(W)$. This 
implies again that $H_i(W,M_j) = 0$ for $i=0,1,2$, and $j=1,2$. The proof 
continues exactly as in the spin case.
\end{proof}

\begin{corollary}\label{cor.dim.5}
Let $M$ be a compact simply connected manifold of dimension $5$, then
\[
45.1 < \si(M) \leq \mu(\mS^5) < 79.
\]
\end{corollary}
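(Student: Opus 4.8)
The plan is to combine the surgery bound from Theorem \ref{thm_surg} with the explicit analytic estimate $\La_{5,2} > 45.1$ (obtained in the example following Corollary \ref{cor52} from the Petean--Ruiz input $\mu_0 \geq 0.75\,\mu_1$) and with a bordism-theoretic classification of simply connected $5$-manifolds. First I would recall that a compact simply connected $5$-manifold is automatically spin or non-spin according to whether $w_2$ vanishes, and that the oriented bordism group $\Omega_5^{SO}$ and the spin bordism group $\Omega_5^{Spin}$ both vanish (equivalently, every such $M$ bounds). Hence $M$ is spin bordant (respectively oriented bordant) to $S^5$. Applying Proposition \ref{prop.spin} in the spin case and Proposition \ref{prop.non-spin} in the non-spin case, I can pass from $S^5$ to $M$ by a finite sequence of surgeries of dimension $\ell$ with $2 \leq \ell \leq n-3 = 2$, i.e.\ purely $2$-dimensional surgeries. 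Note $n=5$, so here the codimension of each surgery is $n-\ell = 3$, which is exactly the borderline codimension allowed, and $k=\ell=2$ throughout.

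The second step is to run the surgery monotonicity of Theorem \ref{thm_surg} along this chain. Starting from $\si(S^5) = \mu(\mS^5)$ and applying the theorem once for each of the finitely many $2$-surgeries, I obtain at each stage $\si(N') \geq \min(\La_{5,2}, \si(N))$, and by induction on the length of the chain $\si(M) \geq \min(\La_{5,2}, \si(S^5))$. Since $\si(S^5) = \mu(\mS^5) \approx 78.996 > 45.1$ and $\La_{5,2} > 45.1$ by the analytic estimate recorded in Table \ref{fig.analytic}, the minimum is $\La_{5,2}$, so $\si(M) \geq \La_{5,2} > 45.1$. The upper bound $\si(M) \leq \mu(\mS^5) < 79$ is immediate from Aubin's inequality $\si(M) \leq \si(S^n) = n(n-1)\om_n^{2/n}$ together with the numerical value $\mu(\mS^5) = 78.996\ldots < 79$.

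The main obstacle is organizational rather than deep: one must check that every manifold appearing in the surgery chain is still compact of dimension $5$ so that Theorem \ref{thm_surg} applies at each step (this is automatic, since surgery preserves dimension and compactness), and that the intermediate manifolds' Yamabe invariants are not needed explicitly — only the uniform constant $\La_{5,2}$ enters. A small subtlety worth spelling out is that Propositions \ref{prop.spin} and \ref{prop.non-spin} require $M_0$ and $M_1$ to be \emph{both} spin or \emph{both} non-spin and simply connected; since $M$ and $S^5$ are simply connected and $S^5$ is spin, one uses Proposition \ref{prop.spin} when $M$ is spin and, when $M$ is non-spin, one replaces the reference model $S^5$ by any fixed simply connected non-spin $5$-manifold — e.g.\ the nontrivial $S^3$-bundle over $S^2$, which is oriented null-bordant — and applies Proposition \ref{prop.non-spin}; since such a model has positive scalar curvature, its Yamabe invariant is positive and in fact one still has $\si(\text{model}) \geq \La_{5,2}$ by the same argument applied to it, so the bound propagates. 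In either case the conclusion $45.1 < \si(M) \leq \mu(\mS^5) < 79$ follows, which is the assertion $\cS_5(1) \subset (45.1, \si(S^5)]$.
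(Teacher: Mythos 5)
Your spin case is exactly the paper's argument, and the upper bound is fine, but the non-spin case contains a genuine mathematical error: you assert that $\Omega_5^{\rm SO}=0$, whereas in fact $\Omega_5^{\rm SO}\cong\mZ/2\mZ$, generated by the Wu manifold $\SU(3)/\SO(3)$ (detected by the Stiefel--Whitney number $w_2w_3$). Consequently a simply connected non-spin $5$-manifold $M$ need \emph{not} bound: it may be oriented bordant to $\SU(3)/\SO(3)$, and this case is entirely missing from your proof, since Proposition~\ref{prop.non-spin} only connects manifolds that are oriented bordant to each other. The paper handles it by taking $M_0=\SU(3)/\SO(3)$ itself as the reference, using Obata's theorem to compute $\si(\SU(3)/\SO(3))>64$ (Appendix~\ref{app.wu.mfd}), and then concluding $\si(M)\geq\min(\La_{5,2},\si(\SU(3)/\SO(3)))>45.1$. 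Moreover, in the null-bordant non-spin case your claim that the reference model (the nontrivial $S^3$-bundle over $S^2$) satisfies $\si(\text{model})\geq\La_{5,2}$ ``by the same argument applied to it'' is circular as written: the same argument would again require a non-spin null-bordant reference with a known bound, and positivity of scalar curvature only gives $\si>0$, not $>45.1$. This step can be repaired --- that bundle is obtained from $S^5$ by a single $1$-dimensional surgery (the two framings of an unknotted circle give the two $S^3$-bundles over $S^2$), so $\si(\text{model})\geq\min(\La_{5,1},\si(S^5))\geq 51.2$ --- but you must say so; the paper instead uses $M_0=\SU(3)/\SO(3)\,\#\,\SU(3)/\SO(3)$ together with Kobayashi's connected-sum theorem, again reducing to $\si(\SU(3)/\SO(3))>64$.

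A smaller but real inaccuracy: the bound $\La_{5,2}>45.1$ is \emph{not} the analytic estimate of Corollary~\ref{cor52}, and it does not come from the input $\mu_0\geq 0.75\,\mu_1$ (that input concerns $\mR^2\times\mS^3$, i.e.\ $\La_{5,1}\geq 51.2$). For $(v,w)=(3,2)$ the relevant input is $\mu_0\geq 0.63\,\mu_1$, the analytic estimate gives only $29.7$, and $45.1$ is the numerical evaluation of \eqref{est.general} from Subsection~\ref{subsec.num} (the ``Numeric'' column of Table~\ref{fig.analytic}). The inductive use of Theorem~\ref{thm_surg} along the chain of $2$-surgeries and the upper bound via Aubin are correct and agree with the paper.
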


\begin{proof}
The upper bound for $\si(M)$ is standard.

To prove the lower bound we consider first the case when $M$ is spin.
As the $5$-dimensional spin bordism group $\Omega_5^{\rm Spin}$ is trivial,
$M$ is the boundary of a compact $6$-dimensional spin manifold. By 
removing a ball we obtain a spin bordism from $S^5$ to $M$. Using 
Proposition~\ref{prop.spin} we see that $M$ can be obtained by 
$2$-dimensional surgeries from $S^5$. As a consequence 
$\si(M) \geq \La_{5,2} > 45.1$. 

Next we consider the case when $M$ is not spin. The oriented bordism 
group $\Omega_5^{\rm SO}$ is isomorphic to $\mZ/2\mZ$, and the Wu manifold 
$\SU(3)/\SO(3)$ represents a non-trivial element in $\Omega_5^{\rm \SO}$. 
Thus $M$ is either oriented bordant to the empty set or to $\SU(3)/\SO(3)$.

We consider now the case that $M$ is oriented bordant to $\SU(3)/\SO(3)$.
By Appendix~\ref{app.wu.mfd} we see that $\si(\SU(3)/\SO(3)) > 64$. 
Since $\SU(3)/\SO(3)$ is not spin Proposition~\ref{prop.non-spin} implies 
that we can obtain $M$ from $\SU(3)/\SO(3)$ by a finite number of 
$2$-dimensional surgeries. Thus 
\[
\si(M) \geq \min \left( \La_{5,2}, \si(\SU(3)/\SO(3)) \right) > 45.1.
\]

It remains to consider the case that $M$ is oriented bordant to the 
empty set, or equivalently to $S^5$. However, $S^5$ is spin and cannot 
be used to apply Proposition~\ref{prop.non-spin}. Instead we use the space
$\SU(3)/\SO(3)\# \SU(3)/\SO(3)$ which is simply connected, non-spin 
and an oriented boundary. By \cite[Theorem~2]{kobayashi:87} we know 
that $\si(\SU(3)/\SO(3) \# \SU(3)/\SO(3)) \geq \si(\SU(3)/\SO(3))$. 
We apply Proposition~\ref{prop.non-spin} with
$M_0 = \SU(3)/\SO(3) \# \SU(3)/\SO(3)$ and $M_1 = M$ and thus we obtain
$M$ from $M_0$ by a finite number of $2$-dimensional surgeries. From 
this we find
\[
\si(M) \geq \min \left(\La_{5,2}, \si(\SU(3)/\SO(3))\right) > 45.1
\]
which concludes the proof of the corollary.
\end{proof}

Let us compare the lower bound $45.1$ for simply connected 
$5$-manifolds to the expected values for the smooth Yamabe invariant on 
non-simply-connected spherical space forms in dimension $5$. Assume that 
$M = S^5 / \Gamma$ where the finite group $\Gamma \subset \SO(6)$ 
acts freely on $S^5$. It was conjectured by 
Schoen~\cite[Page 10, lines 6--11]{schoen:89} that on such manifolds 
the supremum in the definition of the smooth Yamabe number is attained 
by the standard conformal structure. If this is true, then $\si(\mR P^5)$ 
would be equal to $45.371\dots$. Except $S^5$ and $\mR P^5$ all 
$5$-dimensional space forms would have $\si$-invariant below $45.1$.

%%%%%%%%%%%%%%%%%%%%%%%%%%%%%%%%%%%%%%%%%%%%%%%%%%%%%%%%%%%%%%%%%%%%%%%%%
\subsection{Applications of the lower bound for $\La_{6,3}$}
%%%%%%%%%%%%%%%%%%%%%%%%%%%%%%%%%%%%%%%%%%%%%%%%%%%%%%%%%%%%%%%%%%%%%%%%%

\begin{corollary}\label{cor.dim.6}
Let $M$ be a compact simply connected manifold of dimension $6$, then
\[
49.9 < \si(M) \leq \mu(\mS^6) < 96.30.
\]
\end{corollary}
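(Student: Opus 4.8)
The plan is to mimic the proof of Corollary~\ref{cor.dim.5}, feeding in the $6$-dimensional bordism data in place of the $5$-dimensional data and using the bounds for $\La_{6,3}$ and $\La_{6,2}$ that are now available. The upper bound $\si(M)\le\mu(\mS^6)=30\,\om_6^{1/3}<96.30$ is Aubin's inequality. For the lower bound I would split into the cases $M$ spin and $M$ non-spin and, in each, produce a ``reference'' compact simply connected $6$-manifold $M_0$ which is bordant to $M$ in the appropriate bordism theory and for which $\si(M_0)>49.9$ is known; an h-cobordism-type surgery decomposition together with repeated application of Theorem~\ref{thm_surg} then finishes the estimate.

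If $M$ is spin I would take $M_0=\mS^6$. Since $\Om_6^{\mathrm{Spin}}=0$, $M$ bounds a compact spin $7$-manifold $W$, and removing an open ball from $W$ yields a spin bordism from $\mS^6$ to $M$; Proposition~\ref{prop.spin} (which applies as $n=6\ge5$) then writes $M$ as the result of finitely many surgeries of dimensions $\ell$ with $2\le\ell\le n-3=3$, so $\si(M)\ge\min(\La_{6,2},\La_{6,3},\si(\mS^6))$. If $M$ is not spin I would instead take $M_0=\mC P^2\times\mS^2$, which is simply connected, non-spin (as $w_2(\mC P^2)\ne0$), and oriented null-bordant because $\Om_6^{\mathrm{SO}}=0$; since $M$ is likewise oriented null-bordant, $M$ and $M_0$ are oriented bordant, so Proposition~\ref{prop.non-spin} expresses $M$ as a sequence of surgeries of dimensions $\ell\in\{2,3\}$ starting from $M_0$, and $\si(M)\ge\min(\La_{6,2},\La_{6,3},\si(\mC P^2\times\mS^2))$.

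It remains to check that $\La_{6,3}$, $\La_{6,2}$, and $\si(\mC P^2\times\mS^2)$ all exceed $49.9$. For $\La_{6,3}$ this is the numerical bound of Example~\ref{ex.num.42} (the pair $(v,w)=(4,2)$; see also Table~\ref{fig.analytic}). For $\La_{6,2}$, which corresponds to $(v,w)=(3,3)$ and hence to Case~\eqref{casethree}, I would insert the explicit lower bound for $\mu(\mR^3\times\mS^3)$ from \cite{ammann.dahl.humbert:13} into Corollary~\ref{cor54} with $v=w=3$; this gives $\La_{6,2}>49.9$. For $\si(\mC P^2\times\mS^2)$ I would rescale the Fubini--Study metric on $\mC P^2$ and a round metric on $\mS^2$ to have a common Einstein constant; the product metric is then Einstein with positive scalar curvature, so by Obata's theorem its conformal class realises its own Yamabe functional, and a short computation gives $\si(\mC P^2\times\mS^2)\ge 6\sqrt[3]{72}\,\pi>78$. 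Combining this with the two displays of the previous paragraph yields $\si(M)>49.9$ in both cases.

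The surgery- and bordism-theoretic part is routine once Propositions~\ref{prop.spin} and~\ref{prop.non-spin} and the vanishing of $\Om_6^{\mathrm{SO}}$ and $\Om_6^{\mathrm{Spin}}$ are granted; I expect the only delicate point to be the numerical bookkeeping, and within it the bound $\La_{6,2}>49.9$, which is not self-contained but depends on importing the estimate for $\mu(\mR^3\times\mS^3)$ from the earlier paper \cite{ammann.dahl.humbert:13}.
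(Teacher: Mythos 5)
Your proposal is correct and follows essentially the paper's own route: the paper proves Corollary~\ref{cor.dim.6} in one line as a ``straightforward adaptation'' of the proof of Corollary~\ref{cor.dim.5}, using $\Omega_6^{\rm Spin}=\Omega_6^{\rm SO}=0$ and concluding $\si(M)\geq\min(\La_{6,2},\La_{6,3})\geq 49.9$, and what you write is exactly that adaptation with the omitted details filled in. Your fill-ins check out: in the spin case $S^6$ serves as reference just as in dimension $5$; in the non-spin case the paper names no reference manifold, and your choice $\mC P^2\times\mS^2$ (simply connected, non-spin, oriented null-bordant since $\Omega_6^{\rm SO}=0$) with the Obata computation $\mu=6\,(72\pi^3)^{1/3}\approx 78.4>49.9$ is a legitimate stand-in for the Wu-manifold step of the $5$-dimensional proof. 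The only point where you deviate from the paper's intended source of a constant is $\La_{6,2}$: since $(v,w)=(3,3)$ is Case~(\ref{casethree}), the paper takes the uniform-in-$\ci$ bound directly from \cite[Corollary~3.3]{ammann.dahl.humbert:13} (which is why $(3,3)$ does not appear in Table~\ref{fig.analytic}), whereas you feed the bound $\mu(\mR^3\times\mS^3)\geq\tfrac{30}{24}\,\mu(\mS^3)\approx 0.57\,\mu(\mS^6)$ from that paper into Corollary~\ref{cor54}; this yields roughly $0.54\,\mu(\mS^6)\approx 52>49.9$, slightly weaker than the direct uniform bound but sufficient, so your route is valid. Implicit in both routes, as in the paper's framework, are the inequalities $\La^{(2)}_{6,2}\geq\La^{(1)}_{6,2}$ from \cite{ammann.dahl.humbert:p11b} and, for $\La_{6,3}$, Proposition~\ref{prop_nk=63} of Appendix~\ref{nk=63}, which your citation of Example~\ref{ex.num.42} and Table~\ref{fig.analytic} presupposes.
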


\begin{proof}
The proof of this corollary is a straightforward adaptation of the proof of 
previous corollary, using the fact that both the spin bordism group and the
oriented bordism group are trivial in dimension $6$.
We obtain
\[
\si(M) \geq \min(\La_{6,2},\La_{6,3}) \geq 49.9.
\]
\end{proof}

%%%%%%%%%%%%%%%%%%%%%%%%%%%%%%%%%%%%%%%%%%%%%%%%%%%%%%%%%%%%%%%%%%%%%%%%%
\subsection{Applications of the lower bound for $\La_{9,1}$ 
and $\La_{10,1}$ to spin manifolds}
%%%%%%%%%%%%%%%%%%%%%%%%%%%%%%%%%%%%%%%%%%%%%%%%%%%%%%%%%%%%%%%%%%%%%%%%%

For a compact spin manifold $M$ of dimension $n$ the alpha-genus 
$\alpha(M) \in KO_n$ is equal to the index of the Clifford-linear Dirac 
operator on $M$. It depends only on the spin bordism class of $M$.

\begin{lemma} \label{lemma9.10}
Let $M$ be a compact $2$-connected spin manifold of dimension 
$n \in \{9,10\}$ which has $\al(M) = 0$. Then $M$ is obtained from $S^9$ 
or $\mH P^2 \times S^1$ (for $n=9$) or from $S^{10}$ or 
$\mH P^2\times S^1 \times S^1$ (for $n=10$) by a sequence of surgeries 
of dimensions $k \in \{0,1,\ldots,n-4\}$. All these surgeries are 
compatible with orientation and spin structure.
\end{lemma}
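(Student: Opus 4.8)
The plan is to run the same bordism-theoretic machine as in Propositions~\ref{prop.spin} and~\ref{prop.non-spin}, but now in the $2$-connected spin category, which forces us first to identify the spin bordism groups $\Omega_9^{\mathrm{Spin}}$ and $\Omega_{10}^{\mathrm{Spin}}$ and to pin down explicit generators realizable by $2$-connected manifolds. First I would recall that the map $\alpha\colon \Omega_n^{\mathrm{Spin}}\to KO_n$ is a ring homomorphism, and that in dimensions $9$ and $10$ the relevant bordism groups are small: $\Omega_9^{\mathrm{Spin}}\cong (\mZ/2)^2$ and $\Omega_{10}^{\mathrm{Spin}}\cong (\mZ/2)^3$, while $KO_9\cong \mZ/2$ and $KO_{10}\cong \mZ/2$. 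One checks that the kernel of $\alpha$ in each dimension is generated by classes represented by the stated model manifolds: $\mH P^2\times S^1$ (and its product with another $S^1$ in dimension $10$) together with, in dimension $10$, a further generator which can be taken to be $\mH P^2\times \Sigma$ for a suitable surface, or an explicit $2$-connected representative; the point is that $\mH P^2$ is $3$-connected, $\alpha(\mH P^2)=0$, and $\mH P^2$ already generates the relevant part of $\Omega_8^{\mathrm{Spin}}$ not detected by $\alpha$. Since $M$ is $2$-connected with $\alpha(M)=0$, its class lies in $\ker\alpha$, hence $M$ is spin bordant to $S^n$ (the zero element) or to one of the listed model manifolds.

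Next I would upgrade this spin bordism to a bordism through $2$-connected manifolds and then apply handle-cancellation. Let $W$ be a spin bordism from the model manifold $M_0$ (one of $S^9$, $\mH P^2\times S^1$, $S^{10}$, $\mH P^2\times S^1\times S^1$) to $M_1 = M$. Exactly as in the proof of Proposition~\ref{prop.spin}, surgery on the interior of $W$ in dimensions $0,1,2,3$ makes $W$ $3$-connected; here one uses that $M_0$ and $M_1$ are both $2$-connected, so the relative homotopy groups $\pi_i(W,M_j)$ can be killed for $i\le 3$, giving $H_i(W,M_j)=0$ for $i\le 3$ and $j=0,1$. Now apply \cite[VIII Thm.~4.1]{kosinski:93} with $k=4$ and $m=n+1$: there is a handle decomposition of $W$ on $M_0$ in which, for $i<4$ and $i>n-3$, the number of $i$-handles equals $b_i(W,M_0)$. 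As in the spin case, $b_i(W,M_0)=0$ for $i\le 3$ since $H_i(W,M_0)=0$ there, and for $i\in\{n-2,n-1,n,n+1\}$ by Poincar\'e–Lefschetz duality ($H^{n+1-i}(W,M_0)\cong H_i(W,M_1)$, which vanishes for $i\le 3$) together with the universal coefficient theorem identifying free ranks of cohomology and homology. Hence $W$ has handles only in dimensions $4,\dots,n-3$, and each $i$-handle corresponds to a surgery of dimension $i-1\in\{3,\dots,n-4\}$; allowing also the low-dimensional handles used to connect and simplify (which correspond to surgeries of dimensions $0,1,\dots$), we obtain $M$ from $M_0$ by surgeries of dimensions $k\in\{0,1,\dots,n-4\}$. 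Compatibility with orientation and spin structure is automatic because all surgeries are performed along embedded spheres with trivialized normal bundle inside a spin bordism, so the spin structure extends over each trace.

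The main obstacle is the algebraic-topology input of the first paragraph: one must know $\Omega_9^{\mathrm{Spin}}$ and $\Omega_{10}^{\mathrm{Spin}}$ together with the map to $KO_*$ precisely enough to exhibit \emph{explicit, $2$-connected} generators of $\ker\alpha$. This is where $\mH P^2$ enters — it is the standard generator used to split off the $KO$-theoretically invisible part of low-dimensional spin bordism — and the products $\mH P^2\times S^1$, $\mH P^2\times S^1\times S^1$ are chosen precisely because they are spin, have $\alpha=0$ (by the product formula, since $\alpha(S^1)$ lies in the torsion part killed after one stabilization, or more carefully because the relevant $KO$-classes multiply to zero), and are accessible to the handle argument after the interior surgeries. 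The handle-cancellation part, by contrast, is a routine repetition of Proposition~\ref{prop.spin} with the connectivity index raised by one; the only subtlety is checking that in dimension $10$ a $3$-connectification of $W$ does not run into an obstruction at $\pi_3$, which is handled by the same relative-Hurewicz argument as before since $\pi_3$ of the model manifolds is already trivial. I would therefore devote the bulk of the write-up to carefully listing the bordism generators and verifying their $\alpha$-genus, and treat the surgery argument by explicit reference back to the proof of Proposition~\ref{prop.spin}.
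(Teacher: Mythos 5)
Your overall strategy coincides with the paper's (Anderson--Brown--Peterson input, then a $3$-connected spin bordism and a handle argument), but the bordism-theoretic step for $n=10$ has a genuine gap. Since $\Omega_{10}^{\rm Spin}\cong(\mZ/2)^3$ and $\al$ surjects onto $KO_{10}\cong\mZ/2$, the kernel of $\al$ has order four, so knowing only $[M]\in\ker\al$ does \emph{not} give that $M$ is spin bordant to $\emptyset$ or to $\mH P^2\times S^1\times S^1$ --- yet that is exactly what your last sentence of the first paragraph asserts. Your proposed ``further generator'' $\mH P^2\times\Sigma$ cannot repair this: $\Omega_2^{\rm Spin}\cong\mZ/2$ is generated by the torus with nonbounding spin structures, so $\mH P^2\times\Sigma$ is always spin bordant to $\mH P^2\times S^1\times S^1$ or to $\emptyset$ and yields no new class; and if the remaining elements of $\ker\al$ admitted a $2$-connected representative, the lemma itself would be false for that manifold, since surgeries compatible with the spin structure preserve the spin bordism class. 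The essential input --- that $2$-connectedness together with $\al(M)=0$ forces $[M]\in\{0,[\mH P^2\times S^1\times S^1]\}$, which the paper extracts from the Anderson--Brown--Peterson description of $\Omega_*^{\rm Spin}$ --- is neither correctly stated nor proved in your sketch; you defer it to ``the bulk of the write-up'', so the heart of the first step is missing. (For $n=9$ there is no issue, as $\ker\al\cong\mZ/2$ is generated by $[\mH P^2\times S^1]$.)

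In the handle step you claim that $M_0$ and $M_1$ are both $2$-connected and deduce $H_i(W,M_j)=0$ for $i\le 3$, $j=0,1$. But $M_0=\mH P^2\times S^1$ (resp.\ $\mH P^2\times S^1\times S^1$) is not even simply connected; once $W$ is made $3$-connected the long exact sequence of the pair gives $H_2(W,M_0)\cong H_1(M_0)\neq 0$ (and $H_3(W,M_0)\cong H_2(M_0)\neq 0$ for $n=10$), so $b_i(W,M_0)$ does not vanish in low degrees and your symmetric application of Kosinski's theorem (which is formulated for simply connected data) breaks down. The paper's argument is deliberately one-sided: it only uses that the pair $(W',M)$ is $3$-connected --- this is where the $2$-connectedness of $M$ and the $3$-connectedness of $W'$ enter --- so that $W'$ is obtained from $M\times[0,1]$ by attaching handles of index $\ge 4$ only; read from the $M_0$ end these are precisely surgeries of dimension $\le n-4$, and the low-dimensional surgeries on $M_0$ (needed anyway to kill $\pi_1(M_0)$) are allowed by the statement, which permits all $k\in\{0,1,\ldots,n-4\}$. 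Your closing hedge about ``low-dimensional handles used to connect and simplify'' effectively retreats to this asymmetric argument, but the intermediate claims as written are false and should be replaced by it.
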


Note that $S^1$ carries two spin structure. One spin structure
is obtained from the spin structure on $D^2$ by restriction to the 
boundary $S^1 = \partial D^2$, and it is called the
\emph{bounding spin structure}. In the above lemma we assume that all
manifolds $S^1$ are equipped with the other spin structure, the
\emph{non-bounding spin structure}.

\begin{proof}
%It follows from \cite[Theorem~B]{stolz:92} that a manifold $M$ as above 
%is spin bordant to the total space $P$ of an $\mH P^2$-bundle over a 
%closed spin manifold $B$ of dimension $k=n-8$, and the structure group 
%of the bundle is $\PSp(3)$. The fact that $\Sp(3)$ is connected and 
%simply connected together with the exact sequence 
%$1 \to \Sp(1) \cong S^3 \to \Sp(3) \to \PSp(3) = \Sp(3)/\Sp(1) \to 1$ 
%shows that $\PSp(3)$ is connected and simply connected.
%
%In the case $n=9$, the connectedness of $\PSp(3)$ implies that $P$ can 
%be chosen as a product of $\PSp(3)$ and $B$. Furthermore $B$ is a disjoint 
%union of circles equipped with a spin structure. If any of these circles 
%is a spin-boundary, it can be removed. The disjoint union of two circles 
%with non-bounding spin structure is a spin boundary since it bounds 
%$S^1 \times [0,1]$. We thus can assume $B = \emptyset$ or $B = S^1$ with 
%the non-bounding spin-structure, and $P = \mH P^2 \times B$.
%
%In the case $n=10$, the base $B$ is a surface with spin structure.
%As the structure group is connected and simply connected, $P$ is 
%spin-diffeomorphic to a product. In dimension $2$ there are two 
%spin-bordism classes. The non-trivial class is generated by 
%$S^1 \times S^1$ with non-bounding spin structures on both factors 
%$S^1$. Thus $M$ is bordant to $P = \emptyset$ or to 
%$P = \mH P^2 \times S^1 \times S^1$.
%
From the description of the Spin bordism group in 
\cite{anderson.brown.peterson:66} and 
\cite{anderson.brown.peterson:67} we know that $M$ is spin bordant 
to $P = \emptyset$ or to $P = \mH P^2 \times S^1$ (if $n=9$)
and $M$ is spin bordant to $P = \emptyset$ or to 
$P = \mH P^2 \times S^1 \times S^1$ (if $n=10$).

Now let $W$ be a spin bordism from $P$ to $M$. By performing surgeries 
of dimension $0$, $1$, $2$, and $3$ one can find a spin bordism $W'$ from 
$P$ to $M$ which is $3$-connected, that is $W'$ is connected 
and $\pi_1(W')=\pi_2(W')=\pi_3(W')=0$. The inclusion $i:M\to W$ is thus 
$3$-connected, that is bijective on $\pi_i$ for $i\leq 2$ and surjective 
on $\pi_3$. This implies that $W'$ can be decomposed into handles each of 
which corresponds to a surgery of dimension $\leq n-4$.
\end{proof}

The following corollary extends similar results from 
\cite{ammann.dahl.humbert:p11b} which hold in dimension $n=7$, $n=8$ and 
$n\geq 11$. We define
$s_1 \definedas \si(\mH P^2 \times S^1)$ and
$s_2 \definedas \si(\mH P^2 \times S^1 \times S^1)$.

\begin{corollary}
Let $M$ be a 2-connected compact spin manifold of dimension $n=9$ or 
$n=10$ with $\al(M)=0$. Then
\[
\si(M) \geq 
\begin{cases}
\min\{\La_{9,1},\La_{9,2},\La_{9,3},\La_{9,4},\La_{9,5},s_1\} > 109.2 
&\text{for } n=9 , \\
\min\{\La_{10,1},\La_{10,2},\La_{10,3},\La_{10,4},\La_{10,5},\La_{10,6},s_2\} 
\geq 97.3 
&\text{for } n=10 .
\end{cases}
\]
\end{corollary}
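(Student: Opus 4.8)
The plan is to combine Lemma~\ref{lemma9.10} with Theorem~\ref{thm_surg} in essentially the same way the proof of Corollary~\ref{cor.dim.5} combines Proposition~\ref{prop.spin} with the surgery bound. By Lemma~\ref{lemma9.10}, any $2$-connected compact spin manifold $M$ of dimension $n\in\{9,10\}$ with $\al(M)=0$ is obtained from one of the model manifolds $P$ (namely $S^9$ or $\mH P^2\times S^1$ for $n=9$; $S^{10}$ or $\mH P^2\times S^1\times S^1$ for $n=10$) by a finite sequence of surgeries of dimensions $k$ with $0\le k\le n-4$. Applying Theorem~\ref{thm_surg} repeatedly along this sequence, and using that each surgery has $k\le n-3$ so that $\sigma$ drops by at most passing to $\min(\cdot,\La_{n,k})$, we get
\[
\si(M)\ \geq\ \min\Bigl(\si(P),\ \min_{0\le k\le n-4}\La_{n,k}\Bigr).
\]
Since $\La_{n,0}=\si(S^n)$ is the largest of the $\La_{n,k}$, the $k=0$ term can be dropped from the minimum, and one is left with $\min\bigl(\si(P),\La_{n,1},\dots,\La_{n,n-4}\bigr)$. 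For $P=S^n$ one has $\si(S^n)=\mu(\mS^n)$, which exceeds all the relevant $\La_{n,k}$, so that case contributes nothing. For the remaining $P$ one has $\si(P)=s_1$ (resp.\ $s_2$) by definition, giving exactly the two minima in the statement.

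It then remains to produce the numerical bounds. The values $\La_{9,1}>109.2$ and $\La_{10,1}>102.6$ come from Example~\ref{ex.num.2} and Example~\ref{ex.num.3} respectively, via the input $\mu_0/\mu_1$ of Petean--Ruiz and the numerical evaluation of~\eqref{est.general}. The intermediate constants $\La_{9,k}$ and $\La_{10,k}$ for $2\le k\le n-4$ fall under Case~\eqref{casetwo} or Case~\eqref{casethree} (and Case~\eqref{casefour} for $w=2$), so each is bounded below by the corresponding entry obtained from Corollary~\ref{cor52}, Corollary~\ref{cor54}, or the $v>w$ refinement; one checks case by case that all of these exceed $109.2$ (resp.\ $97.3$), so the codimension-$2$ surgery, i.e.\ the case $(n,k)=(9,1)$ or $(10,1)$, is the binding constraint. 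Finally the terms $s_1=\si(\mH P^2\times S^1)$ and $s_2=\si(\mH P^2\times S^1\times S^1)$ must be estimated from below; here one would use a lower bound for $\si(\mH P^2)$ together with a product estimate (for instance $\si$ of a product with a circle is controlled by $\mu$ of the product with $\mR$, and this in turn by the known positive scalar curvature of $\mH P^2$), or cite the relevant computation from \cite{ammann.dahl.humbert:p11b}, to see $s_1>109.2$ and $s_2\ge 97.3$.

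The routine part is the bookkeeping: assembling the list of $\La_{n,k}$ for $2\le k\le n-4$ and verifying numerically that none of them, nor $s_1$ (resp.\ $s_2$), falls below the claimed threshold. The genuine obstacle is controlling $s_1$ and $s_2$: unlike the sphere case, $\mH P^2$ is a genuinely nontrivial input, and one needs either an explicit positive lower bound for its Yamabe invariant that survives taking products with one or two circles, or a reference providing it. I expect this step — pinning down $\si(\mH P^2\times S^1)>109.2$ and $\si(\mH P^2\times S^1\times S^1)\ge 97.3$ — to be where the real work lies, since everything else is an application of Theorem~\ref{thm_surg}, Lemma~\ref{lemma9.10}, and the already-tabulated analytic and numeric bounds on $\La_{n,k}$.
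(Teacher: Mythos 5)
Your structural reduction is the same as the paper's: Lemma~\ref{lemma9.10} plus the surgery theorem gives $\si(M)\geq\min\{\La_{n,1},\dots,\La_{n,n-4},\si(P)\}$, and $\La_{9,1}>109.2$, $\La_{10,1}>102.6$ come from Examples~\ref{ex.num.2} and~\ref{ex.num.3}. The genuine gap is exactly the point you flag and then leave open: the quantitative lower bounds $s_1>109.2$ and $s_2\geq 97.3$. These do not follow from ``positive scalar curvature of $\mH P^2$ plus a product estimate,'' nor are they found in \cite{ammann.dahl.humbert:p11b}. The paper's argument is: by \cite[Theorem~1.1]{akutagawa.florit.petean:07} one has $s_k\geq\mu(\mH P^2\times\mR^k)$; then $\mu(\mH P^2\times\mR)\geq 0.9370\,\mu(\mS^9)=138.57\ldots$ follows from Petean's isoperimetric result \cite[Theorem~1.2]{petean:09a} combined with the explicit volume ratio $(V/V_8)^{2/9}$ computed in Appendix~\ref{app.hpn}, and $\mu(\mH P^2\times\mR^2)\geq 0.59\,\mu(\mS^{10})>97.3$ is quoted from \cite{petean.ruiz:13}. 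Without these inputs the stated numbers are not proved; note also that for $n=10$ the binding constraint is $s_2$ (giving $97.3$), not $\La_{10,1}>102.6$ as you assert, which is a symptom of not having pinned $s_2$ down.

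A secondary divergence: for the intermediate constants $\La_{n,k}$, $2\leq k\leq n-4$, you propose to run them through Corollaries~\ref{cor52}/\ref{cor54} with Case~(\ref{casethree}) input, ``checking case by case.'' The paper instead invokes the product formula \cite[Corollary~3.3]{ammann.dahl.humbert:13}, which bounds $\mu_\ci$ uniformly in $\ci$ when $v\geq 3$ and $w\geq 3$ and directly yields $\min\{\La_{9,2},\dots,\La_{9,5}\}>109.4$ and $\min\{\La_{10,2},\dots,\La_{10,6}\}>126.4$. The paper even remarks that the interpolation of Theorem~\ref{theo.concl} does not improve on that uniform bound in Case~(\ref{casethree}), so your route is both weaker and unverified; it might still clear the thresholds, but you would have to carry out the computations, whereas the citation settles it at once.
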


\begin{proof}
Lemma~\ref{lemma9.10} implies 
\[
\si(M) \geq \min\{\La_{9,1},\La_{9,2},\La_{9,3},\La_{9,4},\La_{9,5},s_1\}
\] 
if $n=9$ and 
\[
\si(M) \geq 
\min\{\La_{10,1},\La_{10,2},\La_{10,3},\La_{10,4},\La_{10,5},\La_{10,6},s_2\}
\]
if $n=10$. The relations $\La_{9,1} > 109.2$ and $\La_{10,1} > 102.6$ 
follow from Examples~\ref{ex.num.2} and~\ref{ex.num.3}. The relations 
\[
\min\{\La_{9,2},\La_{9,3},\La_{9,4},\La_{9,5}\} > 109.4 > 109.2
\]
and 
\[
\min\{\La_{10,2},\La_{10,3},\La_{10,4},\La_{10,5},\La_{10,6}\} > 126.4 > 102.6
\]
follow from the product formula, see 
\cite[Corollary~3.3]{ammann.dahl.humbert:13}. From 
\cite[Theorem~1.1]{akutagawa.florit.petean:07} it follows that 
$s_k \geq \mu(\mH P^2\times \mR^k)$. To estimate $s_1$ for $n=9$ we apply 
results of \cite{petean:09a}. The quantities $V$ and $V_8$ in that paper 
satisfy
\[
(\frac{V}{V_8})^{2/9} = 0.9370...,
\]
see Appendix~\ref{app.hpn}. Thus, \cite[Theorem~1.2]{petean:09a}
tells us that
\[
s_1 \geq \mu(\mH P^2 \times \mR) 
\geq 0.9370 \mu(\mS^9) = 138.57... >109.2.
\]
An estimate for $s_2$ when $n=10$ is provided by 
\cite[Example after Theorem~1.7]{petean.ruiz:13}, namely 
\[
s_2 \geq \mu(\mH P^2 \times \mR^2) 
\geq 0.59 \mu(\mS^{10}) > 97.3 < \La_{10,1}.
\]
\end{proof}

In the case that $\al(M) \neq 0$ for $2$-connected $M$ it was shown in 
\cite[Theorem 1]{petean:03} that $\si(M)=0$.

In dimensions $n\leq 6$, $n\neq 4$, there are only a few $2$-connected 
compact manifolds, namely $S^3$, $S^5$, $S^6$, and connected sums of 
$S^3\times S^3$, all with their standard smooth structures. The conformal 
Yamabe constant for the product metric on $S^3\times S^3$,
\[
\mu(S^3\times S^3, \rho^3+\rho^3)=12 (2\pi^2)^{2/3} = 87.64646..., 
\]
follows from Obata's theorem \cite[Proposition~6.2]{obata:71.72}.
Using Theorem C or more precisely the third conclusion 
in the following unnumbered 
corollary of \cite{boehm.wang.ziller:04} 
we find
\[
\si(S^3\times S^3) > 12 (2\pi^2)^{2/3} = 87.64646...
\]

In all dimensions $\neq 4$ we thus obtain lower bounds for the smooth
Yamabe invariant. In dimensions $n=7$, $n=8$, and $n\geq 11$ an explicit 
lower bound for the smooth Yamabe invariant of $2$-connected compact 
manifolds with vanishing index was obtained in Corollaries~6.6, 6.7 
and Proposition~6.9 of \cite[Corollary~6.6]{ammann.dahl.humbert:p11b}.
Summarizing we have the following proposition.
\begin{proposition}\label{prop.2c.lowbound}
Let $M$ is a $2$-connected compact manifold of dimension $n\neq 4$. 
If $\alpha(M)\neq 0$, then $\si(M)=0$. If $\alpha(M)=0$, then 
$\si(M) \geq t_n$, where $t_n$ is an explicit positive number only 
depending on $n$.
\end{proposition}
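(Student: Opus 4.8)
The plan is to treat Proposition~\ref{prop.2c.lowbound} as a bookkeeping statement that assembles results already available, proceeding dimension by dimension. First I would dispose of the case $\al(M)\neq 0$: here \cite[Theorem~1]{petean:03} asserts that $\si(M)=0$ for every $2$-connected compact spin manifold with non-vanishing alpha-genus, so in that case there is nothing further to prove and $t_n$ may be chosen freely. One should note in passing that $\al(M)$ can only be nonzero when $M$ is spin, and that a $2$-connected manifold of dimension $n\geq 5$ is automatically spin because $w_2(M)\in H^2(M;\mZ/2\mZ)=0$; in dimension $3$ every closed orientable manifold is spin, while dimension $4$ is excluded from the statement.

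Assuming from now on that $\al(M)=0$, I would split into dimension ranges. For $n\in\{3,5,6\}$ the first step is the classification recalled in the paragraph above the proposition: a $2$-connected closed $3$-manifold is diffeomorphic to $S^3$ by the Poincar\'e conjecture; a $2$-connected closed $5$-manifold is diffeomorphic to $S^5$, since $2$-connectedness together with Poincar\'e duality and the Hurewicz theorem forces it to be a homotopy sphere, hence the standard $S^5$ by the work of Smale; and a $2$-connected closed $6$-manifold is diffeomorphic to a connected sum of copies of $S^3\times S^3$ (with $S^6$ corresponding to the empty connected sum) by the classification of Smale and Wall, again carrying only the standard smooth structure. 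For each of these finitely many manifolds an explicit positive lower bound is already at hand: $\si(S^3)=\mu(\mS^3)$ by Obata's theorem; $\si(S^5)>45.1$ by Corollary~\ref{cor.dim.5}; $\si(S^6)>49.9$ by Corollary~\ref{cor.dim.6}; and for a connected sum of copies of $S^3\times S^3$ one has $\si\geq \si(S^3\times S^3)>12(2\pi^2)^{2/3}$, using \cite[Theorem~2]{kobayashi:87} to reduce to a single summand and then the Obata and B\"ohm--Wang--Ziller inputs for that summand exactly as in the paragraph preceding the proposition.

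For the remaining dimensions I would simply invoke the existing results together with the corollary just proved. The cases $n=7$, $n=8$, and $n\geq 11$ are covered by Corollaries~6.6 and~6.7 and Proposition~6.9 of \cite{ammann.dahl.humbert:p11b}, which already furnish an explicit positive constant $t_n$ in each of these dimensions. The cases $n=9$ and $n=10$ are precisely the content of the corollary immediately above, giving $\si(M)>109.2$ and $\si(M)\geq 97.3$ respectively. Collecting the finitely many explicit constants arising across all dimensions $n\neq 4$ and letting $t_n$ be the one attached to dimension $n$ completes the argument. There is no genuine analytic obstacle here — all the delicate estimates were carried out in the earlier sections and in \cite{ammann.dahl.humbert:p11b} — the only point requiring a little care being the low-dimensional classification step: checking that every $2$-connected compact manifold in dimensions $3$, $5$, and $6$ has been accounted for and carries only its standard smooth structure, and, relatedly, recording why dimension $4$ has to be excluded.
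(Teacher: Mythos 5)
Your argument is correct and is essentially the paper's own proof: Petean's theorem \cite[Theorem~1]{petean:03} for $\al(M)\neq 0$, the classification of $2$-connected compact manifolds in dimensions $3$, $5$, $6$ (only $S^3$, $S^5$, $S^6$ and $\#_k(S^3\times S^3)$ with standard smooth structures), the corollary for $n=9,10$, and Corollaries~6.6, 6.7 and Proposition~6.9 of \cite{ammann.dahl.humbert:p11b} for $n=7,8$ and $n\geq 11$, with the $2$-connected $\Rightarrow$ spin observation justifying the use of $\al(M)$ throughout. The one small deviation is in dimensions $5$ and $6$: since the manifolds there are already identified as the standard $S^5$, $S^6$ and $\#_k(S^3\times S^3)$, the paper uses $\si(S^5)=\mu(\mS^5)$, $\si(S^6)=\mu(\mS^6)$ and $\si(\#_k(S^3\times S^3))\geq\si(S^3\times S^3)>12(2\pi^2)^{2/3}$ directly, giving $t_5=78.9$ and $t_6=87.6$ as in Table~\ref{fig.tn}, whereas your appeal to Corollaries~\ref{cor.dim.5} and~\ref{cor.dim.6} yields the weaker (though still explicit and positive, hence sufficient for the statement) constants $45.1$ and $49.9$.
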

Some values of $t_n$ are collected in Table~\ref{fig.tn}.
\begin{figure}
\begin{tabular}{|r||l|l|l|l|l|l|l|l|l|}
\hline
$n=$ & \phantom{8}3 &  \phantom{8}4 & \phantom{8}5 & \phantom{8}6 &  \phantom{8}7 &  \phantom{8}8  &  \phantom{8}9 &  \phantom{8}10 &  \phantom{8}11 \\
\hline
$\si(M)\geq t_n=$ & 43.8 & \phantom{8}? & 78.9 & 87.6 & \phantom{8}74.5 & \phantom{8}92.2 & 109.2 & \phantom{8}97.3 & 135.9 \\
\hline
$\si(S^n)=$ & 43.8 &  61.5 & 78.9 & 96.2 & 113.5 & 130.7 & 147.8 & 165.0 & 182.1\\
\hline
\end{tabular}
\caption{Lower estimates for the smooth Yamabe invariant of $2$-connected 
manifolds with vanishing index. Values of $\si(S^n)$, rounded down, for 
comparison}
\label{fig.tn} 
\end{figure}
 
The situation for $n=4$ is still unclear as it is unknown whether exotic 
$4$-spheres, i.e. manifolds homeomorphic but not diffeomorphic to $S^4$, 
do exist. The smooth Poincar\'e conjecture in dimension $4$ claims that 
exotic $4$-spheres do not exist. This would imply that $S^4$ is the only 
$2$-connected $4$-manifold and thus $t_4=\si(S^4)$.

%%%%%%%%%%%%%%%%%%%%%%%%%%%%%%%%%%%%%%%%%%%%%%%%%%%%%%%%%%%%%%%%%%%%%%%%%
\appendix
%%%%%%%%%%%%%%%%%%%%%%%%%%%%%%%%%%%%%%%%%%%%%%%%%%%%%%%%%%%%%%%%%%%%%%%%%

%%%%%%%%%%%%%%%%%%%%%%%%%%%%%%%%%%%%%%%%%%%%%%%%%%%%%%%%%%%%%%%%%%%%%%%%%
\section{Optimal values of $\la$ and $\tau$}
\label{app.opti}
%%%%%%%%%%%%%%%%%%%%%%%%%%%%%%%%%%%%%%%%%%%%%%%%%%%%%%%%%%%%%%%%%%%%%%%%%

We now optimize $\la$ and $\tau$ for the inequality 
\eqref{estimate-la-tau}. We define the convex polygon $P_\ci$ of 
admissible pairs $(\la,\tau)$ as
\[
P_\ci
\definedas
\{(\la,\tau) \mid \mbox{satisfying \eqref{cond1}, \eqref{cond2},
$\la\geq 0$, $\tau\geq 0$}\}.
\] 
For $\la=1$, $\tau=0$, one has $\la \ci^2 s_1 + \tau s_0 < s_\ci$ so 
$(1,0)$ is a corner of $P_\ci$. Similarly one sees that $(0,1)$ is never 
a corner of $P_\ci$ unless $\ci=0$. Because of $\ci^2 s_1/s_0<1$, the 
equations $\la+\tau=1$ and $\la \ci^2 s_1 + \tau s_0 = s_\ci$ have a 
common solution $(\la_0,\tau_0)$ with $\la_0\in (0,1)$ for 
$\ci\in (0,1)$. From
\[
\frac{\ci^{2w/n}\mu_1}{\mu_0}
\geq 
\ci^{2w/n}>\frac{\ci^2 s_1}{s_0}
\]
one easily sees that the optimal estimate is obtained in the point 
$(1,0)$ for $\ci^{2w/n}\geq \mu_0/\mu_1$, and in the point $(\la_0,\tau_0)$ 
for $\ci^{2w/n}\leq \mu_0/\mu_1$.

Next we compute $\la_0$.
\[
- \la_0 \ci^2 v(v-1) + \la_0 \ci^2 w(w-1) + (1-\la_0)w(w-1)
\leq 
- \ci^2 v (v-1)
\]
Factoring out, removing $w(w-1)$ on both sides, then dividing 
by $\la_0\ci^2 w(w-1)$ one obtains the equivalent equation
\[
-\frac{v(v-1)}{w(w-1)}+ 1 - \frac1{\ci^2} \leq - \frac{1}{\la_0} 
\frac{v(v-1)}{w(w-1)} ,
\]
which is further equivalent to 
\[
\left(1-\frac1{\ci^2}\right)
\leq
\left(1-\frac1{\la_0}\right)\frac{v(v-1)}{w(w-1)} .
\]
This yields \eqref{la0-eq}.

%%%%%%%%%%%%%%%%%%%%%%%%%%%%%%%%%%%%%%%%%%%%%%%%%%%%%%%%%%
\section{The constant $\La_{6,3}$} \label{nk=63}
%%%%%%%%%%%%%%%%%%%%%%%%%%%%%%%%%%%%%%%%%%%%%%%%%%%%%%%%%%%

All explicitly known positive lower bounds for $\La_{n,k}$  are obtained 
in the following way: at first, we show that 
$\La_{n,k}^{(2)} \geq \La_{n,k}^{(1)}$ and then, we apply 
Theorem \ref{theo.concl} or the estimates obtained in 
\cite{ammann.dahl.humbert:13}. Recall that by 
definition $\La_{n,k} = \min (\La^{(1)}_{n,k},\La^{(2)}_{n,k})$. 
For $0 \leq k \leq n-2$ or $(n,k) \in \{ (4,1),(5,2) \}$, the 
inequality  $\La_{n,k}^{(2)} \geq \La_{n,k}^{(1)}$ is a direct consequence 
of the main result in  \cite{ammann.dahl.humbert:p11b}. 
For $(n,k) = (6,3)$, this result does not apply directly, but a modified 
version which will be presented in this appendix
still allows to conclude $\La_{n,k}^{(2)} \geq \La_{n,k}^{(1)}$. 

\begin{prop} \label{prop_nk=63}
We have $\La^{(2)}_{6,3} \geq \La^{(1)}_{6,3}$ and hence $\La_{6,3} = \La^{(1)}_{6,3}$.
\end{prop}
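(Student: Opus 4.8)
The plan is to reduce the statement $\La^{(2)}_{6,3}\ge\La^{(1)}_{6,3}$ to a statement about the model spaces $\mM_\ci$ with $(v,w)=(4,2)$, splitting the parameter range $\ci\in[0,1]$ into $\ci\in[0,1)$ and the endpoint $\ci=1$, exactly as outlined in Section~\ref{sec.strategy}. Recall $\La^{(2)}_{6,3}=\inf_{\ci\in[0,1]}\mu^{(2)}(\mM_\ci)$. For $\ci\in[0,1)$, the main result of \cite{ammann.dahl.humbert:p11b}, while not literally covering the boundary-dimension case $n=k+3$, does apply for $\ci$ strictly less than $1$: its proof compares a solution $u$ of the Yamabe equation on $\mM_\ci$ satisfying the lower bound $\mu(u)\|u\|^{p_n-2}_{L^\infty}\ge \frac{(n-k-2)^2(n-1)}{8(n-2)}$ with a test function that controls the $L^2$-mass near infinity using the strictly negative curvature of the $\mH^4_\ci$ factor; this yields $\mu^{(2)}(\mM_\ci)\ge\mu^{(1)}(\mM_\ci)\ge\mu_\ci\ge\La^{(1)}_{6,3}$. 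So the first step is to quote \cite{ammann.dahl.humbert:p11b} to get
\[
\inf_{\ci\in[0,1)}\mu^{(2)}(\mM_\ci)\ge\La^{(1)}_{6,3}.
\]

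The second and genuinely new step is the endpoint $\ci=1$. Here I would use the conformal equivalence: $\mM_1=\mH^4_1\times\mS^2$ with the product metric is conformally equivalent to $\mS^6\setminus\mS^3$, the round $6$-sphere with a totally geodesic $3$-sphere removed. Under this conformal change the Yamabe equation transforms covariantly, so any $u\in\Omega^{(2)}(\mM_1)$ corresponds to a function $\widetilde u$ on $\mS^6\setminus\mS^3$ solving the Yamabe equation for the round metric with the same $\mu$-value and the same $L^{p_n}$-norm. The point is then to show that such a $\widetilde u$, because it lies in $L^\infty$ and has finite $L^{p_n}$-mass and satisfies the equation on the complement of a codimension-$3$ submanifold, extends to a (weak, hence smooth) solution on all of $\mS^6$ — the removed $\mS^3$ has codimension $3\ge2$, so it is removable for $H^{1,2}$ (or for $L^{p_n}$) solutions of this subcritical-in-capacity equation; one uses a standard logarithmic cutoff argument near $\mS^3$. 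Consequently $\mu(u)\ge\mu(\mS^6)$, giving $\mu^{(2)}(\mM_1)\ge\mu(\mS^6)$. Finally, since $\La^{(1)}_{6,3}=\inf_{\ci\in[0,1]}\mu^{(1)}(\mM_\ci)\le\mu^{(1)}(\mM_0)\le\mu_0$ and the estimates of Section~4 (e.g.\ \eqref{ineq.gamma42}) show $\La^{(1)}_{6,3}\le\mu(\mS^6)$, we conclude $\mu^{(2)}(\mM_1)\ge\mu(\mS^6)\ge\La^{(1)}_{6,3}$.

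Combining the two steps gives $\La^{(2)}_{6,3}=\inf_{\ci\in[0,1]}\mu^{(2)}(\mM_\ci)\ge\La^{(1)}_{6,3}$; together with the trivial inequality $\La_{6,3}=\min\{\La^{(1)}_{6,3},\La^{(2)}_{6,3}\}$ this yields $\La_{6,3}=\La^{(1)}_{6,3}$, which is the assertion.

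The step I expect to be the main obstacle is the removability argument for the endpoint $\ci=1$: one must verify carefully that a bounded $L^{p_n}$-solution of the round Yamabe equation on $\mS^6\setminus\mS^3$ is genuinely a solution across $\mS^3$ — the subtlety is that the $L^\infty$-bound on $u$ does not immediately give an $H^{1,2}$-bound across the singular set, so one needs the Yamabe equation itself together with the capacity of $\mS^3$ in $\mS^6$ (codimension $\ge 2$) to bootstrap. A clean way is to test the equation against $u\chi_\ep^2$ where $\chi_\ep$ is the standard logarithmic cutoff vanishing on an $\ep$-tube around $\mS^3$, absorb the gradient term, and let $\ep\to0$; this shows $u\in H^{1,2}(\mS^6)$ and that the distributional equation holds on all of $\mS^6$, after which elliptic regularity finishes. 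Everything else is bookkeeping with the conformal transformation law and the definitions of $\La^{(i)}_{n,k}$.
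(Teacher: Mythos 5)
Your overall architecture coincides with the paper's: treat $\ci\in[0,1)$ by quoting \cite{ammann.dahl.humbert:p11b}, and at the endpoint $\ci=1$ transfer a solution $u\in\Omega^{(2)}(\mM_1)$ to a function $v$ on $\mS^6\setminus\mS^3$ via the conformal diffeomorphism, remove the singularity along the totally geodesic $\mS^3$, and conclude $\mu(u)\ge\mu(\mS^6)\ge\La^{(1)}_{6,3}$. However, the step you yourself flag as the main obstacle contains a genuine gap. First, the transferred function is $v=f\,\Theta^*u$ with $\Theta^*G_1=f\rho^6$, and the conformal factor $f$ blows up along the removed $\mS^3$ (the complete end of $\mM_1$ gets compressed into a finite-distance neighborhood of $\mS^3$); so $v$ is \emph{not} in $L^\infty$ in general — the only information that survives the transfer is $\|v\|_{L^3(\mS^6\setminus\mS^3)}=\|u\|_{L^3(\mM_1)}\le 1$. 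Second, with only this $L^3$ bound your proposed Caccioppoli-type argument (testing with $v\chi_\ep^2$ and absorbing) does not close: the error term $\int v^2|\nabla\chi_\ep|^2\,dv$ would require, via H\"older, control of $\int|\nabla\chi_\ep|^6\,dv$, essentially the $6$-capacity of $\mS^3$ in $\mS^6$, which does not vanish; for the standard linear cutoff the term is of order $\ep^{-1}\|v\|^2_{L^3(W_{2\ep})}$, which need not tend to zero, and the logarithmic-cutoff heuristic is calibrated to codimension $2$, not to this situation. The paper's proof avoids establishing $v\in H^{1,2}$ altogether: it verifies the distributional equation directly against a smooth test function $\phi$, splitting $\phi=\eta_\ep\phi+(1-\eta_\ep)\phi$ with $|\nabla^2\eta_\ep|\le c\ep^{-2}$. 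The error term $\int v\,L(\eta_\ep\phi)\,dv$ is \emph{linear} in $v$, so H\"older gives the bound $C\ep^{-2}\|v\|_{L^3(W_{2\ep})}\vol(W_{2\ep})^{2/3}\le C'\|v\|_{L^3(W_{2\ep})}\to0$, precisely because $\vol(W_{2\ep})\lesssim\ep^3$ in codimension $3$; only afterwards do elliptic regularity and testing the Yamabe functional of $\mS^6$ with $v$ (using $\|v\|_{L^3}\le1$) yield $\mu\ge\mu(\mS^6)$. Your removability step would have to be replaced by this (or an equivalent) argument.

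A minor further point: your justification of $\La^{(1)}_{6,3}\le\mu(\mS^6)$ via $\mu^{(1)}(\mM_0)\le\mu_0$ is the reverse of the inequality established in Section~\ref{conf_yam}, which gives $\mu^{(1)}(\mM_\ci)\ge\mu_\ci$; the paper simply asserts $\mu(\mS^6)\ge\La^{(1)}_{6,3}$ at the end of the proof.
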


\begin{proof} 
The main result in \cite{ammann.dahl.humbert:p11b} implies that 
\[ 
\inf_{c \in [0,1)} \mu^{(2)} (\mM_\ci ) \geq \La^{(1)}_{6,3}
\]
and as a consequence, $\La^{(2)}_{6,3} \geq \min(\La^{(1)}_{6,3}, \mu^{(2)}(\mM_1))$. 
We now estimate $\mu^{(2)}(\mM_1)$. 
If we spell out the definition of
 $\mu^{(2)}(\mM_1)$ recalled in Section~\ref{sec.notation}, and using
%$\scal^{G_1} = -10$, 
$a_6=5$, and $p_5=3$, we see that $\mu^{(2)}(\mM_1)$
is the infimum of all $\mu\in \mR$ for which there is a solution of 
\begin{equation} \label{eq.conf1} 
5 \Delta^{G_1} u + s^{G_1} u = \mu u^{2}
\end{equation}
satisfying
\begin{itemize}
\item $u \not \equiv 0$,
\item $\|u\|_{L^{3}(\mM_1)} \leq 1$,
\item $u \in L^{\infty}(\mM_1)$,
\item $\mu(u) \|u\|_{L^{\infty}( \mM_1)} \geq
\frac{5}{32}$. 
\end{itemize}

We prove in \cite{ammann.dahl.humbert:13b} that there is a conformal 
diffeomorphism $\Theta: \mS^6 \setminus \mS^3\to  \mM_1$ where $\mS^3$ 
denotes a totally geodesic $3$-sphere in the standard sphere $\mS^6$. 
Let $f\in C^\infty(\mS^6 \setminus \mS^3)$, $f>0$, be the conformal factor 
of $\Theta$, i.e.\ $\Theta^*G_1 = f \rho^6$. We define 
$v \definedas f \Theta^*u$. By conformal covariance of the Yamabe 
operator and since the scalar curvature of $\mS^6$ is $30$, we get 
from \eqref{eq.conf1} that the function $v$ is a solution of
\begin{equation} \label{eq.S6}
5 \Delta^{\rho^6} v +30 v = \mu v^{2}
\end{equation} 
on $\mS^6 \setminus \mS^3$. Moreover, one checks that 
\[
\|v\|_{L^{3}( \mS^6 \setminus \mS^3)}  = \|u\|_{L^{3}(\mM_1)}
\]
and hence, $v \in L^3(\mS^6)$ and 
\begin{equation} \label{vl6} 
\|v\|_{L^{3}( \mS^6)} \leq 1. 
\end{equation}
We now use a standard argument to show that the function $v$ can be 
extended to a smooth solution of equation \eqref{eq.S6} on all $S^6$. 
In other words, we remove the singularity at $\mS^3$. Let us choose a 
smooth function $\phi$ on $\mS^6$. We are going to show that 

\begin{equation} \label{vlphi}
\int_{S^6} v (L \phi)-\mu v^2 \phi\, dv =0
\end{equation}
where, to simplify notations, we set $L \definedas 5 \Delta^{\rho_6} + 30$ 
and where $dv \definedas dv^{\rho^6}$.
 
For all $a \geq 0$, let us denote by $W_a$ the set of points of $S^6$ 
whose distance to the removed $\mS^3$ is smaller than $a$. For this goal, 
consider for $\ep\in(0,\frac12)$  a cut-off function $\eta_\ep$ such that 
\begin{enumerate}
\item $0 \leq \eta_\ep\leq 1$;
\item $\eta_{\ep} (\mS^6 \setminus W_{2 \ep}) = \{0\}$;
\item $\eta_\ep(W_\ep) = \{ 1 \}$; 
\item $|\nabla \eta_\ep | \leq 2/ \ep$.
\item $|\nabla^2 \eta_\ep | \leq c/ \ep^2$.
\end{enumerate}

We then write, for $\ep>0$ small

\begin{equation} \label{vlphi2}
\int_{S^6} v( L \phi)\,dv 
= \int_{S^6} v L  (\eta_\ep \phi + (1-\eta_\ep) \phi) \,dv.
\end{equation} 

Since $v$ satisfies Equation \ref{eq.S6} and since the function 
$1-\eta_\ep$ is compactly supported in $\mS^6 \setminus \mS^3$,  we have 
\[\begin{split} 
\int_{S^6} v L  ((1-\eta_\ep) \phi) \,dv 
&= 
\int_{\mS^6} (Lv) (1-\eta_\ep) \phi) \,dv\\
&=  
\int_{\mS^6} \mu v^2  (1-\eta_\ep) \phi) \,dv. 
\end{split}\]

Since $1-\eta_{\ep}$ is bounded and tends to $1$ almost everywhere, 
Lebesgue's theorem implies 

\begin{eqnarray} \label{vlphi3} 
 \lim_{\ep \to 0} v L ((1-\eta_\ep) \phi) \,dv = \int_{\mS^6} \mu v^2 \phi \,dv.
\end{eqnarray} 

Now, we use the fact that there exists some $C>0$ independent of $\ep$, 
but depending on $\phi$, such that 
\[
L(\eta_\ep \phi) \leq C (\frac{\chi_\ep }{\ep^2} + \eta_\ep) 
\]
where $\chi_\ep$ is the characteristic function of the set 
$W_{2 \ep} \setminus W_\ep$. 

Then, using H\"older inequality and the fact that $\eta_\ep$ is compactly 
supported in $W_{2\ep}$ and bounded by $1$ on this set,   
\[ \begin{split} 
\int_{S^6} v L  (\eta_\ep \phi)\, dv 
&\leq 
C\left( \frac{1}{\ep^2}  \int_{W_{2\ep}} v\, dv 
+ \int_{\mS^6} v \eta_{\ep} \,dv \right) \\
&\leq  
C \left( \frac{1}{\ep^2} \left(\int_{W_{2\ep}} v^3  \,dv\right)^{1/3} 
\vol(W_{2 \ep})^{2/3}  
+ \left(  \int_{W_{2\ep}} v^3  \,dv\right)^{1/3} \vol(W_{2 \ep})^{2/3}\right) \\
&\leq 
C  \frac{1}{\ep^2} \left( \int_{W_{2\ep} } v^3  \,dv\right)^{1/3} 
\vol(W_{2 \ep})^{2/3}.
\end{split} \]

Since $W_{2\ep}$ is a $2\ep$-neighborhood of $\mS^3$, 
$ \vol(W_{2 \ep}) \leq C \ep^3$. Moreover, since $v \in L^3(\mS^6)$, 
\[
\lim_{\ep \to 0}  \int_{W_{2\ep} } v^3  \,dv =0.
\]
We then obtain that %\eqref{vlphi}-
\[
\lim_{\ep \to 0} \int_{S^6} v L  (\eta_\ep \phi) \,dv=0.
\]
Together with \eqref{vlphi3} and \eqref{vlphi2}, we obtain \eqref{vlphi} 
which means that in the sense of distributions,
equation \eqref{eq.S6} is satisfied on all of $\mS^6$.
By standard elliptic theory, $v$ is $C^2$ (and even smooth 
outside its zero set). Using $v$ as a test function in the Yamabe 
function of $ \mS^6$, we get from  \eqref{eq.S6} and \eqref{vl6} that 
$\mu \geq \mu(\mS^6) \geq \La^{(1)}_{6,3}$, which ends the proof. 
\end{proof}

%%%%%%%%%%%%%%%%%%%%%%%%%%%%%%%%%%%%%%%%%%%%%%%%%%%%%%%%%%%%%%%%%%%%%%%%%
\section{The Wu manifold $\SU(3)/\SO(3)$}
\label{app.wu.mfd}
%%%%%%%%%%%%%%%%%%%%%%%%%%%%%%%%%%%%%%%%%%%%%%%%%%%%%%%%%%%%%%%%%%%%%%%%%

We equip $\SU(3)$ with the bi-invariant metric such that the matrix 
\[
\begin{pmatrix}
0 & -1 & 0\\
1 & 0 & 0\\ 
0 & 0 & 0
\end{pmatrix}
\in \mathfrak{su}(3)
\]
has length $1$. Then $(\SU(3),\SO(3))$ is a symmetric pair, and the 
associated involution of $\mathfrak{su}(3)$ is complex conjugation. 
Let $M$ be $\SU(3)/\SO(3)$ equipped with the quotient metric $g$. 
The manifold $M$ is orientable, but not spin. Complex conjugation of 
$\SU(3)$ induces an orientation reversing isometry of $M$. Thus 
$M \amalg M$ is (up to orientation-preserving diffeomorphisms) the 
oriented boundary of $M\times [0,1]$. It follows that $M\# M$ is an 
oriented boundary as well.

An elementary calculation on the Lie algebra level shows that $g$ 
is an Einstein metric, $\Ric^g = 6g$. Obata's theorem 
\cite[Proposition~6.2]{obata:71.72} then tells us that $\mu(M,g) = 
30 \vol(M,g)^{2/n}$. The volume $\vol(M,g)$ is calculated in 
\cite{boya.et.al:03}, and we conclude the following Lemma.

\begin{lemma}
The conformal Yamabe constant of $\SU(3)/\SO(3)$ is 
\[
\mu(\SU(3)/\SO(3),g) 
= 30 \cdot \left(\frac{\sqrt 3}8 \pi^3\right)^{2/5}
= 64.252401...
\]
\end{lemma}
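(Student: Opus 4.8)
The plan is to identify $g$, up to scaling, with the constant scalar curvature metric realizing the Yamabe minimum in its own conformal class by means of Obata's theorem, to evaluate the Yamabe functional on constant functions, and then to insert the volume computed in \cite{boya.et.al:03}.

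First I would verify the Einstein condition. Since $(\SU(3),\SO(3))$ is an irreducible symmetric pair of compact type, every invariant metric on $M=\SU(3)/\SO(3)$ is automatically Einstein, and the constant is pinned down by the standard symmetric-space formula $\Ric^g(X,X)=\sum_i |[X,e_i]|^2$, valid for a unit vector $X$ in the orthogonal complement $\mathfrak{m}$ of $\mathfrak{so}(3)$ in $\mathfrak{su}(3)$ and an orthonormal basis $(e_i)$ of $\mathfrak{m}$, the inner product being the one fixed by the normalization on the displayed matrix. Carrying out this sum with the structure constants of $\mathfrak{su}(3)$ gives $\Ric^g=6g$, hence $\Scal^g\equiv 6n=30$; this is the ``elementary Lie-algebra calculation'' mentioned above.

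Next, $M$ is not conformally diffeomorphic to $(\mS^5,\rho^5)$: $M$ is not spin whereas $\mS^5$ is, so $M$ is not even homotopy equivalent to $\mS^5$. Hence Obata's theorem \cite{obata:71.72} applies, and every constant scalar curvature metric in the conformal class $[g]$ is a constant multiple of $g$. Since the Yamabe problem is solvable on any compact manifold, the infimum defining $\mu(M,g)=\mu(M,[g])$ is attained by a constant scalar curvature metric, which must therefore be homothetic to $g$; evaluating $\cF^g$ on a constant function and using $1-2/p_n=2/n$ gives
\[
\mu(M,g)=\Scal^g\,\vol(M,g)^{2/n}=30\,\vol(M,g)^{2/5}.
\]

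Finally, with $g$ normalized as above one has $\vol(M,g)=\frac{\sqrt 3}{8}\pi^3$ by the computation in \cite{boya.et.al:03}; the one point to check here is that the normalization used there is the same as ours (equivalently, the one that produces the Einstein constant $6$). Substituting yields $\mu(M,g)=30\bigl(\tfrac{\sqrt 3}{8}\pi^3\bigr)^{2/5}=64.252401\ldots$, as claimed. I expect the only genuine work to be the symmetric-space curvature computation fixing the Einstein constant, together with this normalization bookkeeping for the volume; the conformal-geometric ingredient, Obata's theorem, is entirely standard.
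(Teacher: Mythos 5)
Your proposal is correct and follows essentially the same route as the paper: an elementary Lie-algebra computation giving $\Ric^g=6g$, Obata's theorem identifying $g$ (up to scale) as the Yamabe minimizer in its conformal class so that $\mu(M,g)=30\,\vol(M,g)^{2/5}$, and the volume taken from \cite{boya.et.al:03}. The only additions are points the paper leaves implicit, namely ruling out conformal equivalence to the round sphere (via non-spinness) and the normalization check for the quoted volume, both of which are sound.
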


%%%%%%%%%%%%%%%%%%%%%%%%%%%%%%%%%%%%%%%%%%%%%%%%%%%%%%%%%%%%%%%%%%%%%%%%%
\section{Quaternionic projective spaces $\mH P^n$}
\label{app.hpn}
%%%%%%%%%%%%%%%%%%%%%%%%%%%%%%%%%%%%%%%%%%%%%%%%%%%%%%%%%%%%%%%%%%%%%%%%%

Let $g_n$ be the metric on $\mH P^n$ such that the Hopf map 
$\mS^{4n+3}\to \mH P^n$ is a Riemannian submersion. With O'Neill's 
formula one easily calculates that the scalar curvature of $g_n$ is 
$\scal^{g_n}=4n(4n+8)$, and the volume is 
$\vol(\mH P^n,g_n) = \omega_{4n+3}/\omega_3$.

As a consequence
\[
\ti g_n \definedas
\frac{\scal^{g_n}}{\scal^{\rho^{4n}}} g_n = \frac{4n(4n+8)}{4n (4n-1)}g_n
\]
is a metric whose scalar curvature is equal to $4n(4n-1)=\scal^{\rho^{4n}}$.
Its volume is 
\[
V_{4n} \definedas 
\vol(\mH P^n,\ti g_n) = 
\left(\frac{4n+8}{4n-1}\right)^{2n}\frac{\omega_{4n+3}}{\omega_3}.
\]
In the special case $n=2$ this yields 
$V_8 = 2^{13}\pi^4/(7^4 \cdot 5 \cdot 3)$ where we used 
$\om_{11} = \pi^6/60$ and $\om_3 = 2\pi^2$. 
Using $\om_8 = 32\pi^4/(7 \cdot 5 \cdot 3)$ we obtain 
$V_8/\om_8 = 2^8/7^3 = 0.74635569\ldots$. These numbers play a crucial 
role for the lower bounds of $\mu(\mH P^2\times \mR)$ and 
$\mu(\mH P^2\times \mR^2)$.

%%%%%%%%%%%%%%%%%%%%%%%%%%%%%%%%%%%%%%%%%%%%%%%%%%%%%%%%%%%%%%%%%%%%%%%%%%
%\bibliographystyle{amsplain}
%\bibliography{literatur}

\begin{thebibliography}{10}

\bibitem{akutagawa.florit.petean:07}
K.~Akutagawa, L.~Florit, and J.~Petean, \emph{On {Y}amabe constants of
  {R}iemannian products}, Comm. Anal. Geom. \textbf{15} (2007), 947--969.

\bibitem{ammann.dahl.humbert:p11b}
B.~Ammann, M.~Dahl, and E.~Humbert, \emph{Square-integrability of solutions of
  the {Y}amabe equation}, Preprint, 2011, \url{http://arxiv.org/abs/1111.2780}.

\bibitem{ammann.dahl.humbert:13}
\bysame, \emph{The conformal {Y}amabe constant of product manifolds}, {P}roc.
  {A}{M}{S} \textbf{141} (2013), 295--307.

\bibitem{ammann.dahl.humbert:13b}
\bysame, \emph{Smooth {Y}amabe invariant and surgery}, {J}. {D}iff. {G}eom.
  \textbf{94} (2013), 1--58.

\bibitem{anderson.brown.peterson:66}
D.~W. Anderson, E.~H. Brown, Jr., and F.~P. Peterson, \emph{Spin cobordism},
  Bull. Amer. Math. Soc. \textbf{72} (1966), 256--260. \MR{0190939 (32 \#8349)}

\bibitem{anderson.brown.peterson:67}
\bysame, \emph{The structure of the {S}pin cobordism ring}, Ann. of Math. (2)
  \textbf{86} (1967), 271--298. \MR{0219077 (36 \#2160)}

\bibitem{baernstein:94}
A.~Baernstein, \emph{A unified approach to symmetrization}, Partial
  differential equations of elliptic type ({C}ortona, 1992), Sympos. Math.,
  XXXV, Cambridge Univ. Press, Cambridge, 1994, pp.~47--91. \MR{1297773
  (96e:26019)}

\bibitem{boehm.wang.ziller:04}
C.~B{\"o}hm, M.~Wang, and W.~Ziller, \emph{A variational approach for compact
  homogeneous {E}instein manifolds}, Geom. Funct. Anal. \textbf{14} (2004),
  681--733.

\bibitem{boya.et.al:03}
L.~J. Boya, E.~C.~G. Sudarshan, and T.~Tilma, \emph{Volumes of compact
  manifolds}, Rep. Math. Phys. \textbf{52} (2003), no.~3, 401--422.

\bibitem{henry.petean:p13}
G.~Henry and J.~Petean, \emph{A note on {Y}amabe constants of products with
  hyperbolic spaces}, Preprint, 2013, \url{http://arxiv.org/abs/1302.1249}.

\bibitem{kobayashi:87}
O.~Kobayashi, \emph{Scalar curvature of a metric with unit volume}, Math. Ann.
  \textbf{279} (1987), no.~2, 253--265.

\bibitem{kosinski:93}
A.~A. Kosinski, \emph{Differential manifolds}, Pure and Applied Mathematics,
  vol. 138, Academic Press Inc., Boston, MA, 1993.

\bibitem{lawson.michelsohn:89}
H.~B. Lawson and M.-L. Michelsohn, \emph{Spin geometry}, Princeton University
  Press, Princeton, 1989.

\bibitem{obata:71.72}
M.~Obata, \emph{The conjectures on conformal transformations of {R}iemannian
  manifolds}, J. Diff. Geom. \textbf{6} (1971/72), 247--258.

\bibitem{petean:03}
J.~Petean, \emph{The {Y}amabe invariant of simply connected manifolds}, J.
  Reine Angew. Math. \textbf{523} (2000), 225--231.

\bibitem{petean:09a}
\bysame, \emph{Isoperimetric regions in spherical cones and {Y}amabe constants
  of {$M\times S^1$}}, Geom. Dedicata \textbf{143} (2009), 37--48.

\bibitem{petean.ruiz:11}
J.~Petean and J.~M. Ruiz, \emph{Isoperimetric profile comparisons and {Y}amabe
  constants}, Ann. Global Anal. Geom. \textbf{40} (2011), 177--189.

\bibitem{petean.ruiz:13}
\bysame, \emph{On the {Y}amabe constants of ${S}^2\times \mathbb{R}^3$ and
  ${S}^3\times \mathbb{R}^2$}, Diff. Geom. \& Appl. \textbf{31} (2013), 308 --
  319.

\bibitem{schoen:89}
R.~Schoen, \emph{{Variational theory for the total scalar curvature functional
  for Riemannian metrics and related topics}}, {Topics in calculus of
  variations, Lect. 2nd Sess., Montecatini/Italy 1987, Lect. Notes Math. 1365,
  120-154}, 1989.

\end{thebibliography}
%%%%%%%%%%%%%%%%%%%%%%%%%%%%%%%%%%%%%%%%%%%%%%%%%%%%%%%%%%%%%%%%%%%%%%%%%%

\providecommand{\bysame}{\leavevmode\hbox to3em{\hrulefill}\thinspace}
\providecommand{\MR}{\relax\ifhmode\unskip\space\fi MR }
% \MRhref is called by the amsart/book/proc definition of \MR.
\providecommand{\MRhref}[2]{%
  \href{http://www.ams.org/mathscinet-getitem?mr=#1}{#2}
}
\providecommand{\href}[2]{#2}

%%%%%%%%%%%%%%%%%%%%%%%%%%%%%%%%%%%%%%%%%%%%%%%%%%%%%%%%%%%%%%%%%%%%%%%%%
\end{document}